\documentclass{amsart}

\usepackage{fullpage,amsmath,amssymb,mathtools,enumerate,amsthm}
\usepackage{bm}
\usepackage{tikz, graphics}
\usepackage{caption,subcaption,float}
\captionsetup[subfigure]{labelformat=simple}



\newtheorem{thm}{Theorem}[section]
\newtheorem{prop}[thm]{Proposition}
\newtheorem{lem}[thm]{Lemma}
\newtheorem{cor}[thm]{Corollary}

\newtheorem{qst}[thm]{Question}

\theoremstyle{definition}

\theoremstyle{remark}
\newtheorem{rmk}[thm]{Remark}

\theoremstyle{definition}


\newcommand{\N}{\mathbb N}
\newcommand{\Z}{\mathbb Z}

\renewcommand{\P}{\mathbb P}

\renewcommand{\phi}{\varphi}

\renewcommand{\ker}{\mathop{\mathrm{Ker}}\nolimits}

\newcommand{\into}{\hookrightarrow}

\renewcommand{\bar}{\protect\overline}

\renewcommand{\v}{\vee}

\newcommand{\n}{\i{n}}
\renewcommand{\a}{\i{a}}

\newcommand{\depth}{\mathop{\mathrm{depth}}\nolimits}

\newcommand{\reg}{\mathop{\mathrm{reg}}\nolimits}

\newcommand{\pd}{\mathop{\mathrm{pd}}\nolimits}

\newcommand{\abs}[1]{\left| #1 \right|}

\newcommand{\mat}[1]{\begin{pmatrix} #1 \end{pmatrix}}









\newcounter{CaseCount}
\newcommand{\resetcase}{\setcounter{CaseCount}{0}}
\newcommand{\case}[1][\theCaseCount]{\stepcounter{CaseCount}\noindent\textbf{Case #1: }}

\title[$N_p$ for Bipartite Toric Edge Ideals]{Green-Lazarsfeld Condition for Toric Edge Ideals of Bipartite Graphs}

\author{Zachary Greif}
\email{zgreif@iastate.edu}
\author{Jason McCullough}
\email{jmccullo@iastate.edu}
\address{Iowa State University, Department of Mathematics, Ames, IA 50011}
\subjclass[2010]{Primary: 13D02 ; Secondary: 15A75}

\keywords{}

\begin{document}

\begin{abstract}  Previously, Ohsugi and Hibi gave a combinatorial description of bipartite graphs $G$ whose toric edge ideal $I_G$ is generated by quadrics, showing that every cycle of $G$ of length at least $6$ must have a chord.  This corresponds to the Green-Lazarsfeld condition $\mathbf{N}_1$.  In this paper, we investigate the higher syzygies of $I_G$ and give combinatorial descriptions of the Green-Lazarsfeld conditions $\mathbf{N}_p$ of toric edge ideals of bipartite graphs for all $p \ge 1$.  In particular, we show that $I_G$ is linearly presented (i.e. satisfies condition $\mathbf{N}_2$) if and only if the bipartite complement of $G$ is a tree of diameter at most $3$.  We also investigate the regularity of linearly presented toric edge ideals and give criteria for polyomino ideals to satisfy the Green-Lazarsfeld conditions.
\end{abstract}

\maketitle

\section{Introduction}

Let $k$ be a field and let $G = (V,E)$ be a finite, simple graph.  Let $k[V]$ denote the polynomial ring with variables corresponding to the vertices of $G$.  The edge ring $k[G]$ of $G$ is the $k$-subalgebra of $k[G]$ generated by the quadratic monomials corresponding to the edges of $G$.  The toric edge ideal $I_G$ is the presenting ideal of $k[G]$ in the polynomial ring $k[E]$ whose variables correspond to the edges of $G$.  In particular, $I_G$ is a homogeneous prime ideal generated by binomials.    When $G$ is a complete bipartite graph, $I_G$ defines a Segre embedding.  Such ideals are special cases of toric ideals where one finds defining equations of ideals generated by any set of monomials; the restriction to toric edge ideals corresponds to considering only subrings generated by squarefree monomials of degree two.  There has been significant interest in understanding the minimal free resolutions of $I_G$ for different classes of graphs; see e.g. \cite{OH1, OH2, BOV, HKO}


If $G$ is a bipartite graph, then more is known about $I_G$.  The following result is due to Ohsugi and Hibi:

\begin{thm}[{\cite[Theorem 1]{OH2}}]\label{OHthm}
Let $G$ be a bipartite graph.  The following are equivalent:
\begin{enumerate}
\item Every cycle in $G$ of length $\ge 6$ has a chord.
\item $I_G$ has a Gr\"obner basis consisting of quadratic binomials.
\item $k[G]$ is Koszul.
\item $I_G$ is generated by quadratic binomials, corresponding to the $4$-cycles of $G$.
\end{enumerate}
\end{thm}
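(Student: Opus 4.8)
\emph{Sketch of proof.} A proof can be organized around the cycle of implications $(1)\Rightarrow(2)\Rightarrow(3)\Rightarrow(4)\Rightarrow(1)$, which also yields $(2)\Rightarrow(4)$. Write the colour classes as $\{x_1,\dots,x_m\}$ and $\{y_1,\dots,y_n\}$, so that $k[E]=k[e_{ij}:x_iy_j\in E]$ and $I_G=\ker(k[E]\onto k[G])$, $e_{ij}\mapsto x_iy_j$. Three of the arrows are formal. For $(2)\Rightarrow(3)$ one invokes that a $G$-quadratic algebra is Koszul: Gröbner degeneration together with semicontinuity of graded Betti numbers reduces Koszulness of $k[G]$ to that of $k[E]/\mathrm{in}(I_G)$, which is a quotient by a quadratic monomial ideal and hence Koszul by Fröberg's theorem. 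For $(3)\Rightarrow(4)$ one uses that a Koszul algebra is quadratic, so $I_G$ is generated in degree $2$; and for bipartite $G$ the only degree-$2$ binomials in $I_G$ are the Segre relations $e_{ab}e_{cd}-e_{ad}e_{cb}$ coming from the $4$-cycles $(x_a,y_b,x_c,y_d)$ of $G$. For $(4)\Rightarrow(1)$: given a chordless cycle $C$ of length $2k\ge 6$ with perfect matchings $C_1,C_2$, the binomial $f_C=\prod_{e\in C_1}e-\prod_{e\in C_2}e$ lies in $I_G$, and chordlessness forces the fibre of each of its two monomials to consist of exactly those two monomials; hence no binomial of degree $<k$ can rewrite either, so $f_C$ is a minimal generator of degree $k\ge 3$, contradicting $(4)$.

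The real content is $(1)\Rightarrow(2)$. Let $\mathcal G$ be the set of Segre relations attached to the $4$-cycles of $G$, and equip $k[E]$ with the \emph{sorting} term order: among monomials of a fixed degree, $e_{a_1b_1}\cdots e_{a_db_d}$ with $a_1\le\cdots\le a_d$ is compared via the word $b_1\cdots b_d$, refined arbitrarily, so that the leading term of $e_{ab}e_{cd}-e_{ad}e_{cb}$ with $a\le c$, $b>d$ is the ``unsorted'' monomial $e_{ab}e_{cd}$. The plan is to prove the combinatorial statement that \emph{in every fibre of $k[E]\to k[G]$ there is a unique $\mathcal G$-reduced monomial (one divisible by no $\LT(g)$, $g\in\mathcal G$), and every monomial of the fibre reaches it by a sequence of one-step reductions}. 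Granting this, telescoping the reductions of the two terms of an arbitrary binomial of $I_G$ shows $I_G=\langle\mathcal G\rangle$, and the same statement gives $\mathrm{in}(I_G)=\langle\LT(g):g\in\mathcal G\rangle$; thus $\mathcal G$ is a quadratic Gröbner basis of $I_G$, proving $(2)$. Termination is automatic, since the sorting order is a term order and each reduction strictly decreases the monomial, so the load-bearing assertion is uniqueness of the reduced monomial in each fibre.

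The main obstacle, and where hypothesis $(1)$ enters, is that the edge set of $G$ need not be ``sortable'' on its own: an unsorted pair $e_{ab}e_{cd}$ occurring in a monomial $u$ may fail to be straightenable because $x_ay_d\notin E$ or $x_cy_b\notin E$. The key lemma to establish is that a $\mathcal G$-reduced monomial is nonetheless totally sorted. Suppose $u$ lies in a fibre and contains an unsorted pair that is not straightenable; using the prescribed vertex degrees one threads a walk through the remaining edges of $u$ joining the two offending vertices and closes it into a cycle of $G$ of length $\ge 6$. A chord of that cycle, supplied by $(1)$, produces a $4$-cycle among the edges of $u$, hence a straightenable unsorted pair; applying the corresponding Segre relation reduces $u$ and strictly decreases a suitable measure of unsortedness. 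Iterating, $u$ is carried to the totally sorted representative of its fibre, which is manifestly unique since it is determined by the two degree sequences. The delicate part is the bookkeeping in this ``no stuck configuration'' argument — manufacturing an honest cycle of length $\ge 6$ from a missing companion edge together with a sub-walk of $u$, and controlling termination of the sorting process; an alternative is to induct on $|V(G)|$, deleting a well-chosen vertex (condition $(1)$ is inherited by vertex deletions) and lifting a quadratic Gröbner basis from the smaller graph.
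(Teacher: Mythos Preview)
The paper does not prove this theorem at all: it is quoted verbatim as \cite[Theorem~1]{OH2} and used throughout as a black box (see the sentence immediately preceding it, ``The following result is due to Ohsugi and Hibi''). So there is no ``paper's own proof'' to compare your sketch against.

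As for the sketch itself: the formal implications $(2)\Rightarrow(3)\Rightarrow(4)$ and $(4)\Rightarrow(1)$ are correct and standard. In $(4)\Rightarrow(1)$ you should say a word about why the two monomials of $f_C$ are alone in their fibre when $C$ is chordless (this is where bipartiteness and the absence of chords are both used), but the idea is right.

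For $(1)\Rightarrow(2)$, the sorting-order strategy is in the right spirit, but the load-bearing step is not yet an argument. You write: ``using the prescribed vertex degrees one threads a walk through the remaining edges of $u$ joining the two offending vertices and closes it into a cycle of $G$ of length $\ge 6$.'' The edges occurring in $u$ form a multigraph in which each vertex has degree equal to its exponent in the fibre; it is not clear that a missing companion edge $x_ay_d\notin E$ (say) forces a closed walk of length $\ge 6$ \emph{in $G$} using only edges of $u$, nor that the chord guaranteed by $(1)$ lands among the edges of $u$ so as to produce a usable $4$-cycle relation. You also need to ensure the chord doesn't merely split the cycle into a $4$-cycle and another long cycle whose $4$-cycle piece is \emph{not} supported on $u$. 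These are exactly the ``bookkeeping'' issues you flag, but they are the whole proof; as written this is a plausible plan rather than a sketch one could fill in mechanically. The alternative you mention (induction on $|V(G)|$ via vertex deletion) is closer to something that can be made to work cleanly, and is in fact how arguments of this type are often organized.
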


One can generalize the property of having a quadratic generating set by considering the degrees of syzygies of $I_G$ over $S$.  The Green-Lazarsfeld condition $\mathbf{N_p}$ describes ideals (defining normal quotient rings) generated by quadrics with linear syzygies for the first $p-1$ steps of the resolution.   If $G$ is a bipartite graph, then Theorem~\ref{OHthm} says that $I_G$ satisfies property $\mathbf{N}_{1}$ if and only if every cycle in $G$ of length $\geq 6$ has a chord. The main goal of this paper is to give a combinatorial description of when $I_G$ satisfies property $\mathbf{N}_{p}$ for all $p \ge 0$.   We first need a couple definitions to state our main result.


Let $G = (V,E)$ be a bipartite graph and let $V = X \sqcup Y$ be a partition of $V$ so that $X = \{x_1,\ldots,x_m\}$, $Y = \{y_1,\ldots,y_n\}$ and all edges $e \in E$ are of the form $e = \{x_i, y_j\}$ for some $1 \le i \le m$ and $1 \le j \le n$.    
We define the \emph{bipartite complement} of $G$ as the bipartite graph $\bar{G} = (X \sqcup Y, E')$, where $E' = (X \times Y) \setminus E$, viewed as sets.  A graph is \emph{essentially a tree} if it is a tree after perhaps removing some isolated vertices; for a formal definition, see the following section. 

Our main theorem is a  combinatorial characterization of toric ideals of bipartite graphs which satisfy property $\mathbf{N}_p$ for arbitrary $p \ge 1$.



\begin{thm}\label{main2}
Let $G$ be a  bipartite graph with minimum vertex degree at least $2$ and let $k$ be a field. 
\begin{enumerate}
\item $I_G$ satisfies property $\mathbf{N}_1$ if and only if every cycle of length $\ge 6$ has a chord.
\item $I_G$ satisfies property $\mathbf{N}_2$ if and only if $\bar{G}$ is essentially a tree of diameter at most $3$.
\item $I_G$ satisfies property $\mathbf{N}_{3}$ if and only if $G$ is a complete bipartite graph unless the characteristic of $k$ is 3 and $G = K_{m,n}$  with $\min\{m, n\} \ge 5$.
\item $I_G$ satisfies property $\mathbf{N}_{p}$ for some/any $p \ge 4$ if and only if $G = K_{2,n}$ for some $n$.
\end{enumerate}
\end{thm}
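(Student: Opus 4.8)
The plan is to establish Theorem~\ref{main2} by reducing each equivalence to a statement about the Betti numbers of a concrete toric ideal, using parts (1)--(3) as a funnel so that by the time I am analyzing property $\mathbf{N}_p$ for $p \ge 4$ I may assume $G$ is complete bipartite (with the characteristic-3 exception already excluded). Concretely, part (3) tells us that if $I_G$ satisfies $\mathbf{N}_3$ then $G = K_{m,n}$, and the characteristic-3 caveat only arises when $\min\{m,n\} \ge 5$; since $\mathbf{N}_p \Rightarrow \mathbf{N}_3$ for $p \ge 4$, the first reduction is: it suffices to determine for which $m \le n$ the Segre ideal $I_{K_{m,n}}$ satisfies $\mathbf{N}_p$ for a given $p \ge 4$, and then check that the answer is independent of $p \ge 4$.

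First I would handle the positive direction for $K_{2,n}$. The toric edge ideal $I_{K_{2,n}}$ is the ideal of $2\times 2$ minors of a generic $2 \times n$ matrix, whose minimal free resolution is the Eagon--Northcott complex. This resolution is linear throughout (the Eagon--Northcott complex of a $2\times n$ matrix is the linear strand), so $I_{K_{2,n}}$ satisfies $\mathbf{N}_p$ for every $p$; in particular it satisfies $\mathbf{N}_p$ for all $p \ge 4$ simultaneously, which gives the ``some/any'' phrasing for free. This direction is essentially a citation of the structure of the Eagon--Northcott complex plus the observation that $k[K_{2,n}]$ is a normal (indeed Cohen--Macaulay) domain.

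The harder direction is showing that if $G = K_{m,n}$ with $2 \le m \le n$ and $m \ge 3$, then $I_G$ fails $\mathbf{N}_4$. Here the key input is Lascoux's computation of the minimal free resolution of the Segre ideal $I_{K_{m,n}}$ (equivalently, the ideal of $2\times 2$ minors of a generic $m\times n$ matrix) via representations of $GL_m \times GL_n$: the $(i+1)$-st term of the resolution decomposes into Schur functors, and the internal degrees of the generators of the $i$-th syzygy module are governed by the shapes of the partitions appearing. The plan is to locate a syzygy in homological degree $4$ (i.e. the term $F_4$ of the resolution of $S/I_G$, or $F_3$ if one indexes the resolution of $I_G$) whose internal degree exceeds the linear bound $i + 2$, and to verify that such a nonlinear syzygy is present precisely when $m \ge 3$. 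Concretely, I expect the relevant obstruction to be the appearance of a Schur functor indexed by a partition with three columns (corresponding to $3\times 3$ minors entering the resolution), which forces a generator in internal degree $6$ at homological step $4$ rather than the linear value $5$; when $m = 2$ no such partition can appear because all partitions are constrained to at most two columns, which is exactly why $K_{2,n}$ survives. I would also need to confirm that this nonlinearity is genuinely in homological degree $\le 4$ and not pushed higher, and that it is characteristic-independent (Lascoux's resolution is not always minimal in small characteristic, but the degree-$6$ generator in step $4$ should persist, or alternatively one can cite that $\mathbf{N}_3$ already fails for $K_{m,n}$ in characteristic $3$ when $m \ge 5$, handling those cases directly).

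**The main obstacle** I anticipate is pinning down exactly where the first nonlinear syzygy of $I_{K_{m,n}}$ occurs as a function of $m$, and doing so uniformly in the characteristic of $k$. Lascoux's resolution is explicit but combinatorially intricate, and it is known to fail to be minimal in positive characteristic; so rather than reading off Betti numbers from it directly, I would instead argue via a lower bound for Betti numbers that is characteristic-robust --- for instance, by restricting to a generic $3\times 3$ submatrix, using that $I_{K_{3,3}}$ is the defining ideal of a non-linearly-presented variety with a known nonlinear syzygy in homological degree $4$, and invoking the fact that $\mathbf{N}_p$ is inherited by coordinate subspace sections in the appropriate sense (the relevant Tor module of $K_{m,n}$ surjects onto or contains that of $K_{3,3}$). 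Once the $K_{3,3}$ computation is in hand --- which is a finite check, feasible by hand or by a computer algebra system and independent of characteristic once one exhibits the explicit cycle in the Koszul complex --- the general case $3 \le m \le n$ follows by this restriction principle, completing part (4).
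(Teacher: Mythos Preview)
Your proposal addresses only part (4) in any detail, treating (1)--(3) as established inputs.  For part (4) your eventual strategy---reduce via (3) to $G=K_{m,n}$, dispatch $K_{2,n}$ with the Eagon--Northcott complex, and obstruct $K_{m,n}$ for $m\ge 3$ by restricting to the induced $K_{3,3}$---is exactly what the paper does.  The paper verifies the $K_{3,3}$ obstruction more cleanly than your proposed explicit Koszul cycle: $I_{K_{3,3}}$ is the ideal of $2\times 2$ minors of a generic $3\times 3$ matrix, resolved characteristic-free by the Gulliksen--Neg\aa rd complex, so it is Gorenstein of codimension $4$ with $\beta_{3,6}=1$.  Your ``restriction principle'' is the paper's Theorem~\ref{InducedGraphBetti}, proved via multigraded Betti numbers: for a multidegree $\underline\alpha$ supported on the vertices of an induced subgraph $H$, the fiber simplicial complex $\Gamma(\underline\alpha)$ is the same whether computed in $G$ or in $H$.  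So for (4) your plan is correct and essentially identical to the paper's, once the Lascoux detour is abandoned.

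What your proposal does not address is that parts (2) and (3) carry nearly all the content of the theorem, and you have sketched nothing toward them.  Part (2) in the paper requires: identifying eight explicit bipartite graphs $H^{(1)},\ldots,H^{(8)}$ with $\beta_{1,4}(I_{H^{(i)}})\neq 0$; a substantial purely graph-theoretic argument (Theorem~\ref{GraphConditions}) that any $G$ with $\delta(G)\ge 2$, chorded $\ge 6$-cycles, and $\bar G$ not essentially a tree of diameter $\le 3$ must contain one of them as an induced subgraph; and, for the converse, the observation (via Koszulness) that nonlinear first syzygies must be Koszul syzygies on pairs of $4$-cycle binomials, followed by a case analysis over the five possible intersection patterns of two $4$-cycles showing each such Koszul syzygy is a combination of linear ones.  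Part (3) also has real content beyond the funnel: proving $I_{K_{m,n}}$ satisfies $\mathbf N_3$ in characteristic $\neq 3$ when $\min\{m,n\}\ge 5$ is not a direct citation of Lascoux (his complex need not be minimal in positive characteristic); the paper uses a Koszul-algebra bound of Avramov--Conca--Iyengar to force any failure into $\beta_{2,5}$, then Theorem~\ref{InducedGraphBetti} to reduce to $K_{5,5}$, then a finite check.  If your proposal is meant to cover the full theorem, these are genuine gaps, not routine details.
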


\noindent The proof of Theorem~\ref{main2} is given in the proofs of Theorems~\ref{N2entire}, \ref{N3}, and \ref{N4}.

The rest of this paper is organized as follows: Section~\ref{pre} sets notation and basic definitions.  Section~\ref{obstruct} contains our main tools for finding obstructions to vanishing of graded Betti numbers.    Section~\ref{graph} gives a purely graph-theoretic result we need to connect local and global graph structure. Our main results appear in Section~\ref{main}.  In Section~\ref{poly}, we also obtain a characterization of the Green-Lazarsfeld conditions for ideals associated to convex polyominoes.  This seems to correct an omission in the characterization of linearly presented polyomino ideals in \cite{EHH}.  Finally, in Section~\ref{reg} we apply our result to a special case of a recent question of Constantinescu, Kahle, and Varbaro \cite{CKV}.

\section{Preliminaries}\label{pre}

Here we fix notation for the remainder of the paper.  We first record the standard graph-theoretic definitions we require.

\subsection{Graph Theory}

All graphs considered in this paper are finite and simple.  Let $G = (V,E)$ be a finite simple graph with vertex set $V$ and edge set $E$.  The graph $G$ is bipartite if there is a partition $V = X \sqcup Y$ such that all edges in $E$ lie in $X \times Y$; that is, all edges contain one vertex  in $X$ and one vertex  in $Y$.  For positive integers $m,n$, the complete bipartite graph $K_{m,n}$ has vertex set $V = X \sqcup Y$ with $|X| = m$, $|Y| = n$ and edge set $E = X \times Y$.  The degree of a vertex is the number of edges incident to it.  The minimum degree of a vertex in a graph $G$ is denoted $\delta(G)$.  An isolated vertex is a vertex of degree $0$.  A path of length $t$ from vertex $v$ to vertex $w$ is a sequence of vertices $v = v_0, v_1, \ldots, v_t = w$ such that $\{v_{i-1},v_i\} \in E$ for all $1 \le i \le t$.  A graph is connected if for any two vertices $v,w \in V$, there is a path from $v$ to $w$.    A cycle of length $t$ in $G$ is a path of length $t$ from $v$ to itself.    Such a cycle has a chord if $\{v_i, v_j\} \in E$ for distinct $1 \le i, j \le t$.  A graph $G$ is a tree if there is a unique path between any two distinct vertices of $G$, or equivalently, $G$ is a tree if it is connected and has no cycles.  
Given a subset $W \subseteq V$, the induced graph $G_W$ is the graph with vertex set $W$ and edge set given by all edges of $G$ both of whose vertices lie in $W$.   The diameter of a graph is the minimum integer $n$ such that for any pairs of vertices $v, w \in V$, there is a path of length at most $n$ starting at $v$ and ending at $w$. A perfect matching in a graph is a collection $M \subseteq E$ such that every vertex is incident to exactly one edge in $M$.  A bridge is an edge whose deletion increases the number of connected components.

We add some new graph-theoretic definitions to the standard definitions above.   For a nonnegative integer $k$ and graph $G$, we define the \textbf{degree $k$ subgraph} of $G$ to be the largest induced subgraph $G_k$ such that all vertices have degree at least $k$.  Thus $G_0 = G$; $G_1$ is the subgraph of $G$ with all isolated vertices removed.  
 For a graph property $P$, we say that a graph is \textbf{essentially P} if $G_1$ satisfies property $P$.  Thus a graph $G$ is called \textbf{essentially a tree} if $G_1$ is a tree.   If $G = (X \sqcup Y, E)$ is a bipartite graph, then the \textbf{bipartite complement} of $G$, denoted $\bar{G}$, is the graph with same vertex set $X \sqcup Y$ and with edge set $(X \times Y) \setminus E$; that is, an edge $\{x,y\}$ with $x \in X$ and $y \in Y$ is in $\bar{G}$ if and only if it is not in $G$.  

\subsection{Toric Edge Ideals} Let $G = (V,E)$ be a finite simple graph, with $V = \{v_1,\ldots,v_n\}$ and fix a field $k$.  By abuse of notation, we also view the $v_i$ has variables in the polynomial ring $k[V] = k[v_1,\ldots,v_n]$.  The \textbf{edge ring} of $G$, denoted $k[G]$, is the  $k$-subalgebra of $k[V]$ generated by $v_iv_j$, where $\{v_i, v_j\} \in E$.   In the special case we focus on, where $G$ is a bipartite graph, we denote by $V = X \sqcup Y$ the partition of the vertex set, where $X = \{x_1,\ldots,x_m\}$ and $Y = \{y_1,\ldots,y_n\}$ and all edges (viewed as ordered pairs) are contained in $X \times Y$.  Denote by $S = k[e_{i,j}\,|\,\{x_i,y_j\} \in E]$ a polynomial ring with variables $e_{i,j}$ corresponding to the edges in $G$.   The surjective map $\pi: S = k[e_{i,j}\,|\, \{x_i,y_j\} \in E] \to k[G]$ sends $e_{i,j} \mapsto x_iy_j$.    The ideal $I_G = \ker(\pi)$ is called the \textbf{toric edge ideal} of $G$.   For an arbitrary graph $G$,  it is well-known that the generators of $I_G$ are binomials corresponding to even closed walks of $G$ \cite[Lemma 5.9]{HHO}.  When $G$ is bipartite, the ring $S/I_G \cong k[G]$ is Cohen-Macaulay \cite[Corollary  5.26]{HHO}.  Ohsugi and Hibi \cite{OH2} gave the characterization in Theorem~\ref{OHthm} of bipartite graphs for which $I_G$ is generated by quadratic binomials.  It follows that all such rings are normal \cite[Corollary 5.25]{HHO}.  It is then natural to investigate the properties of the syzygies of such ideals.  

\subsection{Green-Lazarsfeld Conditions}

Unless otherwise noted, we regard $S$ as a standard graded ring with $\deg(e_{i,j}) = 1$ for all $i,j$.  Writing $S(-j)$ for the rank-one free $S$-module with $S(-j)_i = S_{i-j}$, we consider the minimal graded free resolution of $S/I_G$:
\[0 \to \bigoplus_{j} S(-j)^{\beta_{h,j}} \to \cdots \to \bigoplus_j S(-j)^{\beta_{1,j}} \to S.\]
Here $\beta_{i,j}$ denotes the minimal, graded Betti numbers of $S/I_G$, which by the uniqueness of minimal, graded free resolutions, are invariants of $S/I_G$.  The projective dimension is $\pd_S(S/I_G) = \max\{i\,|\,\beta_{i,j} \neq 0\} = h$ and the regularity is $\reg(S/I_G) = \max\{j - i\,|\,\beta_{i,j} \neq 0\}$.  We often refer to the graded Betti numbers of $I_G$, noting that $\beta_{i,j}(S/I_G) = \beta_{i-1,j}(I_G)$, and therefore $\pd_S(S/I_G) = \pd_S(I_G) + 1$ and $\reg(S/I_G) = \reg(I_G) - 1$.  

With the notation above, we say that $I_G$ satisfies condition $\mathbf{N}_p$ if $S/I_G$ is (projectively) normal and $\beta_{i,j}(I_G) = 0$ for $i < p$ and $j > i + 2$.  Thus condition $\mathbf{N}_0$ means that $S/I_G$ is normal; condition $\mathbf{N}_1$ means that in addition to $\mathbf{N}_0$, $I_G$ is generated by quadrics; condition $\mathbf{N}_2$ means that in addition to satisfying $\mathbf{N}_1$, $I_G$ is linearly presented; and so on.  This idea was first defined by Green and Lazarsfeld \cite{GL1, GL2}.  The $\mathbf{N}_p$ conditions and their generalizations have been well studied; see for example \cite{EGHP, KP}.

Note that in the specific case that $G = K_{m,n}$, the ideal $I_G$ defines to the image of the Segre embedding of $\P^{m-1}_k \times \P^{n-1}_k \into \P^{mn-1}_k$ whose resolutions in characteristic 0 are known by work of Pragacz-Weyman \cite{PW} and Lascoux \cite{L}; see also Roberts \cite{R}.  If $\min\{m,n\} \le 4$, Hashimoto and Kurano showed that the Betti numbers of $I_G$ do not depend on the characteristic \cite{HK}.  In particular, this includes $K_{2,n}$ whose toric edge ideal $I_{K_{2,n}}$ is resolved by the linear Eagon-Northcott resolution in all characteristics.  For all $m,n$, the second Betti numbers $\beta_{2,i}(S/I_{K_{m,n}})$ are also independent of the characteristic \cite{HK}.  However, in characteristic $3$, Hashimoto \cite{H} showed that $\beta_{3,i}(S/I_{K_{m,n}})$ does depend on the characteristic of the base field when $m,n \ge 5$.  In this paper, we give a complete description of the Green-Lazarsfeld conditions for bipartite toric edge ideals.  It follows from \cite{L, PW, R} that the precise $\mathbf{N}_p$ conditions for complete bipartite graphs in characteristic 0 are known; see \cite{R} for a summary.

When $I_G$ is the toric edge ideal of a bipartite graph, Ohsugi and Hibi \cite[Theorem 1.1]{OH2} proved that $I_G$ is generated by quadratic binomials (i.e. satisfies condition $\mathbf{N}_1$) if and only if every cycle in $G$ of length at least $6$ has a chord.  Ohsugi and Hibi \cite[Theorem 4.6]{OH1} also showed that $I_G$ has a linear free resolution (i.e. satisfies condition $\mathbf{N}_p$ for all $p$) if and only if $G = K_{2,n}$ for some $n$.   Thus our main theorem interpolates between these two results.  In related work, Hibi, Matsuda, and Tsuchiya \cite{HMT} show that the only toric edge ideals with $3$-linear resolutions are hypersurfaces. 


\section{Obstructions to Vanishing of Graded Betti Numbers}\label{obstruct}

In this section we prove that the nonvanishing of certain graded Betti numbers of the toric edge ideal of a graph $G$ correspond in a precise way to forbidden induced subgraphs of $G$.  A version of this result was proved previously by Ha, Kara, and O'Keefe in \cite[Theorem 3.6]{HKO}.  Our result quantifies how large the forbidden subgraph must be relative to the index of the graded Betti number in question.  Toward this end, we follow the notation in \cite{P}.  Let $G = (V,E)$ be a finite simple graph on vertex set $V = \{v_1,\ldots,v_n\}$.  Given a field $k$, the edge ring $k[G] = k[v_iv_j\,|\,\{v_i,v_j\} \in E]$ is a subring of the polynomial ring $k[V]$, which we view as a multigraded ring by setting ${\rm mdeg}(v_i) = \underline{e}_i$, where $\underline{e}_i$ denotes the $i$th standard basis vector of $\Z^n$.  By setting $k[E] = k[e_{ij}\,|\,\{v_i,v_j\} \in E]$ to be the multigraded ring with ${\rm mdeg}(e_{ij}) = {\rm mdeg}(v_iv_j) = \underline{e}_i + \underline{e}_j$, the toric edge ideal $I_G \subset k[E]$ is also multigraded.  Fix a multidegree $\underline{\alpha}$.  The fiber of $\underline{\alpha}$, denoted $C_{\underline{\alpha}}$, is the set of all monomials of $k[E]$ of multidegree $\underline{\alpha}$.  We let $\Gamma(\underline{\alpha})$ denote the simplicial complex associated to $\underline{\alpha}$ with vertices identified with the variables $e_{ij}$  and whose faces are identified with the radicals of monomials in $\Gamma(\underline{\alpha})$.  With this notation, we have the following result of Aramova and Herzog:

\begin{thm}[cf. {\cite[Theorem 67.5]{P}}]
For $\underline{\alpha} \in \mathbb{N}^n$ and $i \ge 0$ we have
\[\beta_{i,\underline{\alpha}}(I_G) = \dim_k \widetilde{H}_i(\Gamma(\underline{\alpha});k).\]
\end{thm}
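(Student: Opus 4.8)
**Proof proposal for the Aramova–Herzog / LCM-lattice-style formula $\beta_{i,\underline{\alpha}}(I_G) = \dim_k \widetilde{H}_i(\Gamma(\underline{\alpha});k)$.**

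The plan is to compute the multigraded Betti numbers of $I_G$ by tensoring a minimal free resolution with $k$ and identifying each graded strand with a simplicial chain complex. First I would recall that $I_G = \ker(\pi)$ is a toric ideal, hence a lattice ideal, and in particular it is generated by binomials; since $k[E]$ is multigraded by $\Z^n$ via ${\rm mdeg}(e_{ij}) = \underline{e}_i + \underline{e}_j$, the ideal $I_G$ is a multigraded submodule and the whole minimal free resolution $F_\bullet \to S/I_G$ (with $S = k[E]$) can be taken $\Z^n$-graded. Thus $\beta_{i,\underline{\alpha}}(I_G) = \beta_{i+1,\underline{\alpha}}(S/I_G) = \dim_k \tor^S_{i+1}(S/I_G, k)_{\underline{\alpha}}$, and this Tor can be computed from the Koszul complex $K_\bullet = S\otimes_k \Lambda^\bullet(k^E)$ on the variables: $\tor^S_\bullet(S/I_G,k)_{\underline{\alpha}} = H_\bullet\big((S/I_G \otimes_S K_\bullet)_{\underline{\alpha}}\big)$.

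Next I would unravel the multigraded piece $(S/I_G \otimes_S K_\bullet)_{\underline{\alpha}}$ combinatorially. In multidegree $\underline{\alpha}$, a basis for $(K_i)_{\underline{\alpha}}$ is indexed by pairs $(m, \sigma)$ where $\sigma = \{e_{i_0j_0},\dots,e_{i_{i}j_{i}}\}$ is an $(i{+}1)$-subset of the variables and $m$ is a monomial with $\mathrm{mdeg}(m) + \sum_{e\in\sigma}\mathrm{mdeg}(e) = \underline{\alpha}$; modding out by $I_G$ means we remember only the image of $m$ in $S/I_G = k[G]$, i.e. we remember $m$ only up to the toric relations. The key observation — this is exactly the content of Aramova–Herzog and of the Bruns–Herzog / lcm-lattice philosophy adapted to fibers — is that the set $C_{\underline{\alpha}}$ of \emph{all} monomials of $k[E]$ of multidegree $\underline{\alpha}$ maps onto a single monomial of $k[G]$ (all fiber monomials are congruent mod $I_G$), so after picking that monomial the data $(m,\sigma)$ reduces to just the squarefree support, i.e. to a face of the simplicial complex $\Gamma(\underline{\alpha})$ on vertex set $\{e_{ij} : e_{ij} \mid$ some monomial in $C_{\underline{\alpha}}\}$ whose faces are the radicals of monomials in the fiber. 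I would then check that under this identification the Koszul differential becomes, up to sign, the simplicial boundary map of the (augmented) chain complex of $\Gamma(\underline{\alpha})$, with the homological degree shifted by one (an $(i{+}1)$-element Koszul subset ↔ an $i$-dimensional face). Hence $H_{i+1}\big((S/I_G\otimes K_\bullet)_{\underline{\alpha}}\big) = \widetilde H_i(\Gamma(\underline{\alpha});k)$, which is the claim.

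The main obstacle — and the step requiring genuine care rather than bookkeeping — is verifying that the fiber $C_{\underline{\alpha}}$ really does collapse to a single class in $k[G]$ and that the resulting boundary-map identification is an honest chain isomorphism, not merely an isomorphism of graded vector spaces in each slot. Concretely one must pin down why two monomials $e^{\bf u}, e^{\bf v}$ with the same multidegree $\underline{\alpha}$ differ by an element of $I_G$ (immediate since $\pi(e^{\bf u}) = \pi(e^{\bf v})$, both equal to the common monomial $\prod v_t^{\alpha_t}$ in $k[V]$), and then track signs carefully through the identification $e_{i_0j_0}\wedge\cdots\wedge e_{i_ij_i} \otimes (\text{monomial}) \leftrightarrow [e_{i_0j_0},\dots,e_{i_ij_i}]$ so that alternating Koszul signs match simplicial orientation signs. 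Since this is precisely Theorem 67.5 of \cite{P} (itself the toric specialization of Aramova–Herzog), I would keep the argument brief: cite \cite{P} for the general statement, and include only the short verification that in the toric-edge-ideal setting the complex $\Gamma(\underline{\alpha})$ as defined above — vertices $=$ variables $e_{ij}$ dividing some monomial of $C_{\underline{\alpha}}$, faces $=$ radicals of those monomials — is the correct simplicial complex. No separate lemma is needed; the statement follows directly once the dictionary is in place.
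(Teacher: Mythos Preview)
The paper does not supply a proof of this theorem at all; it is stated with a citation to \cite[Theorem~67.5]{P} and used as a black box. Your proposal ultimately lands in the same place---cite \cite{P} and move on---so in that sense your approach matches the paper's.

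That said, the proof sketch you give before arriving at that conclusion is the correct one and is essentially the Aramova--Herzog argument: compute $\tor^S_{i+1}(S/I_G,k)_{\underline{\alpha}}$ via the Koszul complex, observe that all monomials in the fiber $C_{\underline{\alpha}}$ become equal in $S/I_G$ (since they map to the same monomial $\prod v_t^{\alpha_t}$ under $\pi$), and identify the resulting multigraded strand with the augmented simplicial chain complex of $\Gamma(\underline{\alpha})$, shifted by one. The one point worth stating a bit more carefully is the chain-map identification: a basis element in homological degree $i+1$ and multidegree $\underline{\alpha}$ is a pair (face $\sigma$ of size $i+1$, monomial $m$ with $\mathrm{mdeg}(m\cdot e_\sigma)=\underline{\alpha}$), and after passing to $S/I_G$ the monomial $m$ is determined by $\sigma$ up to a unit; one must check that this assignment is compatible with the Koszul differential, which drops one $e_{ij}$ from $\sigma$ and multiplies $m$ by it---and that the resulting class in $S/I_G$ is again the unique one in the appropriate smaller fiber. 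This is routine but is the substance of the argument. Nothing in your sketch is wrong; it simply goes further than the paper does.
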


\noindent Here $\widetilde{H}_i(\Delta;k)$ denotes the reduced simplicial homology of the simplicial complex $\Delta$ with coefficients in $k$.  Comparing the standard grading on $I_G$ with the multigrading, we see that 
\[\beta_{i,j}(I_G) = \sum_{\substack{\underline{\alpha}\\\sum \underline{\alpha} = 2j}} \beta_{i,\underline{\alpha}}(I_G).\]
This perspective gives us a way of finding local obstructions to the vanishing of certain graded Betti numbers of $I_G$.  

\begin{thm}\label{InducedGraphBetti}
 Let $G$ be a graph with toric edge ideal $I_G$.  Then $\beta_{i,j}(I_G) \neq 0$ if and only if there is an induced subgraph $H$ of $G$ with at most $2j$ vertices such that $\beta_{i,j}(I_H) \neq 0$.
\end{thm}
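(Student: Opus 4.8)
The plan is to use the multigraded perspective of Aramova--Herzog together with the observation that the simplicial complex $\Gamma(\underline{\alpha})$ in a fixed multidegree $\underline{\alpha}$ only ``sees'' a bounded number of vertices and edges. First I would unravel the standard-to-multigraded comparison: since $\beta_{i,j}(I_G) = \sum_{\sum\underline{\alpha}=2j}\beta_{i,\underline{\alpha}}(I_G)$ and each summand is $\dim_k\widetilde{H}_i(\Gamma(\underline{\alpha});k)$, we have $\beta_{i,j}(I_G)\neq 0$ if and only if there is a multidegree $\underline{\alpha}$ with $\sum\underline{\alpha}=2j$ and $\widetilde{H}_i(\Gamma(\underline{\alpha});k)\neq 0$. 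So it suffices to show, for such an $\underline{\alpha}$, that if $H$ is the induced subgraph of $G$ on the vertex set $\{v_\ell : \alpha_\ell \neq 0\}$, then (a) $|V(H)|\le 2j$, and (b) the complex $\Gamma_G(\underline{\alpha})$ computed inside $k[E(G)]$ coincides with the complex $\Gamma_H(\underline{\alpha}')$ computed inside $k[E(H)]$, where $\underline{\alpha}'$ is $\underline{\alpha}$ restricted to $V(H)$.

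For (a): a monomial $\prod e_{i\ell j_\ell}$ of multidegree $\underline{\alpha}$ with $\sum\underline{\alpha}=2j$ is a product of $j$ edge-variables (each contributing total degree $2$), so $\underline{\alpha}$ is the sum of $j$ vectors of the form $\underline{e}_a+\underline{e}_b$; hence at most $2j$ coordinates of $\underline{\alpha}$ are nonzero and $|V(H)|\le 2j$. The subtle point, and the main thing to check carefully, is that $C_{\underline{\alpha}}$ could be empty for a chosen $\underline{\alpha}$, in which case there is nothing to prove for that multidegree; and conversely one must be sure that when $\widetilde H_i(\Gamma(\underline\alpha);k)\ne 0$ the fiber is nonempty, so every variable $e_{ij}$ appearing as a vertex of $\Gamma(\underline\alpha)$ has both endpoints among the support of $\underline\alpha$. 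This is what makes the induced subgraph $H$ the right object.

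For (b), which I expect to be the real content: I would argue that the fiber $C_{\underline{\alpha}}$, the set of monomials of $k[E(G)]$ of multidegree $\underline{\alpha}$, depends only on the edges of $G$ whose endpoints lie in $\supp(\underline{\alpha})$ — that is, only on $E(H)$ — because any monomial realizing $\underline{\alpha}$ uses only such edge-variables, and an edge $e_{ij}\in E(G)$ with $\{i,j\}\subseteq\supp(\underline\alpha)$ is by definition an edge of the induced subgraph $H$. Therefore $C_{\underline{\alpha}}$ is literally the same set whether we work in $k[E(G)]$ or in $k[E(H)]$, and hence $\Gamma_G(\underline{\alpha}) = \Gamma_H(\underline{\alpha}')$ as simplicial complexes. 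Applying Aramova--Herzog again, now to $H$, gives $\beta_{i,\underline{\alpha}'}(I_H) = \dim_k\widetilde{H}_i(\Gamma_H(\underline{\alpha}');k) = \dim_k\widetilde{H}_i(\Gamma_G(\underline{\alpha});k)\neq 0$, and since $\sum\underline{\alpha}' = \sum\underline{\alpha} = 2j$ this shows $\beta_{i,j}(I_H)\neq 0$ with $|V(H)|\le 2j$.

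The converse direction is comparatively soft: if $H$ is an induced subgraph of $G$ with $\beta_{i,j}(I_H)\neq 0$, pick a multidegree $\underline{\alpha}'$ on $V(H)$ with $\sum\underline{\alpha}'=2j$ and $\widetilde{H}_i(\Gamma_H(\underline{\alpha}');k)\neq 0$, extend $\underline{\alpha}'$ by zeros to a multidegree $\underline{\alpha}$ on $V(G)$, and use the same identification $\Gamma_G(\underline{\alpha})=\Gamma_H(\underline{\alpha}')$ (valid precisely because $H$ is induced, so no extra edge-variables of $G$ enter the fiber) to conclude $\beta_{i,\underline{\alpha}}(I_G)\neq 0$, whence $\beta_{i,j}(I_G)\neq 0$. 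The only place one has to be careful is in using ``induced'': if $H$ were merely a subgraph, an edge of $G$ within $V(H)$ but not in $E(H)$ would add a vertex to $\Gamma_G(\underline\alpha)$ that is absent from $\Gamma_H(\underline\alpha')$, breaking the identification; the induced hypothesis is exactly what rules this out, and I would flag this as the key structural point of the proof.
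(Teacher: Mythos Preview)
Your proposal is correct and follows essentially the same route as the paper: pick a multidegree $\underline{\alpha}$ with $\beta_{i,\underline{\alpha}}(I_G)\neq 0$, take $H$ to be the induced subgraph on the support of $\underline{\alpha}$ (which has at most $2j$ vertices), and verify that the fibers $C_{\underline{\alpha}}^G$ and $C_{\underline{\alpha}}^H$ coincide so that the associated simplicial complexes and hence the multigraded Betti numbers agree. The paper only writes out the forward direction explicitly, so your explicit treatment of the converse---extending $\underline{\alpha}'$ by zeros and again using that $H$ is induced to identify the fibers---is a welcome addition but not a departure in method.
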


\begin{proof}  
  Let $G = (V, E)$ with $V = \{v_1, \hdots, v_n\}$ and $E = \{e_1, \hdots, v_r\}$. Suppose $\beta_{i, j}(I_G) \neq 0$. Then $\beta_{i, \underline{\alpha}}(I_G) \neq 0$ for some multidegree $\underline{\alpha}$ such that $\sum_{\ell} \alpha_\ell = 2j$. Let $V' = \{v_\ell \in V : \alpha_\ell \neq 0\}$. Since at most $2j$ of the $\alpha_\ell$ are nonzero, we have $\abs{V'} \leq 2j$. Let $H = (V', E')$ be the induced subgraph of $G$ on $V'$. $k[E']$ is a subring of $S$, so it is $\Z^n$-graded. Let $C_{\underline{\alpha}}^G$ and $C_{\underline{\alpha}}^H$ denote the fibers of $\alpha$ in $k[E]$ and $k[E']$ respectively. Since $k[H]$ is a subring of $k[G]$, we know $C_{\underline{\alpha}}^H \subseteq C_{\underline{\alpha}}^G$.

  Suppose $f = e_1^{m_1}e_2^{m_2}\cdots e_r^{m_r}\in C_{\underline{\alpha}}^G \setminus C_{\underline{\alpha}}^H$. Since $f \notin k[E']$, $m_\ell > 0$ for some $\ell$ such that $e_\ell \notin E'$. Since $H$ is induced, $e_{\ell} = \{v_{\ell_1}, v_{\ell_2}\}$ where at least one of $v_{\ell_1}$ and $v_{\ell_2}$ is not in $V'$. Suppose, w.l.o.g. $v_{\ell_1} \notin V'$. Since $f$ has multidegree $\underline{\alpha}$, we have $\alpha_{\ell_1} \neq 0$, giving that $v_{\ell_1} \in V'$, a contradiction. Thus no such $f$ exists and $C_{\underline{\alpha}}^G = C_{\underline{\alpha}}^H$. Then the associated simplicial complexes are the same and we must have $\beta_{i, \underline{\alpha}}(I_H) \neq 0$, giving $\beta_{i,j}(I_H) \neq 0$.
\end{proof}

It follows that to characterize when a particular $\beta_{i,j}(I_G) = 0$, we could simply enumerate over all graphs $H$ of at most $2j$ vertices for which $\beta_{i,j}(I_H) \neq 0$ and then check to see if any such $H$ is an induced subgraph of $G$.  While this strategy would work, it is not very efficient.  In the next section adopt a more efficient strategy taking advantage of the fact that quadratic toric edge rings of bipartite graphs are Koszul.  However, we do take advantage of the previous theorem by identifying a few key induced subgraphs that act as obstructions to satisfying the $N_p$ property for various $p$.  

There are 8 bipartite graphs whose presence as an induced subgraph characterizes failure of $I_G$ satisfying $\mathbf{N}_2$ for $G$ a bipartite graph and such that every cycle of length at least 6 has a chord. These 8 forbidden graphs are those pictured in Figure~\ref{3cases}.

\begin{figure}[H]
    \centering
    \begin{subfigure}[t]{2.5in}
      \centering
      \begin{tikzpicture}
        \def\hor{3cm}
        \def\v{-1cm}

        \draw[fill=black] (0*\hor, 0*\v) circle (.1cm) node (v1) {} node[left] {$x_1$};
        \draw[fill=black] (1*\hor, 0*\v) circle (.1cm) node (u1) {} node[right] {$y_1$};
        \draw[fill=black] (0*\hor, 1*\v) circle (.1cm) node (v2) {} node[left] {$x_2$};
        \draw[fill=black] (1*\hor, 1*\v) circle (.1cm) node (u2) {} node[right] {$y_2$};
        \draw[fill=black] (0*\hor, 2*\v) circle (.1cm) node (v3) {} node[left] {$x_3$};
        \draw[fill=black] (1*\hor, 2*\v) circle (.1cm) node (u3) {} node[right] {$y_3$};

        \draw (v1) -- (u1);
        \draw (v1) -- (u2);
        \draw (v2) -- (u1);
        \draw (v2) -- (u2);
        \draw (v2) -- (u3);
        \draw (v3) -- (u2);
        \draw (v3) -- (u3);
      \end{tikzpicture}
      \caption*{$H^{(1)}$: Two 4-cycles which share an edge.}\label{3case1}
    \end{subfigure}
\quad
    \begin{subfigure}[t]{2.5in}
      \centering
      \begin{tikzpicture}
        \def\hor{3cm}
        \def\v{-1cm}

        \draw[fill=black] (0*\hor, 0*\v) circle (.1cm) node (v1) {} node[left] {$x_1$};
        \draw[fill=black] (1*\hor, 0*\v) circle (.1cm) node (u1) {} node[right] {$y_1$};
        \draw[fill=black] (0*\hor, 1*\v) circle (.1cm) node (v2) {} node[left] {$x_2$};
        \draw[fill=black] (1*\hor, 1*\v) circle (.1cm) node (u2) {} node[right] {$y_2$};
        \draw[fill=black] (0*\hor, 2*\v) circle (.1cm) node (v3) {} node[left] {$x_3$};
        \draw[fill=black] (1*\hor, 2*\v) circle (.1cm) node (u3) {} node[right] {$y_3$};
        \draw[fill=black] (0*\hor, 3*\v) circle (.1cm) node (v4) {} node[left] {$x_4$};
        \draw[fill=black] (1*\hor, 3*\v) circle (.1cm) node (u4) {} node[right] {$y_4$};

        \draw (v1) -- (u1);
        \draw (v1) -- (u2);
        \draw (v2) -- (u1);
        \draw (v2) -- (u2);
        \draw (v3) -- (u3);
        \draw (v3) -- (u4);
        \draw (v4) -- (u3);
        \draw (v4) -- (u4);

      \end{tikzpicture} 
      \caption*{$H^{(2)}$: Two disjoint 4-cycles.}\label{3case2}
    \end{subfigure} \\
 \quad
    \begin{subfigure}[t]{2.5in}
      \centering
      \begin{tikzpicture}
        \def\hor{3cm}
        \def\v{-1cm}

        \draw[fill=black] (0*\hor, 0*\v) circle (.1cm) node (v1) {} node[left] {$x_1$};
        \draw[fill=black] (1*\hor, 0*\v) circle (.1cm) node (u1) {} node[right] {$y_1$};
        \draw[fill=black] (0*\hor, 1*\v) circle (.1cm) node (v2) {} node[left] {$x_2$};
        \draw[fill=black] (1*\hor, 1*\v) circle (.1cm) node (u2) {} node[right] {$y_2$};
        \draw[fill=black] (0*\hor, 2*\v) circle (.1cm) node (v3) {} node[left] {$x_3$};
        \draw[fill=black] (1*\hor, 2*\v) circle (.1cm) node (u3) {} node[right] {$y_3$};
        \draw[fill=black] (0*\hor, 3*\v) circle (.1cm) node (v4) {} node[left] {$x_4$};
        \draw[fill=black] (1*\hor, 3*\v) circle (.1cm) node (u4) {} node[right] {$y_4$};

        \draw (v1) -- (u1);
        \draw (v1) -- (u2);
        \draw (v2) -- (u1);
        \draw (v2) -- (u2);
        \draw (v3) -- (u3);
        \draw (v3) -- (u4);
        \draw (v4) -- (u3);
        \draw (v4) -- (u4);

        \draw (v2) -- (u3);
      \end{tikzpicture}
      \caption*{$H^{(3)}$: Two 4-cycles connected by a single edge.}\label{3case3}
    \end{subfigure}
 \quad
    \begin{subfigure}[t]{2.5in}
      \centering
      \begin{tikzpicture}
        \def\hor{3cm}
        \def\v{-1cm}

        \draw[fill=black] (0*\hor, 0*\v) circle (.1cm) node (v1) {} node[left] {$x_1$};
        \draw[fill=black] (1*\hor, 0*\v) circle (.1cm) node (u1) {} node[right] {$y_1$};
        \draw[fill=black] (0*\hor, 1*\v) circle (.1cm) node (v2) {} node[left] {$x_2$};
        \draw[fill=black] (1*\hor, 1*\v) circle (.1cm) node (u2) {} node[right] {$y_2$};
        \draw[fill=black] (0*\hor, 2*\v) circle (.1cm) node (v3) {} node[left] {$x_3$};
        \draw[fill=black] (1*\hor, 2*\v) circle (.1cm) node (u3) {} node[right] {$y_3$};
        \draw[fill=black] (0*\hor, 3*\v) circle (.1cm) node (v4) {} node[left] {$x_4$};
        \draw[fill=black] (1*\hor, 3*\v) circle (.1cm) node (u4) {} node[right] {$y_4$};

        \draw (v1) -- (u1);
        \draw (v1) -- (u2);
        \draw (v2) -- (u1);
        \draw (v2) -- (u2);
        \draw (v3) -- (u3);
        \draw (v3) -- (u4);
        \draw (v4) -- (u3);
        \draw (v4) -- (u4);

        \draw (v1) -- (u3);
        \draw (v2) -- (u3);
      \end{tikzpicture}
      \caption*{$H^{(4)}$ : Two 4-cycles connected by two adjacent edges.}\label{3case4}
    \end{subfigure} \\
\end{figure}

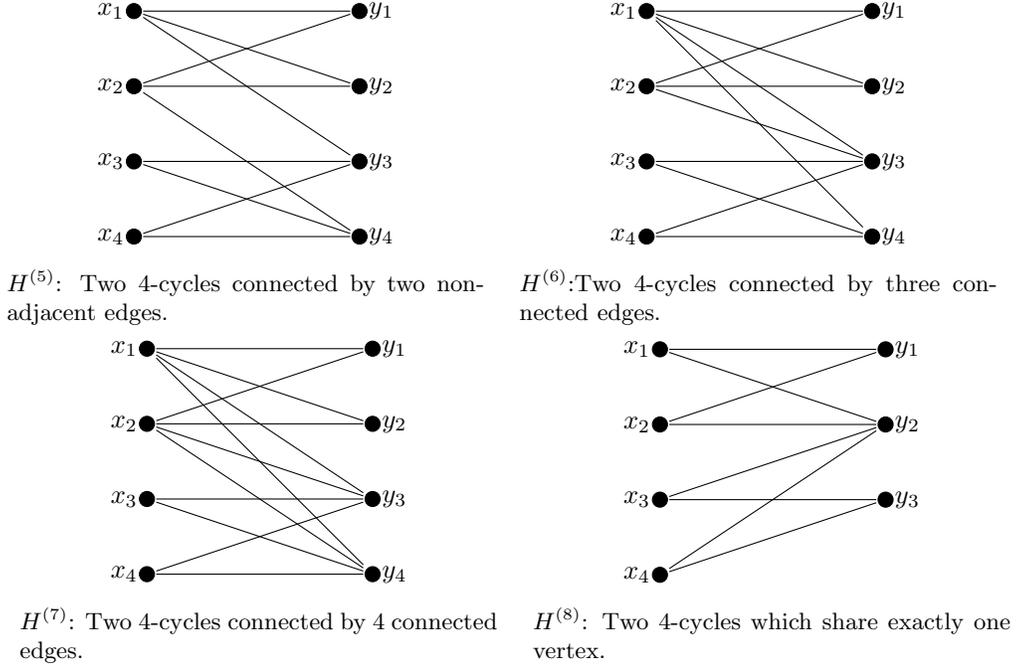
\begin{figure}[H]\ContinuedFloat
    \begin{subfigure}[t]{2.5in}
      \centering
      \begin{tikzpicture}
        \def\hor{3cm}
        \def\v{-1cm}

        \draw[fill=black] (0*\hor, 0*\v) circle (.1cm) node (v1) {} node[left] {$x_1$};
        \draw[fill=black] (1*\hor, 0*\v) circle (.1cm) node (u1) {} node[right] {$y_1$};
        \draw[fill=black] (0*\hor, 1*\v) circle (.1cm) node (v2) {} node[left] {$x_2$};
        \draw[fill=black] (1*\hor, 1*\v) circle (.1cm) node (u2) {} node[right] {$y_2$};
        \draw[fill=black] (0*\hor, 2*\v) circle (.1cm) node (v3) {} node[left] {$x_3$};
        \draw[fill=black] (1*\hor, 2*\v) circle (.1cm) node (u3) {} node[right] {$y_3$};
        \draw[fill=black] (0*\hor, 3*\v) circle (.1cm) node (v4) {} node[left] {$x_4$};
        \draw[fill=black] (1*\hor, 3*\v) circle (.1cm) node (u4) {} node[right] {$y_4$};

        \draw (v1) -- (u1);
        \draw (v1) -- (u2);
        \draw (v2) -- (u1);
        \draw (v2) -- (u2);
        \draw (v3) -- (u3);
        \draw (v3) -- (u4);
        \draw (v4) -- (u3);
        \draw (v4) -- (u4);

        \draw (v1) -- (u3);
        \draw (v2) -- (u4);
      \end{tikzpicture}
      \caption*{$H^{(5)}$: Two 4-cycles connected by two non-adjacent edges.}\label{3case5}
    \end{subfigure}
     \quad
    \begin{subfigure}[t]{2.5in}
      \centering
      \begin{tikzpicture}
        \def\hor{3cm}
        \def\v{-1cm}

        \draw[fill=black] (0*\hor, 0*\v) circle (.1cm) node (v1) {} node[left] {$x_1$};
        \draw[fill=black] (1*\hor, 0*\v) circle (.1cm) node (u1) {} node[right] {$y_1$};
        \draw[fill=black] (0*\hor, 1*\v) circle (.1cm) node (v2) {} node[left] {$x_2$};
        \draw[fill=black] (1*\hor, 1*\v) circle (.1cm) node (u2) {} node[right] {$y_2$};
        \draw[fill=black] (0*\hor, 2*\v) circle (.1cm) node (v3) {} node[left] {$x_3$};
        \draw[fill=black] (1*\hor, 2*\v) circle (.1cm) node (u3) {} node[right] {$y_3$};
        \draw[fill=black] (0*\hor, 3*\v) circle (.1cm) node (v4) {} node[left] {$x_4$};
        \draw[fill=black] (1*\hor, 3*\v) circle (.1cm) node (u4) {} node[right] {$y_4$};

        \draw (v1) -- (u1);
        \draw (v1) -- (u2);
        \draw (v2) -- (u1);
        \draw (v2) -- (u2);
        \draw (v3) -- (u3);
        \draw (v3) -- (u4);
        \draw (v4) -- (u3);
        \draw (v4) -- (u4);

        \draw (v1) -- (u3);
        \draw (v1) -- (u4);
        \draw (v2) -- (u3);
      \end{tikzpicture}
      \caption*{$H^{(6)}$:Two 4-cycles connected by three connected edges.}\label{3case6}
    \end{subfigure} \\
     \quad
    \begin{subfigure}[t]{2.5in}
      \centering
      \begin{tikzpicture}
        \def\hor{3cm}
        \def\v{-1cm}

        \draw[fill=black] (0*\hor, 0*\v) circle (.1cm) node (v1) {} node[left] {$x_1$};
        \draw[fill=black] (1*\hor, 0*\v) circle (.1cm) node (u1) {} node[right] {$y_1$};
        \draw[fill=black] (0*\hor, 1*\v) circle (.1cm) node (v2) {} node[left] {$x_2$};
        \draw[fill=black] (1*\hor, 1*\v) circle (.1cm) node (u2) {} node[right] {$y_2$};
        \draw[fill=black] (0*\hor, 2*\v) circle (.1cm) node (v3) {} node[left] {$x_3$};
        \draw[fill=black] (1*\hor, 2*\v) circle (.1cm) node (u3) {} node[right] {$y_3$};
        \draw[fill=black] (0*\hor, 3*\v) circle (.1cm) node (v4) {} node[left] {$x_4$};
        \draw[fill=black] (1*\hor, 3*\v) circle (.1cm) node (u4) {} node[right] {$y_4$};

        \draw (v1) -- (u1);
        \draw (v1) -- (u2);
        \draw (v2) -- (u1);
        \draw (v2) -- (u2);
        \draw (v3) -- (u3);
        \draw (v3) -- (u4);
        \draw (v4) -- (u3);
        \draw (v4) -- (u4);

        \draw (v1) -- (u3);
        \draw (v1) -- (u4);
        \draw (v2) -- (u3);
        \draw (v2) -- (u4);
      \end{tikzpicture}
      \caption*{$H^{(7)}$: Two 4-cycles connected by 4 connected edges.}\label{3case7}
    \end{subfigure}
         \quad
    \begin{subfigure}[t]{2.5in}
      \centering
      \begin{tikzpicture}
        \def\hor{3cm}
        \def\v{-1cm}

        \draw[fill=black] (0*\hor, 0*\v) circle (.1cm) node (v1) {} node[left] {$x_1$};
        \draw[fill=black] (1*\hor, 0*\v) circle (.1cm) node (u1) {} node[right] {$y_1$};
        \draw[fill=black] (0*\hor, 1*\v) circle (.1cm) node (v2) {} node[left] {$x_2$};
        \draw[fill=black] (1*\hor, 1*\v) circle (.1cm) node (u2) {} node[right] {$y_2$};
        \draw[fill=black] (0*\hor, 2*\v) circle (.1cm) node (v3) {} node[left] {$x_3$};
        \draw[fill=black] (1*\hor, 2*\v) circle (.1cm) node (u3) {} node[right] {$y_3$};
        \draw[fill=black] (0*\hor, 3*\v) circle (.1cm) node (v4) {} node[left] {$x_4$};

        \draw (v1) -- (u1);
        \draw (v1) -- (u2);
        \draw (v2) -- (u1);
        \draw (v2) -- (u2);
        \draw (v3) -- (u3);
        \draw (v3) -- (u2);
        \draw (v4) -- (u3);
        \draw (v4) -- (u2);

      \end{tikzpicture}
      \caption*{$H^{(8)}$: Two 4-cycles which share exactly one vertex.}\label{3case8}
    \end{subfigure} \\
    \caption{Induced subgraphs that are obstructions to satisfying condition $\mathbf{N_2}$}\label{3cases}
  \end{figure}

\begin{lem}\label{N2obstruction1}
   $I_{H^{(1)}}$ and $I_{H^{(8)}}$ do not satisfy condition $\mathbf{N}_2$ and in particular $\beta_{1,4}(I_{H^{(1)}})$ and $ \beta_{1,4}(I_{H^{(8)}}) $ are nonzero.
\end{lem}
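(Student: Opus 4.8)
The plan is to observe that for both of these graphs the toric edge ideal is, exceptionally, a complete intersection generated by two quadrics, so that its minimal free resolution is a two-term Koszul complex in which the degree-four first syzygy is immediately visible. First I would list the edges of each graph and enumerate all of its $4$-cycles. The graph $H^{(1)}$ has exactly two $4$-cycles, on $x_1,y_1,x_2,y_2$ and on $x_2,y_2,x_3,y_3$ (these are the only $X$-pairs, $\{x_1,x_2\}$ and $\{x_2,x_3\}$, with two common neighbours); the graph $H^{(8)}$ also has exactly two $4$-cycles, on the $X$-pairs $\{x_1,x_2\}$ and $\{x_3,x_4\}$. Moreover $H^{(1)}$ has a unique cycle of length $\ge 6$, the $6$-cycle $x_1,y_1,x_2,y_3,x_3,y_2$, and it has the chord $\{x_2,y_2\}$, while $H^{(8)}$ has no cycle of length $\ge 6$ at all. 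Hence condition (1) of Theorem~\ref{OHthm} holds for both graphs, and condition (4) gives that $I_{H^{(1)}}$ and $I_{H^{(8)}}$ are each minimally generated by the two quadratic binomials attached to their $4$-cycles.

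Explicitly, $I_{H^{(1)}}=(q_1,q_2)$ with $q_1=e_{11}e_{22}-e_{12}e_{21}$ and $q_2=e_{22}e_{33}-e_{23}e_{32}$, while $I_{H^{(8)}}=(q_1,q_2)$ with $q_1=e_{11}e_{22}-e_{12}e_{21}$ and $q_2=e_{32}e_{43}-e_{33}e_{42}$. Next I would check that in each case $q_1,q_2$ is a regular sequence on $S$. For $H^{(8)}$ this is immediate, since $q_1$ and $q_2$ involve disjoint sets of variables. For $H^{(1)}$, the quadric $q_1$ is irreducible, so $S/(q_1)$ is a domain, and $q_2\notin(q_1)$ for degree reasons, so $q_2$ is a nonzerodivisor modulo $q_1$. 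Thus in both cases $I_{H^{(\cdot)}}$ is a complete intersection of two quadrics, whose minimal graded free resolution is the Koszul complex
\[ 0 \longrightarrow S(-4) \longrightarrow S(-2)^2 \longrightarrow I_{H^{(\cdot)}} \longrightarrow 0 . \]
In particular $\beta_{1,4}(I_{H^{(1)}})=\beta_{1,4}(I_{H^{(8)}})=1\neq 0$, and since $4>1+2$ this contradicts the Betti-vanishing requirement in the definition of $\mathbf{N}_2$, so neither ideal satisfies $\mathbf{N}_2$.

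The only delicate point, in an otherwise short argument, is the combinatorial bookkeeping: one must make sure the enumeration of $4$-cycles (hence of minimal generators) is exhaustive, so that the first syzygy module is genuinely a submodule of $S^2$ and the Koszul relation $(q_2,-q_1)$ is a true minimal generator of degree $4$ rather than something absorbed by lower-degree syzygies. If one prefers to avoid Theorem~\ref{OHthm}, an alternative is to invoke the Aramova--Herzog formula $\beta_{1,\underline{\alpha}}(I_G)=\dim_k\widetilde{H}_1(\Gamma(\underline{\alpha});k)$ for the multidegree $\underline{\alpha}$ of total degree $8$ supported on every vertex and verify that the corresponding complex $\Gamma(\underline{\alpha})$ is disconnected; but the complete-intersection route is cleaner and manifestly characteristic-free.
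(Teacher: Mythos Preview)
Your proof is correct and follows essentially the same approach as the paper: both arguments observe that $I_{H^{(1)}}$ and $I_{H^{(8)}}$ are complete intersections generated by two quadrics, whence the Koszul complex gives $\beta_{1,4}=1$. You simply fill in details the paper omits---the explicit enumeration of $4$-cycles, the verification of the Ohsugi--Hibi hypothesis, and the regular-sequence check---so this is the paper's argument with the bookkeeping made explicit.
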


\begin{proof} The ideals
 $I_{H^{(1)}}$ and $I_{H^{(8)}}$ are  complete intersections generated by two quadrics. 
 	In particular $\beta_{1,4}(I_{H^{(1)}}) = 1$.
 
\end{proof}

\begin{lem}\label{N2obstruction2}
For each $i = 2,\ldots,7$, $I_{H^{(i)}}$ does not satisfy condition $\mathbf{N}_2$ and in particular $\beta_{1 ,4}(I_{H^{(i)}}) \neq 0$.
\end{lem}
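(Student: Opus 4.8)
The plan is to exhibit, for each graph $H^{(i)}$ with $i = 2,\ldots,7$, an explicit binomial of degree $4$ (i.e. in four of the edge variables) that is a minimal first syzygy of $I_{H^{(i)}}$, which by definition forces $\beta_{1,4}(I_{H^{(i)}}) \neq 0$ and hence violates $\mathbf{N}_2$. Concretely, for each of these graphs I would write down the toric edge ideal $I_{H^{(i)}}$ using the edge variables $e_{a,b}$ indexed by the edges drawn in Figure~\ref{3cases}; the quadratic generators are exactly the binomials $e_{a,b}e_{c,d} - e_{a,d}e_{c,b}$ coming from the induced $4$-cycles $x_a y_b x_c y_d$. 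The first syzygies are then computed directly from these generators. Rather than a case-by-case Gr\"obner computation, I would organize the argument around the combinatorics of how the two $4$-cycles are joined, since $H^{(2)},\ldots,H^{(7)}$ differ only in the number $0,1,2,2,3,4$ of "connecting'' edges between the two disjoint squares (with the two $2$-edge cases distinguished by adjacency).

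The key observation to streamline this is that each $H^{(i)}$ for $i=2,\ldots,7$ contains two induced $4$-cycles whose edge sets are vertex-disjoint as subsets of the edge variable set; call the corresponding quadratic generators $q$ and $q'$. Since $q$ and $q'$ involve disjoint sets of variables, they form a regular sequence inside $S = k[E(H^{(i)})]$, so the Koszul relation $q' \cdot q - q \cdot q' = 0$ is a minimal syzygy provided neither $q$ nor $q'$ lies in the ideal generated by the remaining quadrics together with the other element of the pair — and more to the point, this Koszul syzygy has (multi)degree equal to $\mathrm{mdeg}(q)+\mathrm{mdeg}(q')$, which has total degree $8$, i.e. it sits in $\beta_{1,4}$. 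So the real content is to check that this degree-$4$ syzygy is \emph{minimal}, i.e. not a $k[E]$-linear combination of the linear (degree-$1$-on-generators, i.e. degree-$3$ total) syzygies among the quadric generators. For the graphs $H^{(2)}$ (no connecting edge) and $H^{(3)}$ (one connecting edge), the ideal is literally a complete intersection of the two quadrics $q,q'$ (the single connecting edge variable in $H^{(3)}$ does not appear in any $4$-cycle), so $\beta_{1,4} = 1$ exactly as in Lemma~\ref{N2obstruction1}. For $H^{(4)},\ldots,H^{(7)}$ there are additional $4$-cycles created by the connecting edges, hence additional quadric generators and some genuine linear syzygies, and one must verify the Koszul syzygy on $q,q'$ is not absorbed by them; I would do this by a multidegree bookkeeping argument — the multidegree of the Koszul relation is supported on all eight vertices $x_1,\ldots,x_4,y_1,\ldots,y_4$ with the appropriate multiplicities, and no linear syzygy can have this multidegree because every quadric generator is supported on at most four vertices — or, failing a clean uniform argument, by a direct but short linear-algebra check of $\widetilde{H}_1(\Gamma(\underline\alpha);k)$ via the Aramova--Herzog theorem for the single relevant multidegree $\underline\alpha$ in each case.

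The step I expect to be the main obstacle is precisely this minimality verification for $H^{(4)}$ through $H^{(7)}$: as more connecting edges are added, the number of quadric generators and linear syzygies grows, and one has to be careful that the obvious degree-$4$ element of $\ker$ is not already forced by linear syzygies. I would hedge against this by falling back on Theorem~\ref{InducedGraphBetti} and the Aramova--Herzog description $\beta_{1,\underline\alpha}(I_H) = \dim_k \widetilde H_1(\Gamma(\underline\alpha);k)$: for each $H^{(i)}$ I would identify the multidegree $\underline\alpha$ with $\sum \alpha_\ell = 8$ supported on all vertices, describe the fiber $C_{\underline\alpha}$ of monomials explicitly (there are only a handful of monomials of that multidegree), build the $1$-dimensional simplicial complex $\Gamma(\underline\alpha)$ on those monomials, and simply read off that its reduced $H_1$ is nonzero (it contains a graph cycle that does not bound). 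This reduces the whole lemma to six small, completely explicit homology computations on graphs with at most a dozen vertices, which is routine and characteristic-independent, so the conclusion $\beta_{1,4}(I_{H^{(i)}}) \neq 0$ holds over any field $k$.
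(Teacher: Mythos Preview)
Your fallback plan---computing $\widetilde H_1(\Gamma(\underline\alpha);k)$ via the Aramova--Herzog formula at the all-ones multidegree $\underline\alpha$---is correct and is exactly what the paper does. What you miss is the single observation that collapses all six cases into one: in multidegree $\underline\alpha = (1,\dots,1)$ the monomials in $C_{\underline\alpha}$ are precisely the perfect matchings of $H^{(i)}$, and \emph{no perfect matching can use any connecting edge} $e_{13},e_{14},e_{23},e_{24}$. The reason is that $y_1$ and $y_2$ are adjacent only to $x_1$ and $x_2$ in every $H^{(i)}$, so any perfect matching must pair $\{y_1,y_2\}$ with $\{x_1,x_2\}$, which then forces $\{y_3,y_4\}$ to pair with $\{x_3,x_4\}$. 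Hence $C_{\underline\alpha}$, and therefore $\Gamma(\underline\alpha)$, is literally identical for $i=2,\dots,7$; one computes $\widetilde H_1$ once (the complex has four tetrahedral facets and retracts onto a circle) and is done. Your proposal of six separate checks works but is unnecessary.

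Two smaller points. First, your ``multidegree bookkeeping'' argument for minimality is not right as stated: the question is whether the Koszul syzygy lies in the $S$-span of the linear syzygies, so you must rule out $S$-linear combinations $\sum_j \ell_j \sigma_j$ with $\ell_j$ linear and $\sigma_j$ a linear syzygy, and such a combination can certainly be supported on all eight vertices. You implicitly acknowledge this by providing the homology fallback, so there is no actual gap, but the bookkeeping line should be dropped. Second, $\Gamma(\underline\alpha)$ is not $1$-dimensional: its vertices are the edge variables $e_{ij}$ and its faces are the supports of monomials in $C_{\underline\alpha}$, so here the facets are $3$-simplices (the four squarefree degree-$4$ monomials), and it is the $3$-dimensional complex of four tetrahedra glued along their boundaries whose $\widetilde H_1$ you are computing.
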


\begin{proof}
Let $E^{(i)}$ be the edge set of $H^{(i)}$. Let $e_{ij}$ denote the edge $\{x_i, y_j\}$. Let $\underline{\alpha}$ be the multidegree $(1, 1, 1, 1, 1, 1, 1, 1)$. In each $k[E^{(i)}]$, the monomials in multidegree $\underline{\alpha}$ correspond to perfect matchings in $H^{(i)}$. Note that perfect matchings in $H^{(i)}$ for $i \geq 3$ cannot contain any of the edges $e_{13}, e_{14}, e_{23},$ and $e_{24}$, so the perfect matchings of $H^{(i)}$ are precisely the same as those of $H^{(2)}$. It follows that in each of these graphs, 
\[C_{\underline{\alpha}} = \left\{e_{11}e_{22}e_{33}e_{44},\; e_{11}e_{22}e_{34}e_{43},\; e_{12}e_{21}e_{33}e_{44},\; e_{12}e_{21}e_{34}e_{43}\right\},\]
so $\Gamma(\underline{\alpha})$ has  facets
\[\left\{\{e_{11},e_{22},e_{33},e_{44}\},\; \{e_{11},e_{22},e_{34},e_{43}\},\; \{e_{12},e_{21},e_{33},e_{44}\},\; \{e_{12},e_{21},e_{34},e_{43}\}\right\}.\]

\begin{figure}[H]
\centering

\begin{tikzpicture}
\def\size{2cm}
\def\hor{\size}
\def\v{-1*\size}

\path [fill=red!60, opacity=.5] (0, 0) -- (1*\hor, 1*\v) -- (1*\hor, 2*\v) -- (0, 3*\v) -- (0, 0);
\path [fill=blue!30] (0, 0) -- (1*\hor, 1*\v) -- (2*\hor, 1*\v) -- (3*\hor, 0) -- (0, 0);
\path [fill=green!60] (3*\hor, 0) -- (2*\hor, 1*\v) -- (2*\hor, 2*\v) -- (3*\hor, 3*\v) -- (3*\hor, 0);
\path [fill=yellow!60] (3*\hor, 3*\v) -- (0, 3*\v) -- (1*\hor, 2*\v) -- (2*\hor, 2*\v) -- (3*\hor, 3*\v);
\draw (0, 0) -- (0, 3*\v);
\draw (0, 0) -- (3*\hor, 0);
\draw (0, 0) -- (1*\hor, 1*\v);
\draw (0, 0) -- (2*\hor, 1*\v);
\draw (0, 3*\v) -- (3*\hor, 3*\v);
\draw [dashed] (0, 3*\v) -- (1*\hor, 1*\v);
\draw (0, 3*\v) -- (1*\hor, 2*\v);
\draw (3*\hor, 3*\v) -- (1*\hor, 2*\v);
\draw (3*\hor, 3*\v) -- (2*\hor, 2*\v);
\draw (3*\hor, 3*\v) -- (3*\hor, 0);
\draw (3*\hor, 0) -- (2*\hor, 1*\v);
\draw [dashed] (3*\hor, 0) -- (2*\hor, 2*\v);
\draw (1*\hor, 1*\v) -- (2*\hor, 1*\v);
\draw (1*\hor, 1*\v) -- (1*\hor, 2*\v);
\draw (2*\hor, 1*\v) -- (2*\hor, 2*\v);
\draw (1*\hor, 2*\v) -- (2*\hor, 2*\v);
\draw [dashed] (1*\hor, 1*\v) -- (3*\hor, 0);
\draw (0, 0) -- (1*\hor, 2*\v);
\draw [dashed] (0, 3*\v) -- (2*\hor, 2*\v);
\draw (3*\hor, 3*\v) -- (2*\hor, 1*\v);

        \draw[fill=black] (0*\hor, 0*\v) circle (.1cm) node (v1) {} node[left] {$e_{11}$};
                \draw[fill=black] (3*\hor, 0*\v) circle (.1cm) node (v2) {} node[right] { \,$e_{44}$};
        \draw[fill=black] (0*\hor, 3*\v) circle (.1cm) node (v3) {} node[left] {$e_{43}$};
                \draw[fill=black] (3*\hor, 3*\v) circle (.1cm) node (v4) {} node[right] { \,$e_{21}$};

 \draw[fill=black] (1*\hor, 1*\v) circle (.1cm) node (v1) {} node[below right] {$e_{22}$};
                \draw[fill=black] (2*\hor, 1*\v) circle (.1cm) node (v2) {} node[below left] { \,$e_{33}$};
        \draw[fill=black] (1*\hor, 2*\v) circle (.1cm) node (v3) {} node[above right] {$e_{34}$};
                \draw[fill=black] (2*\hor, 2*\v) circle (.1cm) node (v4) {} node[above left] { \,$e_{12}$};

\end{tikzpicture}
\caption{A visualization of $\Gamma(\underline{\alpha})$ in which each shaded tetrahedron represents a  facet.}
\end{figure}

\noindent The geometric realization of the abstract simplicial complex is $\Gamma(\underline{\alpha})$ contracts to a circle.  Thus  $\beta_{1, \underline{\alpha}}(I_{H^{(i)}}) = \dim_k \widetilde{H}_1(\Gamma(\underline{\alpha});k) = 1$.  By Theorem~\ref{InducedGraphBetti}, $\beta_{1,4}(I_{H^{(i)}}) \neq 0$ for $i = 2,\ldots,7$.

\end{proof}



The following graph is the main obstruction to satisfying condition $\mathbf{N}_3$.

\begin{lem}\label{N3obstruction}
Let $H$ be the bipartite graph pictured below.
 \begin{figure}[H]
    \centering

      \begin{tikzpicture}
        \def\hor{3cm}
        \def\v{-1cm}
        
        \draw[fill=black] (0*\hor, 0*\v) circle (.1cm) node (v1) {} node[left] {$x_1$};
        \draw[fill=black] (1*\hor, 0*\v) circle (.1cm) node (u1) {} node[right] {$y_1$};
        \draw[fill=black] (0*\hor, 1*\v) circle (.1cm) node (v2) {} node[left] {$x_2$};
        \draw[fill=black] (1*\hor, 1*\v) circle (.1cm) node (u2) {} node[right] {$y_2$};
        \draw[fill=black] (0*\hor, 2*\v) circle (.1cm) node (v3) {} node[left] {$x_3$};
        \draw[fill=black] (1*\hor, 2*\v) circle (.1cm) node (u3) {} node[right] {$y_3$};

        \draw (v1) -- (u1);
                \draw (v1) -- (u2);

        \draw (v2) -- (u1);
        \draw (v2) -- (u2);
                \draw (v2) -- (u1);
        \draw (v2) -- (u3);

        \draw (v3) -- (u1);
        \draw (v1) -- (u3);
        \draw (v3) -- (u2);
      \end{tikzpicture}
 
     \end{figure}
Then $I_H$ is Gorenstein and has graded Betti table:\\
\begin{center}
\begin{tabular}{c|ccc}
      &$0$&$1$&$2$\\\hline
      \textrm{$2\!:$}&$5$&$5$&\text{-}\\\text{$3\!:$}&\text{-}&\text{-}&$1$\\\end{tabular}
      \end{center}
      In particular, $\beta_{2,5}(I_H) \neq 0$ and so $I_H$ does not satisfy condition $N_3$.
\end{lem}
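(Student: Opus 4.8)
The plan is to recognize $H$ as the complete bipartite graph $K_{3,3}$ with the single edge $\{x_3,y_3\}$ deleted, to present $I_H$ explicitly as the ideal of maximal Pfaffians of a $5\times 5$ alternating matrix of linear forms, and then to read off the resolution from the Buchsbaum--Eisenbud structure theorem for grade-three Gorenstein ideals.

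First I would pin down the generators. The graph $H$ is connected and bipartite with $8$ edges, so $S$ is a polynomial ring in the $8$ variables $e_{ij}$ (all pairs except $e_{33}$). Every cycle in $H$ has length $4$ or $6$; a $6$-cycle runs through all six vertices and hence omits $\{x_3,y_3\}$, which is then a chord of it, so by Theorem~\ref{OHthm} the ideal $I_H$ is minimally generated by the binomials of the $4$-cycles of $H$. There are exactly five $4$-cycles of $K_{3,3}$ avoiding $\{x_3,y_3\}$, and their binomials are the five $2\times 2$ minors
\[
e_{11}e_{22}-e_{12}e_{21},\ \ e_{11}e_{23}-e_{13}e_{21},\ \ e_{12}e_{23}-e_{13}e_{22},\ \ e_{11}e_{32}-e_{12}e_{31},\ \ e_{21}e_{32}-e_{22}e_{31}
\]
of $\left(\begin{smallmatrix} e_{11}&e_{12}&e_{13}\\ e_{21}&e_{22}&e_{23}\\ e_{31}&e_{32}& \end{smallmatrix}\right)$ that do not involve the missing corner. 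Since $H$ is connected and bipartite, $k[H]=S/I_H$ is Cohen--Macaulay of Krull dimension $|V(H)|-1=5$ by \cite[Corollary 5.26]{HHO}, so $\grade(I_H)=\operatorname{codim} I_H=8-5=3$.

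The key step is to exhibit an alternating matrix realizing $I_H$ as Pfaffians. I would use the matrix
\[
M=\begin{pmatrix}
0 & e_{22} & -e_{32} & 0 & -e_{12}\\
-e_{22} & 0 & e_{23} & e_{21} & 0\\
e_{32} & -e_{23} & 0 & -e_{31} & e_{13}\\
0 & -e_{21} & e_{31} & 0 & e_{11}\\
e_{12} & 0 & -e_{13} & -e_{11} & 0
\end{pmatrix},
\]
for which a direct computation shows that the five maximal ($4\times4$) Pfaffians, where $\mathrm{Pf}_i$ is obtained by deleting the $i$-th row and column, are exactly the five generators above, up to sign; hence $I_H=\mathrm{Pf}_4(M)$. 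Producing such an $M$ is the only real obstacle, and the way I would arrive at it is by matching, for each variable $e_{ij}$, the set of generators containing it against the set of maximal Pfaffians involving a given off-diagonal slot of a $5\times 5$ alternating matrix; this combinatorial bookkeeping essentially forces the displayed placement (each of the eight variables occurs exactly once among the ten above-diagonal slots, the remaining two being zero), after which the verification of the Pfaffians is routine.

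Finally, since $\grade(I_H)=3$ and every entry of $M$ lies in the maximal ideal of $S$, the Buchsbaum--Eisenbud structure theorem shows that $S/I_H$ is Gorenstein and that its minimal graded free resolution is the self-dual Pfaffian complex
\[
0\longrightarrow S(-5)\longrightarrow S(-3)^{5}\xrightarrow{\ M\ }S(-2)^{5}\longrightarrow S\longrightarrow S/I_H\longrightarrow 0 .
\]
Reading off graded Betti numbers yields the stated table, and in particular $\beta_{2,5}(I_H)=1\neq 0$. Since $2<3$ while $5>2+2$, this shows $I_H$ fails condition $\mathbf N_3$. (As a sanity check, $H$ is a one-sided ladder and the codimension and Cohen--Macaulayness of its $2\times2$ ladder determinantal ideal agree with the above; one may alternatively confirm the Betti table by a direct computation, which also shows it is characteristic-free, consistent with the Pfaffian description.)
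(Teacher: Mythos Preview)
Your proof is correct and follows essentially the same approach as the paper: present $I_H$ as the ideal of $4\times 4$ Pfaffians of an explicit $5\times 5$ alternating matrix of linear forms and invoke the Buchsbaum--Eisenbud structure theorem to conclude Gorensteinness and read off the Betti table. Your write-up is in fact more detailed than the paper's (you explicitly list the five $4$-cycle binomials, compute the grade, and explain how the matrix is found), while the paper simply writes down its own alternating matrix and says ``it is easy to check.'' Your matrix differs from the paper's but yields the same Pfaffian ideal, which is expected since such a presentation is not unique.

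One small slip: in arguing that every $6$-cycle of $H$ has a chord you write that a $6$-cycle ``omits $\{x_3,y_3\}$, which is then a chord of it,'' but $\{x_3,y_3\}$ is not an edge of $H$ and hence cannot be a chord. The correct argument is that a $6$-cycle uses all six vertices and hence exactly six of the eight edges of $H$; the two remaining edges of $H$ are then chords. This does not affect the rest of your proof.
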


\begin{proof} 
It is easy to check that $I_H$ is generated by the $4 \times 4$ Pfaffians of the $5 \times 5$ alternating matrix:
\[\mathbf{M} = \begin{pmatrix} 
 0 & e_{13} & e_{23} & e_{31}& e_{32}\\
-e_{13} & 0 & 0 & e_{11} & e_{12}\\
-e_{23} & 0 & 0 & e_{21}& e_{22}\\
-e_{31} & -e_{11} & -e_{21}& 0 & 0\\
-e_{32} & -e_{12}& -e_{22} & 0 & 0
\end{pmatrix}.\]
By \cite[Theorem 2.1]{BE}, it follows that $I_H$ is a Gorenstein, height $3$ ideal.  The claim follows from the symmetry of resolutions of Gorenstein ideals.
\end{proof}

\begin{lem}\label{N4obstruction}

        



 
 
The ideal $I_{K_{3,3}}$ is Gorenstein and has graded Betti table:\\
\begin{center}
\begin{tabular}{c|ccccc}
      &$0$&$1$&$2$&$3$&\\ \hline
      \text{$2\!:$}&$9$&$16$&$9$&\text{-}\\\text{$3\!:$}&\text{-}&\text{-}&\text{-}&$1$\\\end{tabular}
      \end{center}
      In particular, $\beta_{3,6}(I_{K_{3,3}}) \neq 0$ and so $I_{K_{3,3}}$ does not satisfy condition $\mathbf{N}_4$.
\end{lem}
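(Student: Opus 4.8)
The plan is to recognize $I_{K_{3,3}}$ as a classical generic determinantal ideal and to read off its (well-understood, characteristic-free) minimal resolution. First I would note that since $\pi$ sends $e_{i,j}\mapsto x_iy_j$, the generators $e_{i,j}e_{k,l}-e_{i,l}e_{k,j}$ of $I_{K_{3,3}}$ are precisely the $2\times 2$ minors of the generic $3\times 3$ matrix $\mathbf{X}=(e_{i,j})_{1\le i,j\le 3}$, so that $S/I_{K_{3,3}}$ is the homogeneous coordinate ring of the Segre embedding $\P^2_k\times\P^2_k\into\P^8_k$. These are the submaximal minors of a square matrix, so I would invoke the classical structure theorem for such ideals (the Gulliksen--Neg{\aa}rd complex, which is a resolution over $\Z$): $S/I_{K_{3,3}}$ is perfect of codimension $4$ with Gorenstein quotient, and its resolution has total Betti numbers $1,9,16,9,1$. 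Tracking internal degrees — the nine minors are quadrics, and self-duality pins the socle twist at $S(-6)$ — yields
\[
0\to S(-6)\to S(-4)^{9}\to S(-3)^{16}\to S(-2)^{9}\to S\to S/I_{K_{3,3}}\to 0,
\]
which is exactly the asserted Betti table after applying $\beta_{i,j}(S/I_{K_{3,3}})=\beta_{i-1,j}(I_{K_{3,3}})$. (In characteristic $0$ this also follows from \cite{L, PW, R}; and \cite{HK} guarantees no characteristic dependence here because $\min\{3,3\}\le 4$.)

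I would back this up with a Hilbert-series check that also makes the table essentially forced. Since $(S/I_{K_{3,3}})_d\cong k[x_1,x_2,x_3]_d\otimes_k k[y_1,y_2,y_3]_d$, the numerator of the Hilbert series is
\[
K(t)=(1-t)^{9}\sum_{d\ge 0}\binom{d+2}{2}^{2}t^{d}=1-9t^{2}+16t^{3}-9t^{4}+t^{6}.
\]
Knowing that $I_{K_{3,3}}$ is generated by $9$ quadrics (Theorem~\ref{OHthm}, since every $6$-cycle of $K_{3,3}$ has a chord) and that $S/I_{K_{3,3}}$ is Gorenstein of codimension $4$ — hence Cohen--Macaulay with a self-dual length-$4$ resolution, $\beta_{i,j}(S/I_{K_{3,3}})=\beta_{4-i,\,6-j}(S/I_{K_{3,3}})$ — the requirement that the alternating sum of graded ranks equal $K(t)$ forces $\beta_{3,4}=9$ and $\beta_{4,6}=1$ and kills every other entry, recovering the displayed resolution. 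If a fully self-contained proof of Gorenstein-ness is wanted, I would compute the canonical module directly from $\omega_{\P^2\times\P^2}=\mathcal O(-3,-3)$, getting $\omega_{S/I_{K_{3,3}}}\cong (S/I_{K_{3,3}})(-3)$.

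Finally I would conclude: $\beta_{3,6}(I_{K_{3,3}})=\beta_{4,6}(S/I_{K_{3,3}})=1\neq 0$, and since $6>3+2$ this violates the vanishing needed for $\mathbf{N}_4$, so $I_{K_{3,3}}$ fails $\mathbf{N}_4$; the same table gives $\reg(S/I_{K_{3,3}})=2$, so $I_{K_{3,3}}$ does satisfy $\mathbf{N}_3$, in agreement with Theorem~\ref{main2}. The step most in need of care is the grading bookkeeping: one must make sure the inevitable Gorenstein ``corner'' of the self-dual resolution lands in homological degree $4$ and internal degree $6$ (equivalently that the canonical twist is exactly $S(-6)$), because it is this lone nonlinear last syzygy — rather than a longer linear strand — that obstructs $\mathbf{N}_4$.
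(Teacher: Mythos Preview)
Your proposal is correct and follows essentially the same approach as the paper: identify $I_{K_{3,3}}$ with the ideal of $2\times 2$ minors of a generic $3\times 3$ matrix and invoke the Gulliksen--Neg{\aa}rd resolution \cite{GN} to read off the Gorenstein Betti table. Your additional Hilbert-series and canonical-module checks are sound supplementary verifications, but the paper's proof is just the one-line citation of \cite{GN}.
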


\begin{proof} The ideal $I_{K_{3,3}}$ is generated by the $2 \times 2$ minors of a generic $3 \times 3$ matrix of linear forms and so is Gorenstein and has the above resolution by \cite{GN}.
\end{proof}

\section{A Graph-theoretic Result}\label{graph}

This section contains a purely combinatorial characterization of the types of graphs which we show in the following section define linearly presented toric edge ideals.  We show that trees of diameter at most 3 can be characterized locally by the absence of certain induced subgraphs on 4 vertices.  We then show that a bipartite graph with minimum vertex degree at least 2 such that every cycle of size 6 or greater has a chord and such that its bipartite complement is a tree of diameter at most 3 can also be characterized by the absence of 8 particular graphs on at most 8 vertices.

\begin{prop}\label{tree3}
Let $G$ be a graph.  Then $G$ is a tree of diameter at most 3 if and only if every induced subgraph is essentially connected and has no cycles.
\end{prop}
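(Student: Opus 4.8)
The plan is to pin down the single four-vertex graph that controls the statement, namely $2K_2$, the disjoint union of two edges, and to reduce the proposition to the assertion that, among forests, being induced-$2K_2$-free is the same as having diameter at most $3$. Throughout I will use the convention that a graph with at most one non-isolated vertex (in particular the empty graph) counts as connected, so that ``essentially connected'' only bites once two independent edges are present; without such a convention the biconditional already fails for stars.

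First I would prove the structural lemma: \emph{a forest $F$ has no induced $2K_2$ if and only if $F_1$ is empty or a tree of diameter at most $3$}. For ``if'', a tree of diameter at most $3$ is a single vertex, an edge, a star, or a double star (two adjacent centers each carrying pendant leaves), and in each case any two edges share a vertex or their four endpoints span a $P_4$, so no induced $2K_2$ occurs; adjoining isolated vertices changes nothing. For ``only if'', observe that if two distinct components of $F$ each contain an edge then those two edges form an induced $2K_2$, so $F_1$ is connected, hence a tree; and if that tree had diameter $\geq 4$, a shortest path $v_0v_1v_2v_3v_4$ is induced (a geodesic has no chords), so $\{v_0,v_1\}$ and $\{v_3,v_4\}$ span an induced $2K_2$, a contradiction.

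The proposition then follows quickly. Note that $2K_2$ is acyclic but, having no isolated vertex and two components, is \emph{not} essentially connected. For the forward implication, suppose every induced subgraph of $G$ is essentially connected and acyclic. Taking the induced subgraph to be $G$ itself shows $G$ is a forest; and $G$ has no induced $2K_2$, since such a subgraph would violate ``essentially connected''. By the lemma, $G_1$ is empty or a tree of diameter at most $3$, i.e.\ $G$ is (essentially) a tree of diameter at most $3$. Conversely, if $G$ is a tree of diameter at most $3$, then every induced subgraph $H$ is a forest (hence acyclic), and by the lemma $G$ is $2K_2$-free, so $H$ is $2K_2$-free as well; applying the lemma to $H$ shows $H_1$ is empty or a tree, in particular connected, so $H$ is essentially connected.

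The one step needing genuine care is the ``only if'' half of the lemma: identifying $2K_2$ as the right obstruction and checking that in a forest a geodesic between two vertices at distance at least $4$ is an induced $P_5$, which plainly contains an induced $2K_2$. I would also be explicit about the degenerate cases — the empty graph and edgeless graphs on several vertices — since these are the only reason the proposition is best read with ``essentially a tree'' in place of ``tree''.
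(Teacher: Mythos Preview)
Your proof is correct and takes a genuinely different route from the paper. The paper proves the forward direction ($G$ a tree of diameter at most $3$ implies every induced subgraph is essentially connected and acyclic) by using the structural fact that such a tree has at most two ``center'' vertices $v_1,v_2$ incident to every edge, and then does a short case split on which of $v_1,v_2$ lie in the induced subgraph; its converse is dispatched in a single sentence that does not actually explain why the diameter must be at most $3$. Your approach instead isolates the single forbidden configuration $2K_2$ and proves the clean structural lemma that a forest is induced-$2K_2$-free if and only if its positive-degree part is empty or a tree of diameter at most $3$, then applies this lemma symmetrically to both implications. Your argument is slightly longer but more conceptual: it names the obstruction explicitly, handles both directions uniformly, and makes the degenerate cases visible (you are right that the statement really wants ``essentially a tree'' to survive isolated vertices). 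It is also worth noting that the paper implicitly relies on exactly your $2K_2$ observation in the very next theorem, whose proof opens with ``it follows from the previous proposition that $(\bar{G})_1$ contains a $4$-cycle or two nonadjacent edges''---so your formulation is in fact closer to how the proposition is actually used downstream.
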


\begin{proof}  Suppose $G$ is a tree of diameter at most $3$.  Since $G$ has no cycles, clearly the same is true for any induced subgraph.  Since the diameter of $G$ is at most $3$, either $G$ has no edges or there exist two vertices $v_1, v_2$ such that every edge of $G$ is incident to $v_1$ or $v_2$.  Let $H$ be an induced subgraph of $G$.  If $H$ contains neither $v_1$ nor $v_2$, then $H$ contains no edges and so is essentially connected.  If $H$ contains exactly one of these vertices, say $v_1$ but not $v_2$, then $H_1$ consists only of edges incident to $v_1$ and isolated vertices, in which case $H$ is also essentially connected.  Finally if $H$ contains both $v_1$ and $v_2$, then all edges of $H$ are incident to  $v_1$ or $v_2$ and so $H$ is essentially connected.

The converse follows easily since $G$ is connected and has no cycles by assumption.
\end{proof}

Note that there is no similar statement for trees of diameter at most $4$.  Indeed, consider a path graph with $4$ edges and $5$ vertices $v_1,\ldots,v_5$.  Then the induced subgraph on vertex set $\{v_1, v_2, v_4, v_5\}$ is not essentially connected.

The following result is our main combinatorial result that allows us to take local obstructions in the form of forbidden induced subgraphs and translate them into a global statement about certain bipartite graphs.

\begin{thm}\label{GraphConditions}
Let $G$ be a bipartite graph with $\delta(G) \geq 2$, such that each cycle of length $\geq 6$ has a chord and $(\overline{G})_1$ is not a tree of diameter at most 3. Then $G$ contains $H^{(i)}$ for some $1 \leq i \leq 8$ as an induced subgraph.
\end{thm}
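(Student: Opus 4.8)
The plan is to argue contrapositively in terms of structure: assume $G$ is bipartite with $\delta(G) \ge 2$, that every cycle of length $\ge 6$ has a chord, and that none of the eight graphs $H^{(1)},\ldots,H^{(8)}$ occurs as an induced subgraph of $G$; I will then show $(\bar{G})_1$ is a tree of diameter at most $3$. The first move is to apply Proposition~\ref{tree3} to the graph $(\bar{G})_1$: it suffices to prove that every induced subgraph of $\bar{G}$ is essentially connected and acyclic. Translating back through the bipartite-complement involution, a $4$-cycle in $\bar{G}$ on vertices $x_i,x_j \in X$, $y_k,y_\ell \in Y$ corresponds to a pair of ``non-edges'' forming the complement pattern, i.e. to a specific induced subgraph of $G$ on those four vertices; and disconnectedness of an induced subgraph of $\bar{G}$ corresponds to a ``complete bipartite block'' pattern among the relevant $X$- and $Y$-vertices of $G$. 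So the whole theorem reduces to: (a) ruling out induced $4$-cycles in $\bar{G}$, and (b) ruling out induced subgraphs of $\bar{G}$ that split as a disjoint union with edges on both sides.

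For step (a), suppose $\bar{G}$ has an induced $4$-cycle $C$ on two $X$-vertices and two $Y$-vertices. In $G$, these four vertices induce the ``complementary'' configuration (two edges among the four possible, arranged as a perfect matching of the four, or as two adjacent edges, depending on orientation). Using $\delta(G)\ge 2$, each of these four vertices has at least one further neighbor outside the four; I will use the chord condition on long cycles together with a counting/pigeonhole argument to locate two more $4$-cycles in $G$ attached to this core, producing one of $H^{(1)}$–$H^{(8)}$ as an induced subgraph. The key point is that $\delta(G)\ge 2$ forces the two ``missing'' adjacencies (from $\bar{G}$'s $4$-cycle) to be compensated by edges elsewhere, and once two $4$-cycles of $G$ appear in the neighborhood of this configuration, the eight pictures exhaust all ways two $4$-cycles can interact (share an edge, share a vertex, be disjoint, or be joined by $1,2,3,4$ edges), so we land in the forbidden list.

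For step (b), I would argue that if some induced subgraph $H$ of $\bar G$ is not essentially connected, then after discarding isolated vertices $H_1$ splits into two nonempty parts, each containing an edge of $\bar{G}$. An edge of $\bar{G}$ is a non-edge of $G$; having an edge in each part together with the bipartition means there are two $X$-vertices and two $Y$-vertices on each side with the appropriate non-adjacency, and crucially the cross pairs between the two parts are all edges of $G$ (that is what disconnectedness in $\bar{G}$ means). Combined with $\delta(G) \ge 2$ and the chord condition, this again yields two disjoint or loosely-connected $4$-cycles in $G$: one built from each part's non-edge plus compensating edges, with the cross-edges providing (or failing to provide) connections — exactly the $H^{(2)},\ldots,H^{(7)}$ patterns. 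As with (a), the chord hypothesis is what prevents these $4$-cycles from being ``absorbed'' into a longer chordless cycle and forces them to sit as honest induced $4$-cycles.

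\textbf{Main obstacle.} The hard part will be the bookkeeping in steps (a) and (b): given the core configuration forced in $G$, one must carefully use $\delta(G) \ge 2$ to pick the extra edges, then use the length-$\ge 6$ chord condition to show that the resulting vertex set induces \emph{exactly} one of the eight graphs (no extra edges that would destroy ``inducedness,'' no accidental long chordless cycle). I expect this to require a moderately involved case analysis on how many of the compensating neighbors coincide and on which side of the bipartition they lie; organizing it so the eight cases of Figure~\ref{3cases} appear cleanly, rather than as an unstructured pile of subcases, is the real work. A secondary subtlety is handling the isolated vertices correctly when passing between $G$, $\bar G$, and $(\bar G)_1$, since a vertex isolated in $\bar G$ is a vertex of full degree in $G$, and one must make sure the diameter-$\le 3$ conclusion for $(\bar G)_1$ is not spoiled by such vertices.
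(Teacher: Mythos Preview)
Your high-level strategy matches the paper's: invoke Proposition~\ref{tree3} so that failure of $(\bar G)_1$ to be a tree of diameter $\le 3$ forces either a $4$-cycle in $\bar G$ or an induced subgraph of $\bar G$ with two edges in different components, and then run a case analysis in $G$ using $\delta(G)\ge 2$ and the chord hypothesis to manufacture one of $H^{(1)},\ldots,H^{(8)}$. So the architecture is right.

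However, your step~(a) rests on a mistaken translation. A $4$-cycle in $\bar G$ on vertices $x_1,x_2\in X$ and $y_1,y_2\in Y$ uses \emph{all four} of the bipartite edges $\{x_i,y_j\}$; its bipartite complement is therefore the edgeless graph on those four vertices, not ``two edges arranged as a perfect matching or two adjacent edges.'' (You may be thinking of the ordinary complement inside $K_4$, which is irrelevant here.) Consequently your plan for~(a) --- finding compensating edges and two nearby $4$-cycles in $G$ around a $2$-edge core --- starts from the wrong picture. In the paper's Case~4 the point is precisely that $x_1,x_2,y_1,y_2$ span no $G$-edges at all, so one must first produce a path in $G$ from $x_1$ to $x_2$, branch on its length, and only then locate $4$-cycles. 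Similarly, in step~(b) the phrase ``two $X$-vertices and two $Y$-vertices on each side'' overcounts: two nonadjacent $\bar G$-edges give exactly one $X$-vertex and one $Y$-vertex per component, and the cross-pairs being $G$-edges is the content of the paper's Case~5 setup.

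Beyond this translation error, be aware that the paper does not attack Cases~4 and~5 cold: it first establishes three auxiliary cases (G disconnected; $G$ has a bridge; $G$ contains an induced path on six vertices), each yielding some $H^{(i)}$ directly, and then repeatedly reduces subcases of~4 and~5 to these. Without such intermediate reductions the case analysis balloons, so you should expect to need analogous lemmas even after correcting the complement computation.
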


\begin{proof} It follows from the previous proposition that $(\bar{G})_1$ contains a $4$-cycle or two nonadjacent edges.  Before handling these two cases (Cases 4 and 5 below), we first prove intermediate cases.\\
\noindent\textbf{Case 1:} \emph{$G$ is disconnected.}

Because $G$ has $\delta(G) \geq 2$, each connected component must have a cycle. Since each cycle of length $\geq 6$ has a chord, each connected component must have a 4-cycle. Taking the induced subgraph on two four cycles from distinct connected components will then yield $H^{(2)}$.

\noindent\textbf{Case 2:} \emph{$G$ has a bridge.}

Removing the bridge results in a graph with two connected components, each of which has at most one vertex of degree 1 and all other vertices of degree at least 2. Then each connected component must have a 4-cycle, so the graph contains an induced copy of $H^{(2)}$.

\noindent\textbf{Case 3:} \emph{$G$ has a path with 5 edges as an induced subgraph.}

Denote the induced path by $v_1, v_2, v_3, v_4, v_5, v_6$. We emphasize that this path is induced, so there can not be any edges $\{v_i, v_j\}$ for $\abs{i - j} \geq 2$ in the entire graph $G$. First we assume that there is a second path $v_1, w_1, w_2, v_6$ where the $w_i$ are distinct from the $v_j$. These two paths form an 8-cycle.

\hfil
\begin{tikzpicture}
\foreach \n in {1, ..., 6}
	\draw[fill=black] (\n, 0) circle (.075cm) node (x\n) {} node[above] {$v_{\n}$};
\foreach \n in {1, ..., 5}
	\draw (\n, 0) -- ({\n + 1}, 0);
\draw[fill=black] (3, -1) circle (.075cm) node (y1) {} node[below] {$w_1$};
\draw[fill=black] (4,-1) circle (.075cm) node (y2) {} node[below] {$w_2$};
\draw (x1) -- (y1) -- (y2) -- (x6);
\draw[dashed] (x2) -- (y2);
\draw[dashed] (x3) -- (y1);
\draw[dashed] (x4) -- (y2);
\draw[dashed] (x5) -- (y1);
\end{tikzpicture}

Because $G$ is bipartite, there are four possible chords in the 8-cycle: $\{v_2, w_2\}, \{v_3, w_1\}, \{v_4, w_2\}, $ and $\{v_5, w_1\}$. If both of the chords $\{v_2, w_2\}$ and $\{v_5, w_1\}$ are present, then the induced subgraph on the vertices $\{v_1, v_2, v_5, v_6, w_1, w_2\}$ is $H^{(1)}$. So we may assume w.l.o.g. that $\{v_2, w_2\}$ is not present. In this case, the chord $\{v_3, w_1\}$ must be present, as must at least one more chord. If the chord $\{v_4, w_2\}$ is present, then the induced subgraph on $\{v_1, v_2, v_3, v_4, w_1, w_2\}$ is $H^{(1)}$. Otherwise, the chord $\{v_5, w_1\}$ is present, in which case the induced subgraph on $\{v_1, v_2, v_3, v_4, v_5, w_1\}$ is $H^{(1)}$. 

Now we may suppose there is no such $w_1$ and $w_2$. Because the edge $\{v_1, v_2\}$ is not a bridge, there is some path from $v_1$ to $v_6$ which avoids it. Consider such a path $w_1, w_2, \hdots, w_m$ of minimal length. The union of the original path with this new path must have cycle containing both of the edges $\{v_1, v_2\}$ and $\{v_1, w_1\}$. If there is an edge $\{v_1, w_i\}$ for any $i > 1$, we contradict the minimality of the new path, so any chord in this cycle must not be incident to $v_1$.  So the edges $\{v_1, v_2\}$ and $\{v_1, w_1\}$ must be contained in the same 4-cycle created by adding chords to the large cycle. This gives two possibilities, either the fourth vertex in the cycle is $v_3$ or $w_2$.  Similarly, we get a path $z_1, \hdots, z_r$ from $v_6$ to $v_1$ avoiding the edge $\{v_5, v_6\}$, which gives two possibilities for cycles $v_6, z_1, z_2, v_5$ or $v_6, z_1, v_4, v_5$.  By symmetry, there are three cases to consider.

\noindent\textbf{Case 3a: } The two $4$-cycles include $v_3$ and $v_4$.
\hfil
\begin{figure}[H]
\begin{tikzpicture}[scale=1.5]
\foreach \n in {1, ..., 6}
	\draw[fill=black] (\n, 0) circle (.075cm) node (x\n) {} node[above] {$v_{\n}$};
\foreach \n in {1, ..., 5}
	\draw (\n, 0) -- ({\n + 1}, 0);
\draw[fill=black] (2, -1) circle (.075cm) node (y1) {} node[below] {$w_1$};
\draw[fill=black] (5, -1) circle (.075cm) node (z1) {} node[below] {$z_1$};
\draw (1,0) -- (2,-1) -- (3,0);
\draw (4,0) -- (5,-1) -- (6,0);
\draw[dashed] (z1) -- (x2);
\draw[dashed] (y1) -- (x5);
\end{tikzpicture}
\end{figure}

The edge $\{w_1, z_1\}$ cannot exist, as we assumed there was no disjoint path of length 3 from $v_1$ to $v_6$; so the only possible additional edges are $\{v_2, z_1\}$ and $\{w_1, v_5\}$. If both of these edges are present, the induced subgraph on $\{v_1, v_2, v_5, v_6, w_1, z_1\}$ is a 6-cycle with no chord, a contradiction, so at least one of the two edges must be missing, which w.l.o.g. we may take to be $\{v_2, z_1\}$. If the edge $\{w_1, v_5\}$ is also missing, then the induced subgraph on all 8 vertices is $H^{(3)}$. If the edge $\{w_1, v_5\}$ is present, then the induced subgraph on $\{v_1, v_2, v_3, v_4, v_5, w_1\}$ is $H^{(1)}$.



\noindent\textbf{Case 3b: } The two $4$-cycles include $w_2$ and $v_4$.
\hfil
\begin{figure}[H]
\begin{tikzpicture}[scale=1.5]
\foreach \n in {1, ..., 6}
	\draw[fill=black] (\n, 0) circle (.075cm) node (x\n) {} node[above] {$v_{\n}$};
\foreach \n in {1, ..., 5}
	\draw (\n, 0) -- ({\n + 1}, 0);
\draw[fill=black] (1, -1) circle (.075cm) node (y1) {} node[below] {$w_1$};
\draw[fill=black] (2, -1) circle (.075cm) node (y2) {} node[below] {$w_2$};
\draw[fill=black] (5, -1) circle (.075cm) node (z1) {} node[below] {$z_1$};
\draw (x1) -- (y1) -- (y2) -- (x2);
\draw (x4) -- (z1) -- (x6);
\draw[dashed] (y1) -- (x5);
\draw[dashed] (y1) edge [bend right] (z1);
\draw[dashed] (y2) -- (x4);
\draw[dashed] (y2) -- (x6);
\draw[dashed] (x2) -- (z1);
\end{tikzpicture}
\end{figure}

First note that the edge $\{w_1, v_3\}$ cannot be present, as then we would be in the previous case. We consider the possible edges between the two 4-cycles: $\{w_1, v_5\}, \{w_1, z_1\}, \{w_2, v_4\}, \{w_2, v_6\},$ and $\{v_2, z_1\}$. 
If the edge $\{w_2, v_4\}$ is present, the induced subgraph on $\{v_1, v_2, v_3, v_4, w_1, w_2\}$ is $H^{(1)}$, so we can assume that it is not present.
Either of the edges $\{w_1, z_1\}$ and $\{w_2, v_6\}$ would give us a disjoint path of length 3, reducing to a previous case.  This leaves us with the following picture:

\hfil
\begin{figure}[H]
\begin{tikzpicture}[scale=1.5]
\foreach \n in {1, ..., 6}
	\draw[fill=black] (\n, 0) circle (.075cm) node (x\n) {} node[above] {$v_{\n}$};
\foreach \n in {1, ..., 5}
	\draw (\n, 0) -- ({\n + 1}, 0);
\draw[fill=black] (1, -1) circle (.075cm) node (y1) {} node[below] {$w_1$};
\draw[fill=black] (2, -1) circle (.075cm) node (y2) {} node[below] {$w_2$};
\draw[fill=black] (5, -1) circle (.075cm) node (z1) {} node[below] {$z_1$};
\draw (x1) -- (y1) -- (y2) -- (x2);
\draw (x4) -- (z1) -- (x6);
\draw[dashed] (y1) -- (x5);
\draw[dashed] (x2) -- (z1);
\end{tikzpicture}
\end{figure}

\noindent If the edge $\{w_1, v_5\}$ were present, it would produce a $6$-cycle with no chord. So the induced subgraph on the vertices $\{v_1, v_2, v_4, v_5, v_6, w_1, w_2, z_1\}$ is either $H^{(2)}$ or $H^{(3)}$, depending on whether or not the edge $\{v_2, z_1\}$ is present.



\noindent\textbf{Case 3c: } The two $4$-cycles include $w_2$ and $z_2$.
\hfil
\begin{figure}[H]
\begin{tikzpicture}[scale=1.5]
\foreach \n in {1, ..., 6}
	\draw[fill=black] (\n, 0) circle (.075cm) node (x\n) {} node[above] {$v_{\n}$};
\foreach \n in {1, ..., 5}
	\draw (\n, 0) -- ({\n + 1}, 0);
\draw[fill=black] (1, -1) circle (.075cm) node (y1) {} node[below] {$w_1$};
\draw[fill=black] (2, -1) circle (.075cm) node (y2) {} node[below] {$w_2$};
\draw[fill=black] (6, -1) circle (.075cm) node (z1) {} node[below] {$z_1$};
\draw[fill=black] (5, -1) circle (.075cm) node (z2) {} node[below] {$z_2$};
\draw (x1) -- (y1) -- (y2) -- (x2);
\draw (x6) -- (z1) -- (z2) -- (x5);
\draw[dashed] (x1) -- (z2);
\draw[dashed] (y1) -- (x5);
\draw[dashed] (y1) edge[bend right] (z1);
\draw[dashed] (y2) edge[bend right] (z2);
\draw[dashed] (y2) -- (x6);
\draw[dashed] (x2) -- (z1);

\end{tikzpicture}
\end{figure}

In this case, the possible edges between the two cycles are $\{v_1, z_2\}, \{w_1, v_5\}, \{w_1, z_1\}, \{w_2, v_6\}, \{w_2, z_2\},$ and $\{v_2, z_1\}$. 
If any of the edges $\{v_1, z_2\}, \{w_1, z_1\},$ or $\{w_2, v_6\}$ are present, we have a disjoint path of length 3 and are in a previous case. 
If either of $\{w_1, v_5\}$ or $\{v_2, z_1\}$ are present, we would have a $6$-cycle with no chord. So the induced subgraph on $\{v_1, v_2, v_5, v_6, w_1, w_2, z_1, z_2\}$ is either $H^{(2)}$ or $H^{(3)}$, depending on whether $\{w_2, z_2\}$ is present.



\noindent\textbf{Case 4:} $(\overline{G})_1$ contains a cycle.

Let $x_1, y_1, x_2, y_2$ be the cycle missing from $G$. Since we can assume our graph is connected, there is a shortest path from $x_1$ to $x_2$. If the shortest path has length at least 6, we have an induced path of length 5 and are thus in the previous case. So the shortest path has length 2 or 4. 

\noindent\textbf{Case 4a:} The shortest path between $x_1$ and $x_2$ is $x_1, z_1, z_2, z_3, x_2$.

Since $\delta(G) \geq 2$, $x_1$ must be adjacent to another vertex $z_4$ distinct from $z_1$ and $z_3$, and $x_2$ must be adjacent to another vertex $z_5$ distinct from $z_1$ and $z_3$. If $z_4 = z_5$, we have a shorter path from $x_1$ to $x_2$, so $z_4$ and $z_5$ must be distinct. 

\hfil
\begin{tikzpicture}
\draw[fill=black] (0,0) circle (.075cm) node (x1) {} node[right] {$x_1$};
\draw[fill=black] (2,0) circle (.075cm) node (y1) {} node[left] {$y_1$};
\draw[fill=black] (0,-2) circle (.075cm) node (x2) {} node[right] {$x_2$};
\draw[fill=black] (2,-2) circle (.075cm) node (y2) {} node[left] {$y_2$};
\draw[fill=black] (0,-1) circle (.075cm) node (z2) {} node[right] {$z_2$};
\draw[fill=black] (-1,-.5) circle (.075cm) node (z1) {} node[left] {$z_1$};
\draw[fill=black] (-1,-1.5) circle (.075cm) node (z3) {} node[left] {$z_3$};
\draw[fill=black] (-1, .5) circle (.075cm) node (z4) {} node[left] {$z_4$};
\draw[fill=black] (-1, -2.5) circle (.075cm) node (z5) {} node[left] {$z_5$};
\draw (z4) -- (x1) -- (z1) -- (z2) -- (z3) -- (x2) -- (z5);
\draw[dashed] (z4) -- (z2);
\draw[dashed] (z5) -- (z2);
\end{tikzpicture}

If the edges $\{x_2, z_1\}$ or $\{x_1, z_3\}$ are present, we have a shorter path, so these edges must be missing. If either of the edges $\{z_4, z_2\}$ or $\{z_5, z_2\}$ are missing, we have an induced path of length 5, putting us in the previous case. So the induced subgraph on $\{x_1, x_2, z_1, z_2, z_3, z_4, z_5\}$ is $H^{(8)}$.

\noindent\textbf{Case 4b:} The shortest path between $x_1$ and $x_2$ is $x_1, z_1, x_2$. 

Since $\delta(G) \geq 2$, $x_1$ must be adjacent to another vertex $z_2$, and $x_2$ must be adjacent to another vertex $z_3$. 

\noindent\textbf{Case 4b(i):} $z_1$ is the only common neighbor of $x_1$ and $x_2$. \\
If $z_2 \neq z_3$, $z_2$ must be adjacent to another vertex $z_4$. If $z_4 = x_2$, it is a common neighbor of both vertices. Otherwise, consider the induced subgraph on the vertices $\{x_1, x_2, z_1, z_2, z_3, z_4\}$. By assumption, the edges $\{z_3, x_1\}$ and $\{z_2, x_2\}$ cannot be present, so the only possibilities are $\{z_4, z_1\}$ and $\{z_4, z_3\}$. 

\hfil
\begin{tikzpicture}
\draw[fill=black] (0,0) circle (.075cm) node (x1) {} node[right] {$x_1$};
\draw[fill=black] (2,0) circle (.075cm) node (y1) {} node[left] {$y_1$};
\draw[fill=black] (0,-1) circle (.075cm) node (x2) {} node[right] {$x_2$};
\draw[fill=black] (2,-1) circle (.075cm) node (y2) {} node[left] {$y_2$};
\draw[fill=black] (-1,.5) circle (.075cm) node (z2) {} node[left] {$z_2$};
\draw[fill=black] (-1,-.5) circle (.075cm) node (z1) {} node[left] {$z_1$};
\draw[fill=black] (-1,-1.5) circle (.075cm) node (z3) {} node[left] {$z_3$};
\draw[fill=black] (0, 1) circle (.075cm) node (z4) {} node[right] {$z_4$};
\draw (z4) -- (z2) -- (x1) -- (z1) -- (x2) -- (z3);
\draw[dashed] (z4) -- (z1);
\draw[dashed] (z4) -- (z3);
\end{tikzpicture}

If neither of these are present, we have an induced path of length 5, which is case 3. If both edges are present, then the induced subgraph is $H^{(1)}$. If only $\{z_4, z_3\}$ is present, we have a cycle of length 6 with no chord, a contradiction. 
If only $\{z_4, z_1\}$ is present, we can find a second neighbor of $z_3$, which we label $z_5$. We apply the same analysis to the induced subgraph on $\{x_1, x_2, z_1, z_2, z_3, z_5\}$ and the only case we have not already argued is if the only additional edge is $\{z_1, z_5\}$. In this case, the induced subgraph on the seven vertices $\{x_1, x_2, z_1, z_2, z_3, z_4, z_5\}$ is $H^{(8)}$. 

\hfil
\begin{tikzpicture}
\draw[fill=black] (0,0) circle (.075cm) node (x1) {} node[right] {$x_1$};
\draw[fill=black] (2,0) circle (.075cm) node (y1) {} node[left] {$y_1$};
\draw[fill=black] (0,-1) circle (.075cm) node (x2) {} node[right] {$x_2$};
\draw[fill=black] (2,-1) circle (.075cm) node (y2) {} node[left] {$y_2$};
\draw[fill=black] (-1,.5) circle (.075cm) node (z2) {} node[left] {$z_2$};
\draw[fill=black] (-1,-.5) circle (.075cm) node (z1) {} node[left] {$z_1$};
\draw[fill=black] (-1,-1.5) circle (.075cm) node (z3) {} node[left] {$z_3$};
\draw[fill=black] (0, 1) circle (.075cm) node (z4) {} node[right] {$z_4$};
\draw[fill=black] (0, -2) circle (.075cm) node (z5) {} node[left] {$z_5$};
\draw (z4) -- (z2) -- (x1) -- (z1) -- (x2) -- (z3) -- (z5);
\draw (z4) -- (z1);
\draw[dashed] (z5) -- (z1);
\draw[dashed] (z5) -- (z2);
\end{tikzpicture}

\noindent\textbf{Case 4b(ii):} $z_2$ and $z_3$ can be chosen to be the same.

In this case, we have a 4-cycle $x_1, z_1, x_2, z_2$. We then consider the shortest path from $y_1$ to $y_2$ and apply all prior case 4 analysis to this path. The only case we have not then argued is if there is also a 4-cycle $y_1, w_1, y_2, w_2$. In this case, the induced subgraph on the vertices $\{x_1, x_2, z_1, z_2, y_1, y_2, w_1, w_2\}$ is one of $H^{(2)}, H^{(3)}, H^{(4)}, H^{(5)}, H^{(6)},$ or $H^{(7)}$, depending on what edges are present between the $z_i$ and $w_j$. 

\noindent\textbf{Case 5:} $(\overline{G})_1$ has two nonadjacent edges.

In this case, $G$ contains 4 vertices $x_1, x_2, y_1, y_2$ such that $\{x_1, y_2\}$ and $\{x_2, y_1\}$ are edges, while $\{x_1, y_1\}$ and $\{x_2, y_2\}$ are non-edges. Because $\delta(G) \geq 2$, $x_2$ is adjacent to another vertex, $y_3$, and $y_2$ is adjacent to another vertex $x_3$. The edges $\{x_1, y_3\}, \{x_3, y_1\},$ and $\{x_3, y_3\}$ may or may not be present.

\hfil
\begin{tikzpicture}
\draw[fill=black] (0,0) circle (.075cm) node (x1) {} node[left] {$x_1$};
\draw[fill=black] (2,0) circle (.075cm) node (y1) {} node[right] {$y_1$};
\draw[fill=black] (0,-1) circle (.075cm) node (x2) {} node[left] {$x_2$};
\draw[fill=black] (2,-1) circle (.075cm) node (y2) {} node[right] {$y_2$};
\draw[fill=black] (0,-2) circle (.075cm) node (x3) {} node[left] {$x_3$};
\draw[fill=black] (2,-2) circle (.075cm) node (y3) {} node[right] {$y_3$};
\draw (x1) -- (y2) -- (x3);
\draw (y1) -- (x2) -- (y3);
\draw[dashed] (x1) -- (y3) -- (x3) -- (y1);
\end{tikzpicture}

\noindent\textbf{Case 5a:} None of these three edges are present.

In this case, $x_1, y_1, x_3, y_3$ gives a 4-cycle in $(\overline{G})_1$ which is case 4. 

\noindent\textbf{Case 5b:} All three edges are present.

In this case, our graph is $H^{(1)}$. 

\noindent\textbf{Case 5c:} Exactly one of $\{x_1, y_3\}$, $\{x_3, y_1\}$, and $\{x_3,y_3\}$ are present.

In this case, the induced subgraph on all six vertices is an induced path of length 5, which is case 3.

\noindent\textbf{Case 5d:} Only the edge $\{x_3, y_3\}$ is missing.

In this case, $G$ has a 6-cycle with no chord, a contradiction.

\noindent\textbf{Case 5e:} Only the edge $\{x_3, y_1\}$ is missing.

In this case, $y_1$ must be adjacent to another vertex $x_4$ and the edges $\{x_4, y_2\}$ and $\{x_4, y_3\}$ may or may not be present.

\hfil
\begin{tikzpicture}
\draw[fill=black] (0,0) circle (.075cm) node (x1) {} node[left] {$x_1$};
\draw[fill=black] (2,0) circle (.075cm) node (y1) {} node[right] {$y_1$};
\draw[fill=black] (0,-1) circle (.075cm) node (x2) {} node[left] {$x_2$};
\draw[fill=black] (2,-1) circle (.075cm) node (y2) {} node[right] {$y_2$};
\draw[fill=black] (0,-2) circle (.075cm) node (x3) {} node[left] {$x_3$};
\draw[fill=black] (2,-2) circle (.075cm) node (y3) {} node[right] {$y_3$};
\draw[fill=black] (0,1) circle (.075cm) node (x4) {} node[left] {$x_4$};
\draw (x1) -- (y2) -- (x3);
\draw (y1) -- (x2) -- (y3);
\draw (x1) -- (y3) -- (x3);
\draw (y1) -- (x4);
\draw[dashed] (y2) -- (x4) -- (y3);
\end{tikzpicture}

\noindent \textbf{Case 5e(i):} Both $\{x_4, y_2\}$ and $\{x_4, y_3\}$ are present. 

In this case, the induced subgraph on all seven vertices is $H^{(8)}$. 

\noindent\textbf{Case 5e(ii): } Only $\{x_4, y_2\}$ is present. 

In this case, the induced subgraph on $\{x_1, x_2, x_4, y_1, y_2, y_3\}$ is a 6-cycle with no chord, a contradiction.

\noindent\textbf{Case 5e(iii):} Only $\{x_4, y_3\}$ is present.

In this case, the induced subgraph on all seven vertices is $H^{(8)}$.

\noindent\textbf{Case 5e(iv):} Both edges are missing.

In this case, the induced subgraph on $\{x_4, y_1, x_2, y_3, x_3, y_2\}$ is a path of length 5, which is case 3. 

\noindent\textbf{Case 5f:} Only the edge $\{x_1, y_3\}$ is missing. 

This case is identical to case 5e.







\end{proof}


\section{Main Results}\label{main}

In this section we collect the proofs of our main results classifying the graphs whose toric edge ideals satisfy each of the Green-Lazarsfeld conditions $\mathbf{N}_p$.  The previous sections identified certain obstructions in the form of forbidden subgraphs to a given ideal $I_G$ satisfying conditions $\mathbf{N}_2$, $\mathbf{N}_3$, or $\mathbf{N}_4$.  First we use the following result which shows we may focus our attention on the existence of minimal Koszul syzygies of toric edge ideals.

\begin{prop}[cf. {\cite[Proposition 2.8]{M}}]\label{KoszulSchreyer}
If R = S/I is a Koszul algebra, then the first syzygies of $I$ are  minimally generated by linear syzygies and Koszul syzygies.
\end{prop}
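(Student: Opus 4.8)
The plan is to argue homologically, entirely in terms of $\tor$ modules, and to reduce the statement to the multiplicative structure of the change-of-rings spectral sequence for $S\onto R$. First recall the standard fact that a Koszul algebra $R=S/I$ is defined by quadrics, so write $I=(f_1,\dots,f_r)$ with $\deg f_a=2$; moreover the minimal first syzygies of $I$ occur only in internal degrees $\ge 3$, since a minimal set of quadric generators has no linear relations. Thus it suffices to establish: (a) $\beta^S_{2,j}(S/I)=\beta^S_{1,j}(I)=0$ for all $j\ge 5$, so that the minimal first syzygies of $I$ occur only in degrees $3$ and $4$; and (b) the degree-$4$ minimal syzygies may be chosen among the Koszul syzygies $f_a\mathbf{e}_b-f_b\mathbf{e}_a$. (A degree-$3$ minimal syzygy of quadrics is a linear syzygy by definition, so there is nothing to check there.) Equivalently, in $\tor$-language: $\tor^S_2(S/I,k)$ is concentrated in internal degrees $3$ and $4$, and its degree-$4$ part is spanned by the products $\bar f_a\cdot\bar f_b$ in the Koszul homology algebra $\tor^S_\bullet(S/I,k)$, using the standard identification of such products with classes of Koszul syzygies.

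To prove (a) and (b) together I would invoke the Cartan--Eilenberg change-of-rings spectral sequence
\[E^2_{p,q}=\tor^R_p\bigl(k,\tor^S_q(R,k)\bigr)\ \Longrightarrow\ \tor^S_{p+q}(k,k).\]
Each $\tor^S_q(R,k)$ is a $k$-vector space annihilated by $\mathfrak{m}_R$, so $E^2_{p,q}\cong\tor^R_p(k,k)\otimes_k\tor^S_q(R,k)$ as internally graded $k$-spaces, and the spectral sequence is one of algebras with $E^2$-page the tensor-product algebra $\tor^R_\bullet(k,k)\otimes_k\tor^S_\bullet(R,k)$. Koszulness puts $\tor^R_p(k,k)$ in internal degree exactly $p$, while minimality of the $S$-resolution of $R$ puts $\tor^S_q(R,k)$ in internal degrees $\ge q+1$ for $q\ge1$. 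Since the abutment $\tor^S_n(k,k)=\bigwedge^n S_1$ is concentrated in internal degree $n$, and every internal degree occurring in $E^2_{p,q}$ with $q\ge1$ exceeds $p+q$, all entries with $q\ge1$ must die: $E^\infty_{p,q}=0$ for $q\ge1$. As $E_{0,q}$ has no outgoing differentials, $E^2_{0,q}=\tor^S_q(R,k)$ must be annihilated by incoming differentials.

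Take $q=2$. The only incoming differentials are $d_2\colon E^2_{2,1}\to E^2_{0,2}$ and $d_3\colon E^3_{3,0}\to E^3_{0,2}$; their sources sit in internal degrees $2+2=4$ and $3$ respectively. Hence $\tor^S_2(R,k)$ can only be nonzero in degrees $3$ and $4$, giving (a), and its degree-$4$ part must equal $\operatorname{image}(d_2)$. Now I use multiplicativity: writing a class of $E^2_{2,1}$ as the product of one from $E^2_{2,0}=\tor^R_2(k,k)$ and one from $E^2_{0,1}=\tor^S_1(R,k)$, the Leibniz rule (together with $d_2(E^2_{0,1})=0$) reduces $d_2$ on $E^2_{2,1}$ to $\xi\otimes\eta\mapsto d_2(\xi)\cdot\eta$, where $d_2\colon\tor^R_2(k,k)\to\tor^S_1(R,k)$ is the edge map — surjective by the five-term exact sequence, since the composite $\tor^S_1(R,k)\to\tor^S_1(k,k)$ vanishes ($I\subseteq\mathfrak{m}^2$) — and $\cdot$ is the Koszul-homology product. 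Therefore $\tor^S_2(R,k)_4=\operatorname{image}(d_2)=\tor^S_1(R,k)\cdot\tor^S_1(R,k)$, and the standard chain-level computation identifying $\bar f_a\cdot\bar f_b$ with the class of $f_a\mathbf{e}_b-f_b\mathbf{e}_a$ shows this is exactly the span of the Koszul syzygies. This gives (b).

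The two steps I expect to require the most care are purely technical: verifying the multiplicativity/Leibniz reduction of $d_2$ on $E^2_{2,1}$ (a chain-level computation, or an appeal to the standard treatment of this spectral sequence and of $\tor^S_\bullet(R,k)$ as an algebra), and the identification of products of $\tor^S_1$-classes with Koszul syzygies (also standard). If one prefers, part (a) can instead be quoted directly from the bound $t^S_i(R)\le 2i$ for Koszul algebras due to Avramov, Conca and Iyengar, which immediately gives $\tor^S_2(S/I,k)_j=0$ for $j\ge5$; but (b) still needs essentially the same input, so the unified spectral-sequence argument seems cleanest. The remaining points — convergence of the (first-quadrant) spectral sequence and the triviality of the $R$-action on $\tor^S_q(R,k)$ — are routine.
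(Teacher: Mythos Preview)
The paper does not give its own proof; it cites Mastroeni \cite[Proposition~2.8]{M} and remarks that, in the quadratic Gr\"obner basis case relevant to the paper, one can alternatively argue via Schreyer's theorem. Your change-of-rings spectral sequence argument is correct and is essentially the standard homological proof of this statement (and, as far as I know, the argument behind the cited reference): the internal-degree constraints coming from Koszulness of $R$ and from $S$ being polynomial force $\tor^S_2(R,k)$ into degrees $3$ and $4$ only, and multiplicativity of the spectral sequence together with surjectivity of the edge differential $d_2\colon\tor^R_2(k,k)\to\tor^S_1(R,k)$ identifies the degree-$4$ part with the image of the Koszul-homology product $\tor^S_1(R,k)\cdot\tor^S_1(R,k)$, which is exactly the span of the Koszul syzygies. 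The two technical points you flag---the Leibniz reduction of $d_2$ on $E^2_{2,1}$ and the identification of Koszul-homology products with classes of Koszul relations---are indeed standard and need only a reference (e.g.\ to any treatment of multiplicative spectral sequences for the first, and to the DG-algebra structure on the Koszul complex for the second). Your proof is thus complete and self-contained where the paper is content to cite; your aside that part~(a) alone follows from the Avramov--Conca--Iyengar bound $t^S_i(R)\le 2i$ is also apt, since the paper itself invokes precisely that bound later in the proof of Theorem~\ref{N3}.
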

We note that it is also possible to prove this result when $I$ has a quadratic Gr\"obner bases (as happens in our case of interest) using Schreyer's Theorem on syzygies.  See \cite[Theorem 3.3]{CLO}.  A similar observation was made in \cite[Theorem 3.1]{EHH}.

It would be possible to prove the following characterization of linearly presented bipartite toric edge ideals by appealing to Theorem~\ref{InducedGraphBetti} and enumerating all possible subgraphs with at most 8 vertices.  While this strategy is useful to enumerate the obstructions to being linearly presented, it is inefficient to check all such graphs by hand.  Instead, by using  the previous proposition, we need only consider induced subgraphs where a potential Koszul syzygy exists and show that it is not a minimal generator of the syzygy module of $I_G$ in every possible case.  This brings us to our first main result.

\begin{thm}\label{N2entire}
  Let $G$ be a bipartite graph with $\delta(G) \geq 2$. Then $I_G$ satisfies $\mathbf{N}_2$ if and only if  $\overline{G}$ is essentially a tree of diameter at most 3.
\end{thm}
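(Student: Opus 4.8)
The statement is an ``if and only if'', and the natural strategy is to prove each direction by combining the obstruction lemmas of Section~\ref{obstruct} with the graph-theoretic dichotomy of Theorem~\ref{GraphConditions}. For the ``only if'' direction, suppose $I_G$ satisfies $\mathbf{N}_2$. Since $\mathbf{N}_2$ implies $\mathbf{N}_1$, Theorem~\ref{OHthm} tells us every cycle of $G$ of length $\ge 6$ has a chord. Now argue by contraposition: if $(\overline{G})_1$ is \emph{not} a tree of diameter at most $3$, then by Theorem~\ref{GraphConditions} the graph $G$ contains one of $H^{(1)},\dots,H^{(8)}$ as an induced subgraph. By Lemmas~\ref{N2obstruction1} and~\ref{N2obstruction2}, $\beta_{1,4}(I_{H^{(i)}})\neq 0$ for each such $i$, so Theorem~\ref{InducedGraphBetti} forces $\beta_{1,4}(I_G)\neq 0$, contradicting $\mathbf{N}_2$. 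Hence $(\overline{G})_1$ is a tree of diameter at most $3$, i.e. $\overline{G}$ is essentially a tree of diameter at most $3$.

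\textbf{The ``if'' direction.} Assume $\overline{G}$ is essentially a tree of diameter at most $3$; we must show $I_G$ satisfies $\mathbf{N}_2$. First, since $(\overline{G})_1$ has no cycles, in particular no $4$-cycle, every $4$ vertices $x_i,x_j \in X$, $y_k,y_\ell \in Y$ with $\{x_i,y_k\},\{x_j,y_\ell\}\in E$ cannot have both $\{x_i,y_\ell\},\{x_j,y_k\}$ missing; more to the point one checks directly (or via Proposition~\ref{tree3} applied to $\overline G$) that $G$ contains no induced cycle of length $\ge 6$ without a chord, so by Theorem~\ref{OHthm} the ideal $I_G$ is generated by quadratic binomials, $S/I_G$ is Koszul and normal, giving $\mathbf{N}_0$ and $\mathbf{N}_1$. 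It remains to prove $\beta_{1,j}(I_G)=0$ for $j>3$, i.e. $\beta_{1,4}(I_G)=0$ (higher $j$ are killed because, by Koszulness and Theorem~\ref{InducedGraphBetti}-type degree bounds, the first syzygies live in degree $\le 4$; alternatively one invokes linearity of the resolution of a Koszul algebra up to the relevant step). By Proposition~\ref{KoszulSchreyer}, since $S/I_G$ is Koszul, the first syzygy module of $I_G$ is generated by linear syzygies together with Koszul syzygies; the linear syzygies contribute only to $\beta_{1,3}$, so the only possible source of a nonzero $\beta_{1,4}$ is a Koszul syzygy $f\cdot g - g\cdot f$ between two quadratic generators $f,g$ that is a \emph{minimal} generator of the syzygy module. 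Thus it suffices to show that in every graph $G$ of the stated form, every Koszul syzygy between two quadric generators of $I_G$ is a linear combination of linear syzygies and other Koszul syzygies — equivalently, is redundant.

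\textbf{Reducing to small subgraphs and checking them.} A Koszul syzygy involves two generators, hence at most two $4$-cycles of $G$, hence an induced subgraph on at most $8$ vertices; moreover the relevant homology computation is local by Theorem~\ref{InducedGraphBetti}. So the claim reduces to: for every induced subgraph $H$ of $G$ on at most $8$ vertices that supports a pair of $4$-cycles and that satisfies the hypotheses (no chordless long cycle, $\overline{H}_1$ a tree of diameter $\le 3$ — or at least is consistent with $G$ having this property), the Koszul syzygy between the corresponding quadrics is not a minimal generator. Here is where the list $H^{(1)},\dots,H^{(8)}$ does double duty: Theorem~\ref{GraphConditions} guarantees that if $G$ avoids all eight of these as induced subgraphs then $(\overline G)_1$ \emph{is} a tree of diameter $\le 3$, and conversely one checks the eight configurations are exactly the ``bad'' ones where a minimal non-linear syzygy appears. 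So the proof is: (a) the eight forbidden graphs all fail $\mathbf{N}_2$ (Lemmas~\ref{N2obstruction1}, \ref{N2obstruction2}); (b) if $G$ contains none of them, then for each pair of $4$-cycles $C_1, C_2$ in $G$ — i.e., each potential Koszul syzygy — the induced subgraph $H$ on $V(C_1)\cup V(C_2)$ is one of the finitely many ``good'' configurations, and in each such $H$ one computes $\widetilde H_1(\Gamma(\underline\alpha);k)=0$ for the relevant multidegrees $\underline\alpha$ (equivalently, $\beta_{1,4}(I_H)=0$), so the Koszul syzygy is redundant; combined with Proposition~\ref{KoszulSchreyer}, $I_G$ is linearly presented. \textbf{The main obstacle} is step (b): organizing the case analysis over the possible overlap patterns of two $4$-cycles (sharing an edge, a vertex, joined by $0,1,2,3,4$ edges, disjoint, etc.) so that every ``good'' configuration is covered, and verifying the homology vanishing in each — this is essentially the content of Theorem~\ref{GraphConditions} run in the contrapositive together with a handful of explicit simplicial-complex computations, and the bookkeeping must be complete enough that no configuration slips through.
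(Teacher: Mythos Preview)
Your proposal is correct and takes essentially the same approach as the paper: the ``only if'' direction via Theorem~\ref{GraphConditions} together with Lemmas~\ref{N2obstruction1}, \ref{N2obstruction2} and Theorem~\ref{InducedGraphBetti}, and the ``if'' direction via Koszulness and Proposition~\ref{KoszulSchreyer}, reducing to the claim that every Koszul syzygy between two quadric generators is redundant. The paper organizes the case analysis you flag as the main obstacle into five cases according to how the two $4$-cycles overlap (sharing two edges, one edge, two vertices but no edge, one vertex, or disjoint); in each case it uses the absence of the $H^{(i)}$ as induced subgraphs to force additional edges into a minimal ``good'' subgraph $H$, and then writes the specific Koszul syzygy explicitly as an $S$-linear combination of linear syzygies of $I_H$ (which extend to $I_G$)---a more hands-on verification than your proposed homology computation, but equivalent in effect.
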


\begin{proof}
  \resetcase{} Suppose $\overline{G}$ is not essentially a tree of diameter at most $3$.  If there is a  cycle of length $\ge 6$  that has no chord, $I_G$ is not quadratically generated by Theorem~\ref{OHthm} and thus must fail condition $\mathbf{N}_1$ and also condition $\mathbf{N}_2$.  So we may suppose that all chords of $G$ of length $\ge 6$ have a chord. Now by Theorem~\ref{GraphConditions}, $G$ contains the graph $H = H^{(i)}$ for some $1 \le i \le 8$ as an induced subgraph.  By Lemmas~\ref{N2obstruction1} and \ref{N2obstruction2}, $\beta_{2,4}(I_{H}) \neq 0$.  By Theorem~\ref{InducedGraphBetti}, $\beta_{2,4}(I_G) \neq 0$ and thus $I_G$  does not satisfy property $\mathbf{N}_2$.
  
  Now suppose that $\overline{G}$ is essentially a tree of diameter at most $3$.  By Theorem~\ref{OHthm}, $I_G$ is generated by a Gr\"obner basis of quadrics corresponding to $4$-cycles in $G$ and $S/I_G$ is Koszul. If $H$ is any subgraph of $G$, then any 4-cycle in $H$ is a 4-cycle in $G$, so $(I_H)_2 \subseteq I_G$. Any syzygy of $I_H$ can be extended to a syzygy of $I_G$, so, by Proposition~\ref{KoszulSchreyer}, it is sufficient to show that any Koszul syzygy is a linear combination of linear syzygies.

  Koszul Syzygies correspond to pairs of distinct $4$-cycles. There are five
  possibilities for the configuration of pairs of distinct cycles:
  \begin{enumerate}
  \item they share two edges,
  \item they share an edge,
  \item they share two vertices, but no edges.
  \item they share a vertex but no edges,
  \item or they don't intersect.
  \end{enumerate}

  \case{} $G$ contains a subgraph $H$ which has distinct $4$-cycles sharing exactly two edges.\\
  In this case, the only subgraph satisfying our assumption is $K_{2,3}$; see Figure~\ref{23}.  $I_H$ is then resolved by the linear Eagon-Northcott complex and thus has no minimal quadratic syzygies.

  \begin{figure}[H]
    \centering
    \begin{tikzpicture}
      \def\hor{3cm}
      \def\v{-1cm}

      \draw[fill=black] (0*\hor, 0*\v) circle (.1cm) node (v1) {} node[left] {$x_1$};
      \draw[fill=black] (1*\hor, 0*\v) circle (.1cm) node (u1) {} node[right] {$y_1$};
      \draw[fill=black] (0*\hor, 1*\v) circle (.1cm) node (v2) {} node[left] {$x_2$};
      \draw[fill=black] (1*\hor, 1*\v) circle (.1cm) node (u2) {} node[right] {$y_2$};
      \draw[fill=black] (1*\hor, 2*\v) circle (.1cm) node (u3) {} node[right] {$y_3$};

      \draw (v1) -- (u1);
      \draw (v1) -- (u2);
      \draw (v1) -- (u3);
      \draw (v2) -- (u1);
      \draw (v2) -- (u2);
      \draw (v2) -- (u3);
    \end{tikzpicture}
    \caption{}\label{23}
  \end{figure}






  
  \case{} $G$ contains a subgraph $H$ which has distinct $4$-cycles sharing exactly one edge.\\
  In this case, the minimal graph containing the two given 4-cycles is pictured in Figure~\ref{case2a},

  \begin{figure}[h]
    \centering
    \begin{subfigure}[t]{2in}
      \centering
      \begin{tikzpicture}
        \def\hor{3cm}
        \def\v{-1cm}

        \draw[fill=black] (0*\hor, 0*\v) circle (.1cm) node (v1) {} node[left] {$x_1$};
        \draw[fill=black] (1*\hor, 0*\v) circle (.1cm) node (u1) {} node[right] {$y_1$};
        \draw[fill=black] (0*\hor, 1*\v) circle (.1cm) node (v2) {} node[left] {$x_2$};
        \draw[fill=black] (1*\hor, 1*\v) circle (.1cm) node (u2) {} node[right] {$y_2$};
        \draw[fill=black] (0*\hor, 2*\v) circle (.1cm) node (v3) {} node[left] {$x_3$};
        \draw[fill=black] (1*\hor, 2*\v) circle (.1cm) node (u3) {} node[right] {$y_3$};

        \draw[blue, dashed, line width=0.3mm] (v1) -- (u3);
        \draw[red, dashed, line width=0.3mm] (v3) -- (u1);
        \draw (v1) -- (u1);
        \draw (v1) -- (u2);
        \draw (v2) -- (u1);
        \draw (v2) -- (u2);
        \draw (v2) -- (u3);
        \draw (v3) -- (u2);
        \draw (v3) -- (u3);
      \end{tikzpicture}
      \caption{The minimal graph containing the two cycles.}\label{case2a}
    \end{subfigure}
    \hspace{1cm}
    \begin{subfigure}[t]{2in}
      \centering
      \begin{tikzpicture}
        \def\hor{3cm}
        \def\v{-1cm}
        
        \draw[fill=black] (0*\hor + 0*\hor, 0*\v) circle (.1cm) node (nv1) {} node[left] {$x_1$};
        \draw[fill=black] (0*\hor + 1*\hor, 0*\v) circle (.1cm) node (nu1) {} node[right] {$y_1$};
        \draw[fill=black] (0*\hor + 0*\hor, 1*\v) circle (.1cm) node (nv2) {} node[left] {$x_2$};
        \draw[fill=black] (0*\hor + 1*\hor, 1*\v) circle (.1cm) node (nu2) {} node[right] {$y_2$};
        \draw[fill=black] (0*\hor + 0*\hor, 2*\v) circle (.1cm) node (nv3) {} node[left] {$x_3$};
        \draw[fill=black] (0*\hor + 1*\hor, 2*\v) circle (.1cm) node (nu3) {} node[right] {$y_3$};

        \draw (nv1) -- (nu1);
        \draw (nv1) -- (nu2);
        \draw (nv1) -- (nu3);
        \draw (nv2) -- (nu1);
        \draw (nv2) -- (nu2);
        \draw (nv2) -- (nu3);
        \draw (nv3) -- (nu2);
        \draw (nv3) -- (nu3);
      \end{tikzpicture}
      \caption{Edge $\{x_1, y_3\}$ added.}\label{case2b}
    \end{subfigure}
    \caption{}\label{case2}
  \end{figure}
  \noindent where the two 4-cycles are $x_1, y_1, x_2, y_2$ and $x_2, y_2, x_3, y_3$ and the bipartite complement consists of the two dashed lines. Since $H^{(1)}$ is not an induced subgraph of $G$ by Theorem~\ref{GraphConditions}, at least one of the two missing edges must be present. By symmetry, we can assume that this present edge is the edge from $x_1$ to $y_3$; see Figure~\ref{case2b}.  Calling this graph $H$ and labeling the edge from $x_i$ to $y_j$ by $e_{ij}$, we have
  \begin{align*}
    I_H = (&e_{12}e_{21}-e_{11}e_{22},
           e_{13}e_{21}-e_{11}e_{23},
           e_{13}e_{22}-e_{12}e_{23},
           e_{13}e_{32}-e_{12}e_{33},
           e_{23}e_{32}-e_{22}e_{33}).
  \end{align*}

  \noindent The Koszul syzygy in this case is

  \[\mat{-e_{22}e_{33} + e_{23}e_{32} \\ 0 \\ 0 \\ 0 \\ e_{11}e_{22}-e_{12}e_{21}} = e_{32}\mat{e_{23} \\ -e_{22} \\ e_{21} \\ 0 \\ 0} - e_{21} \mat{0 \\ 0 \\ e_{32} \\ -e_{22} \\ e_{12}} + e_{22}\mat{-e_{33} \\ e_{32} \\ 0 \\ -e_{21} \\ e_{11}}.\]

  \noindent The reader can verify that the terms on the right-hand side are linear syzygies.

  \case{}$G$ contains a subgraph $H$ which has distinct $4$-cycles sharing exactly two vertices but no edges.\\
   In this case, the only subgraph satisfying our assumptions is

  \begin{figure}[H]
    \centering
    \begin{tikzpicture}
      \def\hor{3cm}
      \def\v{-1cm}

      \draw[fill=black] (0*\hor, 0*\v) circle (.1cm) node (v1) {} node[left] {$x_1$};
      \draw[fill=black] (1*\hor, 0*\v) circle (.1cm) node (u1) {} node[right] {$y_1$};
      \draw[fill=black] (0*\hor, 1*\v) circle (.1cm) node (v2) {} node[left] {$x_2$};
      \draw[fill=black] (1*\hor, 1*\v) circle (.1cm) node (u2) {} node[right] {$y_2$};
      \draw[fill=black] (1*\hor, 2*\v) circle (.1cm) node (u3) {} node[right] {$y_3$};
      \draw[fill=black] (1*\hor, 3*\v) circle (.1cm) node (u4) {} node[right] {$y_4$};

      \draw (v1) -- (u1);
      \draw (v1) -- (u2);
      \draw (v2) -- (u1);
      \draw (v2) -- (u2);
      \draw (v1) -- (u3);
      \draw (v1) -- (u4);
      \draw (v2) -- (u3);
      \draw (v2) -- (u4);
    \end{tikzpicture}
    \caption{}
  \end{figure}

 \noindent where the two cycles are $x_1, y_1, x_2, y_2$ and $x_3, y_1, x_4, y_2$.  Once again this is $K_{2,4}$ and $I_{K_{2,4}}$ is resolved by a linear Eagon-Northcott resolution and so, as in Case 1, there are no minimal quadratic syzygies.






  \case{} $G$ contains a subgraph $H$ which has distinct $4$-cycles sharing exactly one vertex and no edges.\\
  In this case, the minimal subgraph containing the two cycles is pictured in Figure~\ref{2case4a},

  \begin{figure}[h]
    \centering
    \begin{subfigure}[t]{2.5in}
      \centering
      \begin{tikzpicture}
        \def\hor{3cm}
        \def\v{-1cm}

        \draw[fill=black] (0*\hor, 0*\v) circle (.1cm) node (v1) {} node[left] {$x_1$};
        \draw[fill=black] (1*\hor, 0*\v) circle (.1cm) node (u1) {} node[right] {$y_1$};
        \draw[fill=black] (0*\hor, 1*\v) circle (.1cm) node (v2) {} node[left] {$x_2$};
        \draw[fill=black] (1*\hor, 1*\v) circle (.1cm) node (u2) {} node[right] {$y_2$};
        \draw[fill=black] (0*\hor, 2*\v) circle (.1cm) node (v3) {} node[left] {$x_3$};
        \draw[fill=black] (1*\hor, 2*\v) circle (.1cm) node (u3) {} node[right] {$y_3$};
        \draw[fill=black] (0*\hor, 3*\v) circle (.1cm) node (v4) {} node[left] {$x_4$};

        \draw (v1) -- (u1);
        \draw (v1) -- (u2);
        \draw (v2) -- (u1);
        \draw (v2) -- (u2);
        \draw (v3) -- (u2);
        \draw (v3) -- (u3);
        \draw (v4) -- (u2);
        \draw (v4) -- (u3);
        \draw[dashed, line width=0.3mm, red] (v1) -- (u3);
        \draw[dashed, line width=0.3mm, red] (v2) -- (u3);
        \draw[dashed, line width=0.3mm, blue] (v3) -- (u1);
        \draw[dashed, line width=0.3mm, blue] (v4) -- (u1);
      \end{tikzpicture}
      \caption{Minimal subgraph containing the cycles.}\label{2case4a}
    \end{subfigure}
    \quad
    \begin{subfigure}[t]{2.5in}
      \centering
      \begin{tikzpicture}
        \def\hor{3cm}
        \def\v{-1cm}

        \draw[fill=black] (0*\hor, 0*\v) circle (.1cm) node (v1) {} node[left] {$x_1$};
        \draw[fill=black] (1*\hor, 0*\v) circle (.1cm) node (u1) {} node[right] {$y_1$};
        \draw[fill=black] (0*\hor, 1*\v) circle (.1cm) node (v2) {} node[left] {$x_2$};
        \draw[fill=black] (1*\hor, 1*\v) circle (.1cm) node (u2) {} node[right] {$y_2$};
        \draw[fill=black] (0*\hor, 2*\v) circle (.1cm) node (v3) {} node[left] {$x_3$};
        \draw[fill=black] (1*\hor, 2*\v) circle (.1cm) node (u3) {} node[right] {$y_3$};
        \draw[fill=black] (0*\hor, 3*\v) circle (.1cm) node (v4) {} node[left] {$x_4$};

        \draw (v1) -- (u1);
        \draw (v1) -- (u2);
        \draw (v2) -- (u1);
        \draw (v2) -- (u2);
        \draw (v3) -- (u2);
        \draw (v3) -- (u3);
        \draw (v4) -- (u2);
        \draw (v4) -- (u3);
        \draw (v1) -- (u3);
        \draw (v2) -- (u3);
      \end{tikzpicture}
      \caption{Minimal satisfactory subgraph.}\label{2case4b}
    \end{subfigure}
    \caption{}
  \end{figure}
  \noindent where the two cycles are $x_1, y_1, x_2, y_2$ and $x_3, y_2, x_4, y_3$. To ensure that $\overline{G}$ is essentially connected, one of the two connected components of $\overline{G}$ (the dashed lines) must be present in $G$. By symmetry, we can assume that the edges $\{x_1, y_3\}$ and $\{x_2, y_3\}$ are present in $G$. The resulting graph is shown in Figure~\ref{2case4b}.

  Calling this graph $H$ and labeling the edge from $x_i$ to $y_j$ by $e_{ij}$, one computes that

  \begin{align*}
    I_H =  (&e_{12}e_{21}-e_{11}e_{22},
            e_{13}e_{21}-e_{11}e_{23},
            e_{13}e_{22}-e_{12}e_{23},
            e_{13}e_{32}-e_{12}e_{33},\\
            &e_{23}e_{32}-e_{22}e_{33},
            e_{13}e_{42}-e_{12}e_{43},
            e_{23}e_{42}-e_{22}e_{43},
            e_{33}e_{42}-e_{32}e_{43}).
  \end{align*}

  \noindent The Koszul syzygy in question is then a sum of linear syzygies, as shown below.  
  
  \[\mat{-e_{32}e_{43} + e_{33}e_{42} \\ 0 \\ 0 \\ 0 \\ 0 \\ 0 \\ 0 \\ e_{11}e_{22}-e_{12}e_{21}} = e_{11}\mat{0 \\ 0 \\ 0 \\ 0 \\ 0 \\ e_{42} \\ -e_{32} \\ e_{22}} - e_{21}\mat{0\\0\\0\\e_{42}\\-e_{32}\\0\\0\\e_{12}} -e_{42}\mat{-e_{33} \\ e_{32} \\ 0 \\ -e_{21} \\ 0 \\ e_{11} \\ 0 \\ 0 } + e_{32}\mat{-e_{43} \\ e_{42} \\ 0 \\ 0 \\ -e_{21} \\ 0 \\ e_{11} \\ 0 }.\]


  \case{}$G$ contains a subgraph $H$ which has two disjoint $4$-cycles.\\
   In this case, the minimal subgraph of the cycles is pictured in \ref{2case5a},

  \begin{figure}[h]
    \begin{subfigure}[t]{2.5in}
      \centering
      \begin{tikzpicture}
        \def\hor{3cm}
        \def\v{-1cm}

        \draw[fill=black] (0*\hor, 0*\v) circle (.1cm) node (v1) {} node[left] {$x_1$};
        \draw[fill=black] (1*\hor, 0*\v) circle (.1cm) node (u1) {} node[right] {$y_1$};
        \draw[fill=black] (0*\hor, 1*\v) circle (.1cm) node (v2) {} node[left] {$x_2$};
        \draw[fill=black] (1*\hor, 1*\v) circle (.1cm) node (u2) {} node[right] {$y_2$};
        \draw[fill=black] (0*\hor, 2*\v) circle (.1cm) node (v3) {} node[left] {$x_3$};
        \draw[fill=black] (1*\hor, 2*\v) circle (.1cm) node (u3) {} node[right] {$y_3$};
        \draw[fill=black] (0*\hor, 3*\v) circle (.1cm) node (v4) {} node[left] {$x_4$};
        \draw[fill=black] (1*\hor, 3*\v) circle (.1cm) node (u4) {} node[right] {$y_4$};

        \draw (v1) -- (u1);
        \draw (v1) -- (u2);
        \draw (v2) -- (u1);
        \draw (v2) -- (u2);
        \draw (v3) -- (u3);
        \draw (v3) -- (u4);
        \draw (v4) -- (u3);
        \draw (v4) -- (u4);

        \draw[dashed, line width=0.3mm, red] (v1) -- (u3);
        \draw[dashed, line width=0.3mm, red] (v1) -- (u4);
        \draw[dashed, line width=0.3mm, red] (v2) -- (u3);
        \draw[dashed, line width=0.3mm, red] (v2) -- (u4);
        \draw[dashed, line width=0.3mm, blue] (v3) -- (u2);
        \draw[dashed, line width=0.3mm, blue] (v3) -- (u1);
        \draw[dashed, line width=0.3mm, blue] (v4) -- (u1);
        \draw[dashed, line width=0.3mm, blue] (v4) -- (u2);
      \end{tikzpicture}
      \caption{}\label{2case5a}
    \end{subfigure}
    \quad
    \centering
    \begin{subfigure}[t]{2.5in}
      \centering
      \begin{tikzpicture}
        \def\hor{3cm}
        \def\v{-1cm}

        \draw[fill=black] (0*\hor, 0*\v) circle (.1cm) node (v1) {} node[left] {$x_1$};
        \draw[fill=black] (1*\hor, 0*\v) circle (.1cm) node (u1) {} node[right] {$y_1$};
        \draw[fill=black] (0*\hor, 1*\v) circle (.1cm) node (v2) {} node[left] {$x_2$};
        \draw[fill=black] (1*\hor, 1*\v) circle (.1cm) node (u2) {} node[right] {$y_2$};
        \draw[fill=black] (0*\hor, 2*\v) circle (.1cm) node (v3) {} node[left] {$x_3$};
        \draw[fill=black] (1*\hor, 2*\v) circle (.1cm) node (u3) {} node[right] {$y_3$};
        \draw[fill=black] (0*\hor, 3*\v) circle (.1cm) node (v4) {} node[left] {$x_4$};
        \draw[fill=black] (1*\hor, 3*\v) circle (.1cm) node (u4) {} node[right] {$y_4$};

	\draw (v1) -- (u1);
        \draw (v1) -- (u2);
        \draw (v2) -- (u1);
        \draw (v2) -- (u2);
        \draw (v3) -- (u3);
        \draw (v3) -- (u4);
        \draw (v4) -- (u3);
        \draw (v4) -- (u4);

        \draw (v1) -- (u3);
        \draw (v1) -- (u4);
        \draw (v2) -- (u3);
        \draw (v2) -- (u4);
        \draw (v3) -- (u2);
      \end{tikzpicture}
      \caption{}\label{2case5b}
    \end{subfigure}
    \caption{}
  \end{figure}

  \noindent where the two cycles are $x_1, y_1, x_2, y_2$ and $x_3, y_3, x_4, y_4$. The dashed lines form the two connected components of the bipartite complement. In order to minimally satisfy our assumptions, we must add one entire connected component from the bipartite complement and at least a single edge from the other yielding the graph in Figure~\ref{2case5b}. If less than a full component of the bipartite complement is included, there is an induced subgraph whose bipartite complement is not essentially connected, violating Theorem~\ref{GraphConditions}. If the extra edge is not present, then the bipartite complement contains a 4-cycle, also violating Theorem~\ref{GraphConditions}.

  Calling this graph in Figure~\ref{2case5b} $H$, one computes

  \begin{align*}
    I_H = (&e_{12}e_{21}-e_{11}e_{22},
             e_{13}e_{21}-e_{11}e_{23},
             e_{14}e_{21}-e_{11}e_{24},
             e_{13}e_{22}-e_{12}e_{23},
             e_{14}e_{22}-e_{12}e_{24},\\
             &e_{14}e_{23}-e_{13}e_{24},
             e_{13}e_{32}-e_{12}e_{33},
             e_{14}e_{32}-e_{12}e_{34},
             e_{23}e_{32}-e_{22}e_{33},
             e_{24}e_{32}-e_{22}e_{34},\\
             &e_{14}e_{33}-e_{13}e_{34},
             e_{24}e_{33}-e_{23}e_{34},
             e_{14}e_{43}-e_{13}e_{44},
             e_{24}e_{43}-e_{23}e_{44},
             e_{34}e_{43}-e_{33}e_{44}).
  \end{align*}

  \noindent Once again, the corresponding Koszul syzygy is a sum of linear syzygies:

  \scalebox{.85}{
    \vbox{
      \[ \mat{-e_{33}e_{44}+e_{34}e_{43} \\ 0 \\ 0 \\ 0 \\ 0 \\ 0 \\ 0 \\ 0 \\ 0 \\ 0 \\ 0 \\ 0 \\ 0 \\ 0 \\e_{11}e_{22} - e_{12} e_{21}}
        = e_{11}\mat{0 \\ 0 \\ 0 \\ 0 \\ 0 \\ 0 \\ 0 \\ 0 \\ 0 \\ 0 \\-e_{44} \\ e_{43} \\ 0 \\ -e_{32} \\ e_{22}}
        - e_{21}\mat{0\\0\\0\\0\\0\\-e_{44}\\e_{43}\\0\\0\\-e_{32}\\0\\0\\0\\0\\e_{12}}
        + e_{44}\mat{-e_{33} \\ e_{32} \\ 0\\0\\0\\-e_{21}\\0\\0\\0\\0\\e_{11}\\0\\0\\0\\0}
        - e_{43}\mat{-e_{34} \\ 0 \\ e_{32} \\ 0 \\ 0 \\0 \\ -e_{21} \\ 0 \\ 0 \\ 0 \\ 0 \\ e_{11} \\ 0 \\ 0 \\ 0}
        + e_{32}\mat{0 \\ -e_{44} \\ e_{43} \\ 0 \\ 0 \\ 0 \\ 0 \\ 0 \\ 0 \\ -e_{21} \\ 0 \\ 0 \\ 0 \\ e_{11} \\ 0}.
      \]
    }}


  Since all possible configurations of two distinct 4-cycles produce non-minimal Koszul syzygies, we can conclude that  $I_G$ satisfies $\mathbf{N}_2$.

\end{proof}

\begin{rmk} The requirement that $\delta(G) \ge 2$ is  not restrictive.  If $G$ has an isolated (degree 0) vertex, then removing it does not change the edge ring $k[G]$.  If $G$ has a vertex of degree 1, then removing the adjacent edge merely reduces the embedding dimension of the ring of $I_G$; it does not change the minimal cycles in $G$ and thus does not affect the structure of $I_G$ or its resolution.  However, requiring $\delta(G) \ge 2$ makes our main result, which refers to $\bar{G}$, much easier to state and apply in practice.
\end{rmk}

\begin{rmk}A bipartite graph $G$ with $\delta(G) \geq 2$ such that every cycle of length $\geq 6$ has a chord can fail to satisfy Theorem~\ref{N2entire} in three different ways: $\overline{G}$ could be a tree of diameter greater than 3, $\overline{G}$ could be disconnected, or $\overline{G}$ could contain a cycle.   Figure~\ref{3graphs} gives an example of each type.  Dashed lines represent edges in $\bar{G}$.  The corresponding Betti tables are listed below showing the corresponding toric edge ideals are not linearly presented.

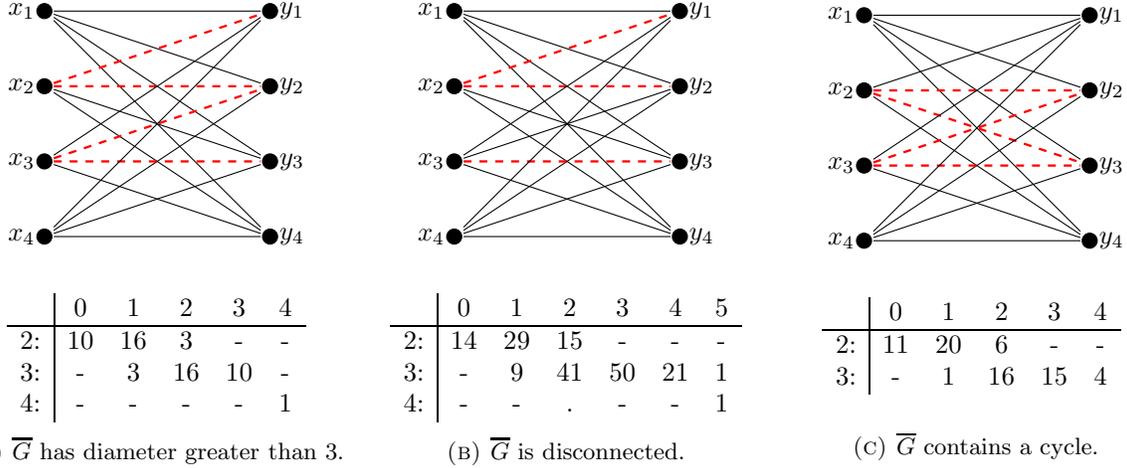
\begin{figure}[H]
\centering
\begin{subfigure}{.33\textwidth}
      \centering
      \begin{tikzpicture}
        \def\hor{3cm}
        \def\v{-1cm}

        \draw[fill=black] (0*\hor, 0*\v) circle (.1cm) node (v1) {} node[left] {$x_1$};
        \draw[fill=black] (1*\hor, 0*\v) circle (.1cm) node (u1) {} node[right] {$y_1$};
        \draw[fill=black] (0*\hor, 1*\v) circle (.1cm) node (v2) {} node[left] {$x_2$};
        \draw[fill=black] (1*\hor, 1*\v) circle (.1cm) node (u2) {} node[right] {$y_2$};
        \draw[fill=black] (1*\hor, 2*\v) circle (.1cm) node (u3) {} node[right] {$y_3$};
        \draw[fill=black] (0*\hor, 2*\v) circle (.1cm) node (v4) {} node[left] {$x_3$};
        \draw[fill=black] (1*\hor, 3*\v) circle (.1cm) node (u4) {} node[right] {$y_4$};
        \draw[fill=black] (0*\hor, 3*\v) circle (.1cm) node (v5) {} node[left] {$x_4$};

        \draw (v1) -- (u3);
        \draw (v1) -- (u4);
        \draw (v1) -- (u1);
        \draw (v1) -- (u2);
        \draw[red, dashed, line width=0.3mm] (v2) -- (u1);
        \draw[red, dashed, line width=0.3mm] (v2) -- (u2);
        \draw (v2) -- (u3);
        \draw (v2) -- (u4);
        \draw[red, dashed, line width=0.3mm] (v4) -- (u2);
        \draw (v5) -- (u2);
        \draw (v4) -- (u1);
        \draw[red, dashed, line width=0.3mm] (v4) -- (u3);
        \draw (v4) -- (u4);
        \draw (v5) -- (u1);
        \draw (v5) -- (u3);
        \draw (v5) -- (u4);
      \end{tikzpicture}
      
            \vspace{.5cm}

      \begin{tabular}{c|ccccc}
     &0&1&2&3&4\\
     \hline
     \text{2:}&10&16&3&\text{-}&\text{-}\\\text{3:}&\text{-}&3&16&10&\text{-}\\\text{4:}&\text{-}&\text{-}&\text{-}&\text{-}&
     1\\\end{tabular}
      \caption{$\overline{G}$ has diameter greater than 3.}
    \end{subfigure}%
    \begin{subfigure}{.33\textwidth}
      \centering
      \begin{tikzpicture}
        \def\hor{3cm}
        \def\v{-1cm}

        \draw[fill=black] (0*\hor, 0*\v) circle (.1cm) node (v1) {} node[left] {$x_1$};
        \draw[fill=black] (1*\hor, 0*\v) circle (.1cm) node (u1) {} node[right] {$y_1$};
        \draw[fill=black] (0*\hor, 1*\v) circle (.1cm) node (v2) {} node[left] {$x_2$};
        \draw[fill=black] (1*\hor, 1*\v) circle (.1cm) node (u2) {} node[right] {$y_2$};
        \draw[fill=black] (1*\hor, 2*\v) circle (.1cm) node (u3) {} node[right] {$y_3$};
        \draw[fill=black] (0*\hor, 2*\v) circle (.1cm) node (v4) {} node[left] {$x_3$};
        \draw[fill=black] (1*\hor, 3*\v) circle (.1cm) node (u4) {} node[right] {$y_4$};
        \draw[fill=black] (0*\hor, 3*\v) circle (.1cm) node (v5) {} node[left] {$x_4$};

        \draw (v1) -- (u3);
        \draw (v1) -- (u4);
        \draw (v1) -- (u1);
        \draw (v1) -- (u2);
        \draw[red, dashed, line width=0.3mm] (v2) -- (u1);
        \draw[red, dashed, line width=0.3mm] (v2) -- (u2);
        \draw (v2) -- (u3);
        \draw (v2) -- (u4);
        \draw (v4) -- (u2);
        \draw (v5) -- (u2);
        \draw (v4) -- (u1);
        \draw[red, dashed, line width=0.3mm] (v4) -- (u3);
        \draw (v4) -- (u4);
        \draw (v5) -- (u1);
        \draw (v5) -- (u3);
        \draw (v5) -- (u4);
      \end{tikzpicture}
      
                  \vspace{.5cm}

      \begin{tabular}{c|cccccc}
      &0&1&2&3&4&5\\\hline\text{2:}&14&29&15&\text{-}&\text{-}&\text{-}\\\text{3:}&\text{-}&9&41&50&21&1\\\text{4:}&\text{-}&\text{-}&\text
      {.}&\text{-}&\text{-}&1\\\end{tabular}
      \caption{$\overline{G}$ is disconnected.}
    \end{subfigure}%
    \begin{subfigure}{.33\textwidth}
      \centering
      \begin{tikzpicture}
        \def\hor{3cm}
        \def\v{-1cm}

        \draw[fill=black] (0*\hor, 0*\v) circle (.1cm) node (v1) {} node[left] {$x_1$};
        \draw[fill=black] (1*\hor, 0*\v) circle (.1cm) node (u1) {} node[right] {$y_1$};
        \draw[fill=black] (0*\hor, 1*\v) circle (.1cm) node (v2) {} node[left] {$x_2$};
        \draw[fill=black] (1*\hor, 1*\v) circle (.1cm) node (u2) {} node[right] {$y_2$};
        \draw[fill=black] (1*\hor, 2*\v) circle (.1cm) node (u3) {} node[right] {$y_3$};
        \draw[fill=black] (0*\hor, 2*\v) circle (.1cm) node (v4) {} node[left] {$x_3$};
        \draw[fill=black] (1*\hor, 3*\v) circle (.1cm) node (u4) {} node[right] {$y_4$};
        \draw[fill=black] (0*\hor, 3*\v) circle (.1cm) node (v5) {} node[left] {$x_4$};

        \draw (v1) -- (u3);
        \draw (v1) -- (u4);
        \draw (v1) -- (u1);
        \draw (v1) -- (u2);
        \draw (v2) -- (u1);
        \draw[red, dashed, line width=0.3mm] (v2) -- (u2);
        \draw[red, dashed, line width=0.3mm] (v2) -- (u3);
        \draw (v2) -- (u4);
        \draw[red, dashed, line width=0.3mm] (v4) -- (u2);
        \draw (v5) -- (u2);
        \draw (v4) -- (u1);
        \draw[red, dashed, line width=0.3mm] (v4) -- (u3);
        \draw (v4) -- (u4);
        \draw (v5) -- (u1);
        \draw (v5) -- (u3);
        \draw (v5) -- (u4);
      \end{tikzpicture}
      
                  \vspace{.5cm}

      \begin{tabular}{c|ccccc}
      &0&1&2&3&4\\
      \hline
      \text{2:}&11&20&6&\text{-}&\text{-}\\\text{3:}&\text{-}&1&16&15&4\\
      \end{tabular}
\vspace{.31cm}
      
      \caption{$\overline{G}$ contains a cycle.}
    \end{subfigure}%
    \caption{Three examples of bipartite graphs with non-linearly presented toric edge ideals and their Betti tables}\label{3graphs}
    \end{figure}
    
    \end{rmk}

Consider the complete bipartite graph $K_{5,5}$.
    Then $\beta_{2, 5}(I_{K_{5,5}}) = 0$ if and only if the characteristic of $k$ is not 3.  
    In particular, in characteristic other than 3, $I_{K_{5,5}}$ has partial graded Betti table\\

    \begin{center}
      \begin{tabular}{c|cccc}
       &$0$&$1$&$2$ \\ \hline
        \text{$2$:}&$100$&$800$&$3075$& $\cdots$ \\
      \end{tabular},\\
    \end{center}

 \noindent   while in characteristic 3, $I_{K_{5,5}}$ has partial graded Betti table\\

    \begin{center}
      \begin{tabular}{c|cccc}
       &$0$&$1$&$2$ \\ \hline
        \text{$2$:}&$100$&$800$&$3075$& $\cdots$ \\
        \text{$3$:}&\text{-}&\text{-}&$1$ & $\cdots$ \\
      \end{tabular}.\\
    \end{center}
    
\noindent More generally, Hashimoto \cite{H} showed that the ideal $I_t(\mathbf{M})$ of $t \times t$ minors of a generic $m \times n$ matrix  has the same third Betti numbers independent of the characteristic if $t = 1$ or if $t \ge \min\{m,n\} - 2$, whereas the third Betti number is larger in characteristic $3$ if $2 \le t \le \min\{m,n\} - 3$.  The ideal $I_{K_{5,5}}$, corresponding to the $2 \times 2$ minors of a generic $5 \times 5$ matrix, is thus the minimal situation where $\beta_3(I_{K_{m,n}})$ depends on the characteristic.  While this example shows that characterizing toric edge ideals of bipartite graphs satisfying $\mathbf{N}_3$ must refer to the characteristic of the coefficient field, we show below that this is the only obstruction.

\begin{thm}\label{N3} Let $G$ be a bipartite graph with $\delta(G) \ge 2$.  The ideal $I_G$ satisfies condition $\mathbf{N}_3$ if and only if $G = K_{m,n}$ for some $m,n$, unless the characteristic of $k$ is $3$ and $\min\{m, n\} \geq 5$. 
\end{thm}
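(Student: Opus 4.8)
The plan is to reduce condition $\mathbf{N}_3$ to condition $\mathbf{N}_2$ and then feed the obstruction graphs of Section~\ref{obstruct} into Theorem~\ref{InducedGraphBetti}. Since $\mathbf{N}_3$ implies $\mathbf{N}_2$, Theorem~\ref{N2entire} shows that if $I_G$ satisfies $\mathbf{N}_3$ then $\overline{G}$ is essentially a tree of diameter at most $3$, so by Proposition~\ref{tree3} (together with the elementary classification of trees of diameter $\le 3$ as single edges, stars, and double stars) $(\overline{G})_1$ is either empty or a star or a double star. The whole argument then splits into three claims: (a) if $(\overline{G})_1$ is nonempty then $I_G$ fails $\mathbf{N}_3$, forcing $G = K_{m,n}$; (b) if $G = K_{m,n}$ with $\cha(k) = 3$ and $\min\{m,n\} \ge 5$ then $I_G$ fails $\mathbf{N}_3$; and (c) if $G = K_{m,n}$ and we are not in the situation of (b), then $I_G$ satisfies $\mathbf{N}_3$. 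Claims (a) and (b) give the ``only if'' direction, and (c) gives the ``if'' direction.

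For (a), the key observation is that the graph $H$ of Lemma~\ref{N3obstruction} (which is $K_{3,3}$ with one edge deleted, and has $\beta_{2,5}(I_H) \neq 0$) occurs as an induced subgraph of any such $G$. Indeed, choose an edge $\{a,b\}$ of $\overline{G}$ with $a \in X$, $b \in Y$, taking $a$ and $b$ to be the centers of the (double) star $(\overline{G})_1$. Any vertex of $X$ lying outside $V((\overline{G})_1)$ is isolated in $\overline{G}$, hence adjacent in $G$ to every vertex of $Y$; symmetrically on the other side. The hypothesis $\delta(G) \ge 2$ gives $\deg_G(b) = |X| - |V((\overline{G})_1)\cap X| \ge 2$ and likewise $\deg_G(a) \ge 2$, so we may pick distinct $x_2,x_3 \in X\setminus V((\overline{G})_1)$ and $y_2,y_3 \in Y\setminus V((\overline{G})_1)$. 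A short check of which of the nine pairs among $\{a,x_2,x_3\}\times\{b,y_2,y_3\}$ are edges then shows the induced subgraph on these six vertices is exactly $K_{3,3}$ with $\{a,b\}$ removed, i.e.\ $H$. Theorem~\ref{InducedGraphBetti} now yields $\beta_{2,5}(I_G) \neq 0$, so $\mathbf{N}_3$ fails. For (b), $K_{5,5}$ is an induced subgraph of $K_{m,n}$ whenever $\min\{m,n\}\ge 5$, and $\beta_{2,5}(I_{K_{5,5}})\neq 0$ in characteristic $3$ by Hashimoto's computation \cite{H}, so Theorem~\ref{InducedGraphBetti} again forces $\beta_{2,5}(I_{K_{m,n}})\neq 0$.

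For (c), note first that $\overline{K_{m,n}}$ has no edges, so $\mathbf{N}_2$ holds by Theorem~\ref{N2entire}; thus $\beta_{i,j}(I_{K_{m,n}}) = 0$ for $i \le 1$ and $j > i+2$, and it remains only to verify $\beta_{2,j}(I_{K_{m,n}}) = 0$ for all $j > 4$. Rewriting this as $\beta_{3,j}(S/I_{K_{m,n}}) = 0$ for $j > 4$, we argue that these numbers equal their characteristic-$0$ values: by \cite{HK} the entire Betti table of $S/I_{K_{m,n}}$ is characteristic-independent when $\min\{m,n\}\le 4$, and by \cite{H} the third Betti numbers agree with the characteristic-$0$ values whenever $\cha(k)\neq 3$. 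In characteristic $0$, the resolution of the Segre ring (Lascoux, Pragacz--Weyman; see \cite{L,PW,R}) is linear through the third step, i.e.\ $\beta_{3,j}(S/I_{K_{m,n}}) = 0$ for $j>4$; hence the same vanishing holds here and $I_{K_{m,n}}$ satisfies $\mathbf{N}_3$. (When $\min\{m,n\}=2$ one may instead invoke directly the linear Eagon--Northcott resolution of $I_{K_{2,n}}$.)

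The hardest part is not the combinatorics of (a), which is just careful bookkeeping of which vertices of $G$ fall outside $(\overline{G})_1$, but the commutative-algebra input to (c): one needs both that the Segre resolution is linear through step $3$ in characteristic $0$ and that the relevant third Betti numbers are characteristic-independent away from characteristic $3$, which is precisely the content of the cited work of Lascoux, Pragacz--Weyman, Hashimoto--Kurano, and Hashimoto. The single point where the characteristic genuinely intervenes is the nonvanishing of $\beta_{2,5}(I_{K_{5,5}})$ in characteristic $3$, which is what makes the ``unless'' clause necessary and, via Theorem~\ref{InducedGraphBetti}, sufficient to rule out $\mathbf{N}_3$ for every $K_{m,n}$ with $\min\{m,n\}\ge 5$ in that characteristic.
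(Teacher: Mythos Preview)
Your argument for (a) is essentially the paper's: both locate the obstruction graph $H$ of Lemma~\ref{N3obstruction} as an induced subgraph by taking the central edge $\{x,y\}$ of $(\overline{G})_1$ and two further $G$-neighbors of each of $x$ and $y$. Your phrasing in terms of vertices lying outside $V((\overline{G})_1)$ is a slight repackaging of the paper's choice, but the content is identical. Part (b) also matches the paper.

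The gap is in (c). You assert that by \cite{H} the third Betti numbers of $S/I_{K_{m,n}}$ agree with their characteristic-$0$ values whenever $\cha(k)\neq 3$. Hashimoto's result, as the paper summarizes it, only establishes characteristic-independence of the third Betti numbers when $\min\{m,n\}\le 4$; for $\min\{m,n\}\ge 5$ it shows the third Betti number is \emph{larger} in characteristic $3$, not that characteristic $3$ is the unique bad prime. So \cite{H} by itself does not give $\beta_{2,j}(I_{K_{m,n}}) = 0$ for all $j\ge 5$ when $m,n\ge 5$ and $\cha(k)\neq 3$, and your argument for (c) is incomplete in exactly this range.

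The paper closes this gap by first invoking \cite[Main Theorem (2)]{ACI}: since $S/I_{K_{m,n}}$ is Koszul and $I_{K_{m,n}}$ is linearly presented, one has $\beta_{2,j}(I_{K_{m,n}}) = 0$ automatically for $j\ge 6$, so failure of $\mathbf{N}_3$ can only occur at $\beta_{2,5}$. Theorem~\ref{InducedGraphBetti} then reduces a nonvanishing $\beta_{2,5}$ to an induced complete bipartite subgraph on at most $10$ vertices, i.e.\ some $K_{m',n'}$ with $m'+n'\le 10$; every such graph with $\min\{m',n'\}\le 4$ is handled by \cite{HK} and \cite{ABW}, leaving only $K_{5,5}$, which the paper treats directly via the Lascoux resolution in characteristic $0$, a degree bound to cover all primes $p>20$, and a finite Macaulay2 check for the remaining small primes. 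Your outline is missing this reduction, and the blanket appeal to \cite{H} does not replace it.
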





\begin{proof}
First suppose that $G = K_{5,5}$.  In characteristic $0$, $I_G$ satisfies $\mathbf{N}_3$ by considering the resolution of Lascoux or Pragacz and Weyman.  It follows by considering the number of boxes in the last partition of the Lascoux complex (20 in this case), the resolution is the same in characteristic $p > 20$ as it is in characteristic $0$.  A quick Macaulay2 \cite{M2} calculation shows that $p = 3$ is the only characteristic less than $20$ which in which $I_G$ fails to satisfy $\mathbf{N}_3$.  

Now let $G = K_{m,n}$ for arbitrary $m,n$.  If $\min\{m,n\} \le 4$, then $I_G$ has the same graded Betti numbers as its characteristic 0 Lascoux resolution by \cite{HK} and \cite{ABW}.  Thus we may assume $\min\{m,n\} \ge 5$.  If the $\mathrm{char}(k) = 3$, then $G$ has $K_{5,5}$ as an induced subgraph and so does not satisfy $\mathbf{N}_3$ as above.  Thus we may assume $\mathrm{char}(k) \neq 3$.  If $I_G$ does not satisfy $\mathbf{N}_3$, then $\beta_{2,j}(I_G) \neq 0$ for some $j \ge 5$.  Since $S/I_G$ is Koszul and since $I_G$ is linearly presented, it follows from \cite[Main Theorem (2)]{ACI} that $\beta_{2,j}(I_G) = 0$ for all $j \ge 6$.  Thus if $I_G$ fails to satisfy $\mathbf{N}_3$, we must have $\beta_{2,5}(I_G) \neq 0$.

Now by Theorem~\ref{InducedGraphBetti} there must be an induced subgraph $H$ of $G$ with at most $10$ vertices that fails $\mathbf{N}_3$.  Since $H$ is an induced subgraph, it is also a complete bipartite graph, say $K_{m',n'}$ with $m'+n' = 10$.  If $\min\{m',n'\} \le 4$, then $I_H$ has the same graded Betti numbers as that characteristic $0$ Lascoux resolution, which satisfies $\mathbf{N}_3$ by \cite{HK} and \cite{ABW}.  Thus is suffices to consider the case $H = K_{5,5}$, as we have above.

Now suppose $G$ is not a complete bipartite graph.
  If $I_G$ does not satisfy $\mathbf{N}_2$, then it doesn't satisfy $\mathbf{N}_3$, so it is enough to consider a graph $G$ such that $I_G$ satisfies $\mathbf{N}_2$. By Theorem~\ref{N2entire}, $\overline{G}$ is essentially a tree of diameter at most 3. In particular, since $G$ is not a complete graph, $\overline{G}$ has at least one edge. Moreover, since $\bar{G}$ is a tree of diameter at most $3$, there is an edge with vertices $x$ and $y$ such that every other edge of $\bar{G}$ is adjacent to this edge.  Since $\delta(G) \geq 2$, there exist vertices $x', x'', y', y''$ such that $\{x, y'\}, \{x, y''\}, \{y, x'\},$ and $\{y, x''\}$ are all edges in $G$. Since every edge in $\bar{G}$ is adjacent to $\{x,y\}$, we must also have the edges $\{x', y'\}, \{x', y''\}, \{x'', y'\},$ and $\{x'', y''\}$ in $G$. So the induced subgraph on $\{x, x', x'', y, y', y''\}$ is the graph $H$ in Lemma~\ref{N3obstruction}, which satisfies $\beta_{2, 5}(I_H) \neq 0$. By Theorem~\ref{InducedGraphBetti}, $\beta_{2, 5}(I_G) \neq 0$, so $I_G$ does not satisfy $\mathbf{N}_3$.
\end{proof}

The preceding work yields the following surprising characterization of toric edge ideals with linear free resolutions.  That $I_{K_{2,n}}$ has a linear free resolution is well-known and follows from the Eagon-Northcott resolution.  Ohsugi and Hibi \cite[Theorem 4.6]{OH1} showed that $I_G$ has a linear free resolution if and only if $G = K_{2,n}$ for some $n$.  The content of the following theorem is that the only bipartite graphs satisfying condition $\mathbf{N}_4$ are complete bipartite graphs $K_{2,n}$ for some $n$ and that condition $\mathbf{N}_4$ is sufficient to guarantee linear free resolutions regardless of the characteristic.

\begin{thm}\label{N4} Let $G $ be a bipartite graph with $\delta(G) \ge 2$.  The ideal $I_G$ satisfies property $\mathbf{N}_4$ if and only if $G = K_{2,n}$ for some $n$.  In this case, $I_G$ has a linear free resolutions and thus satisfies $\mathbf{N}_p$ for all $p \ge 1$.  
\end{thm}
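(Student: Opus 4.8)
The plan is to bootstrap from the classification of $\mathbf{N}_3$ in Theorem~\ref{N3} and then eliminate every complete bipartite graph other than $K_{2,n}$ using the $K_{3,3}$ obstruction of Lemma~\ref{N4obstruction}.

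For the ``if'' direction, suppose $G = K_{2,n}$. Then $I_G$ is the ideal of $2\times 2$ minors of a generic $2\times n$ matrix of linear forms, hence is resolved by the Eagon--Northcott complex, which is linear in every characteristic. Consequently $I_G$ satisfies $\mathbf{N}_p$ for all $p \ge 1$, in particular $\mathbf{N}_4$; this simultaneously establishes the final assertion of the theorem about linear resolutions.

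For the ``only if'' direction, assume $I_G$ satisfies $\mathbf{N}_4$. Then a fortiori $I_G$ satisfies $\mathbf{N}_3$, so Theorem~\ref{N3} gives $G = K_{m,n}$ for some $m,n$; the hypothesis $\delta(G) \ge 2$ rules out the star graphs $K_{1,n}$, so $m,n \ge 2$. It remains to show $\min\{m,n\} \le 2$. Suppose for contradiction that $m,n \ge 3$. Then $K_{3,3}$ is an induced subgraph of $K_{m,n}$ on $6 = 2\cdot 3$ vertices, and by Lemma~\ref{N4obstruction} we have $\beta_{3,6}(I_{K_{3,3}}) \neq 0$ in every characteristic (the Gulliksen--Neg\aa rd resolution used there is characteristic-free). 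Applying Theorem~\ref{InducedGraphBetti} with $i = 3$, $j = 3$ yields $\beta_{3,6}(I_G) \neq 0$. But $\mathbf{N}_4$ forces $\beta_{3,j}(I_G) = 0$ for all $j > 5$, a contradiction. Hence $\min\{m,n\} = 2$, i.e. $G = K_{2,n}$.

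I do not anticipate a real obstacle: essentially all the work is already done in Theorem~\ref{N3} and Lemma~\ref{N4obstruction}, and the argument is a short assembly. The only points requiring care are the bookkeeping with the $\mathbf{N}_p$ indexing (checking that $\beta_{3,6} \neq 0$ genuinely violates $\mathbf{N}_4$, which it does since $6 > 3+2$) and the observation that $\delta(G) \ge 2$ excludes $K_{1,n}$, so that the conclusion $\min\{m,n\} \le 2$ is exactly $G = K_{2,n}$.
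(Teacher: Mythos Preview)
Your proposal is correct and follows essentially the same route as the paper: reduce to complete bipartite graphs via Theorem~\ref{N3}, then use the $K_{3,3}$ obstruction (Lemma~\ref{N4obstruction}) together with Theorem~\ref{InducedGraphBetti} to rule out $m,n \ge 3$. One small slip: when invoking Theorem~\ref{InducedGraphBetti} you should take $j=6$ (not $j=3$), since the relevant Betti number is $\beta_{3,6}$ and the vertex bound in that theorem is $2j=12$, which $K_{3,3}$ comfortably satisfies.
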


\begin{proof}
  If $I_G$ does not satisfy $\mathbf{N}_3$, then it doesn't satisfy $\mathbf{N}_4$, so it is enough to consider a graph $G$ such that $I_G$ satisfies $\mathbf{N}_3$. So we can assume $G$ is a complete bipartite graph. If $G = K_{m, n}$ with $m, n \geq 3$, then it has $C = K_{3, 3}$ as an induced subgraph. By Lemma~\ref{N4obstruction}, $\beta_{3,6}(I_C) \neq 0$, so by Theorem~\ref{InducedGraphBetti} $\beta_{3,6}(I_G) \neq 0$.
\end{proof}

Keeping in mind Theorem~\ref{KoszulSchreyer}, we see that these proofs relied on the fact that $I_G$ was generated by a Gr\"obner basis of quadrics. If we attempt to directly extend these results to toric edge ideals of arbitrary (not necessarily bipartite) graphs, we lose the power of this result, even in the case where $I_G$ is generated by quadrics. For example the graph pictured below

\begin{figure}[H]
      \centering
      \begin{tikzpicture}
        \def\hor{2cm}
        \def\v{2cm}
        \def\a{90}

        \draw[fill=black] ({\hor*cos(0+\a)}, {\v*sin(0+\a)}) circle (.1cm) node (v1) {} node[above] {};
        \draw[fill=black] ({\hor * cos(72+\a) } , {\v * sin(72+\a)}) circle (.1cm) node (v5) {} node[left] {};
        \draw[fill=black] ({\hor * cos(288+\a)}, {\v * sin(288+\a)}) circle (.1cm) node (v2) {} node[right] {};
        \draw[fill=black] ({\hor * cos(144+\a)}, {\v * sin(144+\a)}) circle (.1cm) node (v4) {} node[below left] {};
        \draw[fill=black] ({\hor * cos(216+\a)}, {\v * sin(216+\a)}) circle (.1cm) node (v3) {} node[below right] {};
        \draw[fill=black] (0,0) circle (.1cm) node (v6) {} node[below = .1cm] {};

        \draw (v1) -- (v2);
        \draw (v2) -- (v3);
        \draw (v3) -- (v4);
        \draw (v4) -- (v5);
        \draw (v5) -- (v1);
        \draw (v1) -- (v6);
        \draw (v2) -- (v6);
        \draw (v3) -- (v6);
        \draw (v4) -- (v6);
        \draw (v5) -- (v6);
      \end{tikzpicture}
    \end{figure}
  \noindent  has toric edge ideal generated by quadrics \cite[Example 5.28]{HHO} but  has no Gr\"obner basis of quadrics with respect to any monomial ordering \cite[Example 1.18]{HHO}.

\section{Linear Syzygies of Polyominoes}\label{poly}

Polyominoes and their associated ideals were introduced by Qureshi, where it was shown that the ring associated to a convex polyomino is normal and Cohen-Macaulay \cite[Theorem 2.2]{Q}.  Later work by Ene, Herzog, and Hibi showed that the defining ideals are generated by a quadratic Gr\"obner basis by viewing them as toric ideals associated to bipartite graphs \cite[Proposition 2.3]{EHH}.  They also give a characterization of polynominoes whose associated ideals are linearly presented \cite[Theorem 3.1]{EHH}.  However, when we translate our result on bipartite graphs whose toric edge ideals are linearly presented, we discovered a discrepancy; in particular, there are polyominoes that are not linearly presented that satisfy \cite[Theorem 3.1]{EHH}.  The purpose of this section is to then translate our results on toric edge ideals of bipartite graphs into results on convex polynomino ideals satisfying coditions $\mathbf{N}_p$ for all $p$, thereby correcting the error in the above theorem.  We begin with some notation.

If $a,b \in \mathbb{N}^2$ with $a \le b$ under the natural partial order, the set $[a,b] = \{c \in \N^2 \,|\, a \le c \le b\}$ is called an interval.  If $b = a + (1,1)$, then $[a,b]$ is called a cell.    The edges of the cell $C = [a, a+(1,1)]$ are the sets $\{a, a+(0,1)\}, \{a + (0,1), a+(1,1)\}, \{a + (1,1), a+(1,0)\}, \{a + (1,0), a\}$ and the points $a, a+(0,1), a+(1,1), a+(1,0)$ are the vertices of $C$.   The vertex $a$ is called the lower left corner of $C$.   Let $\mathcal{P}$ be a finite collections of cells.  The set of vertices $V(\mathcal{P})$ is the union of the sets of vertices of all cells in $\mathcal{P}$.   If $C,D \in \mathcal{P}$, then $C$ and $D$ are connected  if there is a sequence of cells of $\mathcal{P}$  given by $C = C_1,\ldots,C_t =D$ such that $C_i \cap C_{i+1}$ is an edge of $C_i$ for $i =1\ldots,t-1$.  A collection of cells $\mathcal{P}$ is a polyomino if any two of its cells are connected.  Two polyominos are isomorphic is they are mapped to each other by a finite sequence of translations, rotations, and reflections.  A polyomino $\mathcal{P}$ is row convex if given any any two cells of $\mathcal{P}$ with lower left corners $(i_1,j)$ and $(i_2,j)$ with $i_1 < i_2$, all of the cells with lower left corners $(i,j)$ with $i_1 < i < i_2$ are also in $\mathcal{P}$.  Similarly, one defines $\mathcal{P}$ to be column convex if given any two cells of $\mathcal{P}$ with lower left corners $(i,j_1)$ and $(i,j_2)$ with $j_1 < j_2$, one has that all the cells with lower left corners $(i,j)$ with $j_1 < j < j_2$ are in $\mathcal{P}$.  Finally $\mathcal{P}$ is convex if it is row convex and column convex.

Now let $\mathcal{P}$ be a polyomino.  We may rotate and translate $\mathcal{P}$ until $[(1,1),(m,n)]$ is the smallest interval containing $\mathcal{P}$.  Fix a field $k$ and a polynomial ring $S = k[x_{i,j}\,|\,1 \le i \le m, 1 \le j \le n]$.  The polyomino ideal $I_\mathcal{P}$ is the ideal of $S$ generated by the $2 \times 2$ minors $x_{ij}x_{kl} - x_{il}x_{kj}$ for with $[(i,j),(k,l)] \subset V(\mathcal{P})$.  

\begin{rmk}
In \cite[Proposition 2.3]{EHH}, it is shown every convex polyomino ideal is the toric edge ideal of a bipartite graph with a quadratic Gr\"obner basis and thus satisfies condition $\mathbf{N}_1$.  One identifies the vertical line segments with one set of vertices $x_i$ and the horizontal line segments with another set of vertices $y_j$; then one draws an edge between $x_i$ and $y_j$ if the corresponding line segments intersect.  Thus every convex polynomino corresponds to a bipartite graph such that every cycle of length $\ge 6$ has a chord.  

Here we remark that the converse does not hold; that is, not every quadratically generated toric edge ideal is the polyomino ideal for some convex polyomino.  The ideal $I_{H^{(1)}}$ which corresponds to a complete intersection of two quadratic binomials is clearly not associated to any convex polyomino, which cannot have exactly 2 minimal generators.  However, the disconnected collection of cells in Figure~\ref{cells} has an isomorphic  ideal of inner minors to $I_{H^{(1)}}$.
\begin{figure}[H]
\centering

\begin{tikzpicture}
\def\size{1cm}
\def\hor{\size}
\def\v{-1*\size}

\draw [line width=0.35mm] (0*\hor, 0) -- (1*\hor, 0);
\draw [line width=0.35mm] (1*\hor, 2*\v) -- (1*\hor, 0);
\draw [line width=0.35mm] (1*\hor, 2*\v) -- (2*\hor, 2*\v);
\draw [line width=0.35mm] (2*\hor, 1*\v) -- (2*\hor, 2*\v);
\draw [line width=0.35mm] (2*\hor, 1*\v) -- (0*\hor, 1*\v);
\draw [line width=0.35mm] (0*\hor, 0*\v) -- (0*\hor, 1*\v);



\end{tikzpicture}
\caption{}\label{cells}
\end{figure}

\end{rmk}

\begin{prop} Let $G = (V,E)$ is a connected bipartite graph with $\delta(G) \ge 2$ such that every chord with length $\ge 6$ has a chord and such that $\overline{G}$ is essentially a tree of diameter at most $3$.  Then there is a convex polynomo $\mathcal{P}$ such that $I_G$ and $I_\mathcal{P}$ are isomorphic.
\end{prop}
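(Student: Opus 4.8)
The plan is to read off the shape of $G$ from the hypotheses, write down the corresponding convex polyomino, and then check that the two defining ideals literally coincide once the variables are matched. First I would determine $G$. By Theorem~\ref{N2entire} the hypotheses say precisely that $(\overline{G})_1$ is a tree of diameter at most $3$; being bipartite, it is either edgeless or a \emph{double star}, so there are vertices $x_1\in X$ and $y_1\in Y$ such that every edge of $\overline{G}$ is incident to $x_1$ or to $y_1$ (stars and single edges included as degenerate double stars). After relabelling inside $X$ and inside $Y$ we may assume $\{x_1,y_j\}\in E(\overline{G})$ exactly for $1\le j\le s$ and $\{x_i,y_1\}\in E(\overline{G})$ exactly for $1\le i\le t$, where $s=t=0$ if $\overline{G}$ is edgeless and $s,t\ge 1$ otherwise. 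Identifying the edge $\{x_i,y_j\}$ of $K_{m,n}$ with the lattice point $(i,j)$ of the interval $[(1,1),(m,n)]$, this reads
\[E(G)=[(1,1),(m,n)]\setminus B,\qquad B=\{(1,j):1\le j\le s\}\cup\{(i,1):1\le i\le t\},\]
so $B$ is an $L$-shaped hook of points deleted from the lower-left corner. Finally $\delta(G)\ge 2$ forces $\deg_G(x_1)=n-s\ge 2$ and $\deg_G(y_1)=m-t\ge 2$, i.e.\ $s\le n-2$ and $t\le m-2$; in particular $m,n\ge 3$ whenever $B\ne\emptyset$.

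Next I would take $\mathcal{P}$ to be the set of cells of the rectangle $[(1,1),(m,n)]$ no vertex of which lies in $B$ (equivalently, the full rectangular polyomino with the $s$ lowest cells of its first column and the $t$ leftmost cells of its first row removed), and verify three routine facts: that $\mathcal{P}$ is connected, hence a polyomino; that $\mathcal{P}$ is convex, the bounds $s\le n-2$ and $t\le m-2$ forcing each row and each column of cells of $\mathcal{P}$ to be a nonempty interval of cells; and that $V(\mathcal{P})=[(1,1),(m,n)]\setminus B=E(G)$, since no point of $B$ is a vertex of any cell of $\mathcal{P}$, while (using $s\le n-2$, $t\le m-2$ and $m,n\ge 3$) every lattice point outside $B$ is a vertex of some cell of $\mathcal{P}$. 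When $B=\emptyset$, $\mathcal{P}$ is simply the full rectangle and all three are immediate.

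I would then match variables by $x_{ij}\leftrightarrow e_{ij}$ for $(i,j)\in V(\mathcal{P})=E(G)$ and observe that $I_{\mathcal{P}}$ and $I_G$ have the same binomial generators. If $i<k$, $j<l$ and the four corners $(i,j),(i,l),(k,j),(k,l)$ all lie in $E(G)=[(1,1),(m,n)]\setminus B$, then, since $B$ is an $L$-shape, the entire rectangle $[(i,j),(k,l)]$ avoids $B$ and hence lies in $V(\mathcal{P})$; thus the $2\times 2$ minor $x_{ij}x_{kl}-x_{il}x_{kj}$ is a generator of $I_{\mathcal{P}}$ exactly when $x_i,y_j,x_k,y_l$ is a $4$-cycle of $G$, in which case it equals the quadric $e_{ij}e_{kl}-e_{il}e_{kj}$. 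By Theorem~\ref{OHthm} the $4$-cycle quadrics generate $I_G$, and by definition the sub-rectangle minors generate $I_{\mathcal{P}}$, so $I_G=I_{\mathcal{P}}$.

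The step I expect to be the main obstacle is the combinatorial bookkeeping in the second paragraph: confirming that deleting an $L$-notch with arm lengths at most $n-2$ and $m-2$ really produces a convex polyomino whose vertex set is exactly $E(G)$. Everything else is essentially formal — the shape of $G$ is handed to us by Theorem~\ref{N2entire} together with the classification of bipartite trees of diameter at most $3$, and the equality of ideals follows from Theorem~\ref{OHthm} and the definition of the polyomino ideal (the general fact that a rectangle with all four corners in a convex polyomino lies inside it being trivial in this instance because $B$ is an $L$-shape).
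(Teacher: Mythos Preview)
Your proposal is correct and follows essentially the same strategy as the paper's proof: identify the double-star structure of $\overline{G}$, relabel so that the missing edges form an $L$-shaped notch in one corner of the $m\times n$ grid, and take $\mathcal{P}$ to be the corresponding rectangular polyomino with that notch removed. The only cosmetic difference is that the paper places the notch at the $(m,n)$ corner (centering the double star at $x_m,y_n$) while you place it at $(1,1)$; otherwise the constructions are identical. Your write-up is in fact more complete than the paper's, which simply asserts ``It follows that $I_G=I_{\mathcal{P}}$'' after describing the shape, whereas you explicitly verify convexity, the equality $V(\mathcal{P})=E(G)$, and the matching of generators via the $L$-shape argument.
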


\begin{proof} Let $\{x_1,\ldots,x_m\} \sqcup \{y_1,\ldots,y_n\}$ be the vertex set of $G$.  Since $\bar{G}$ is essentially a tree of diameter at most $3$, if $G$ is not a complete bipartite graph, then $\bar{G}$ has an edge adjacent to every other edge.  By relabeling, we may assume this edge is $\{x_m,y_n\}$ and every other edge is of the form $\{x_m,y_j\}$ for some $1 \le j < n$ or $\{x_i,y_n\}$ for some $1 \le i < m$.  Again by relabeling vertices, we may assume that $\{x_i,y_n\} \in E$ if and only if $i \le m$ and $\{x_m,y_j\} \in E$ if and only if $j \le n'$ for some integers $m'$ and $n'$ with $1\le m' < m$ and $1 \le n' < n$.  It follows that $I_G = I_\mathcal{P}$ where $\mathcal{P}$ is the following polyomino:

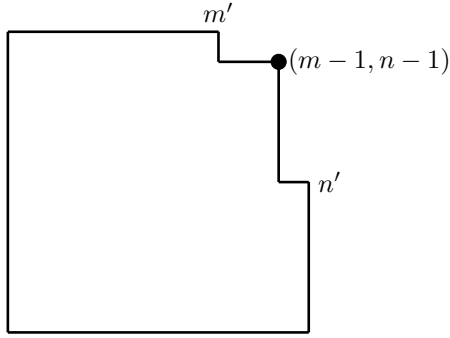
\begin{figure}[H]
\centering

\begin{tikzpicture}
\def\size{2cm}
\def\hor{\size}
\def\v{-1*\size}

\draw [line width=0.35mm] (0*\hor, 0) -- (1.4*\hor, 0);
\draw [line width=0.35mm] (1.4*\hor, .2*\v) -- (1.4*\hor, 0);
\draw [line width=0.35mm] (1.4*\hor, .2*\v) -- (1.8*\hor, .2*\v);


\draw [line width=0.35mm] (0, 0*\v) -- (0, 2*\v); 

\draw [line width=0.35mm] (0*\hor, 2*\v) -- (2*\hor, 2*\v);
\draw [line width=0.35mm] (2*\hor, 2*\v) -- (2*\hor, 1*\v);
\draw [line width=0.35mm] (1.8*\hor, .2*\v) -- (1.8*\hor, 1*\v);
\draw [line width=0.35mm] (2*\hor, 1*\v) -- (1.8*\hor, 1*\v);

        \draw[fill=black] (1.8*\hor,.2*\v) circle (.1cm) node (v6) {} node[below = .1cm] {} node[right] {$(m-1,n-1)$};
 \draw[fill=black] (1.4*\hor,0*\v) circle (.001cm) node (v6) {} node[below = .1cm] {} node[above] {$m'$};
 \draw[fill=black] (2*\hor,1*\v) circle (.001cm) node (v6) {} node[below = .1cm] {} node[right] {$n'$};


\end{tikzpicture}
\caption{A linearly related polyomino.}\label{linearpoly}
\end{figure}

\end{proof}

Using the previous dictionary between polyomino ideals and toric edge ideals of bipartite graphs, we translate our main Theorem~\ref{main2} to characterize polyomino ideals satisfying the various Green-Lazarsfeld conditions.

\begin{thm} Let $\mathcal{P}$ be a convex polyomino and let $k$ be a field.  
\begin{enumerate}
\item $I_\mathcal{P}$ satisfies property $\mathbf{N}_1$.
\item $I_\mathcal{P}$ satisfies property $\mathbf{N}_2$ if and only if $\mathcal{P}$ is isomorphic to a polyomino all of whose missing cells are in the first row or first column (possibly after rotating $\mathcal{P}$. See Figure~\ref{linearpoly2}.)
\item $I_\mathcal{P}$ satisfies property $\mathbf{N}_{3}$ if and only if $\mathcal{P}$ is an interval unless $\mathrm{char}(k) = 3$ and $\mathcal{P}$ is an interval with width and length at least 4.
\item $I_\mathcal{P}$ satisfies property $\mathbf{N}_{p}$ for some/any $p \ge 4$ if and only if $\mathcal{P}$ is an interval of the form $[a, (2,n) + a]$ for some $a \in \N^2$.
\end{enumerate}

\end{thm}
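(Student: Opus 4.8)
The plan is to push everything through the dictionary of Ene, Herzog, and Hibi \cite[Proposition 2.3]{EHH} between convex polyominoes and toric edge ideals of bipartite graphs, and then read off Theorem~\ref{main2}. To a convex polyomino $\mathcal{P}$ one attaches the bipartite graph $G_{\mathcal{P}}$ whose two vertex classes are the maximal vertical and the maximal horizontal edge intervals of $\mathcal{P}$, two such intervals being joined by an edge exactly when they meet. As recorded in the remark preceding the theorem, $I_{\mathcal{P}}\cong I_{G_{\mathcal{P}}}$; moreover $G_{\mathcal{P}}$ is connected, has $\delta(G_{\mathcal{P}})\ge 2$, and every cycle of length $\ge 6$ in it has a chord. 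Hence $I_{\mathcal{P}}$ satisfies $\mathbf{N}_p$ if and only if $I_{G_{\mathcal{P}}}$ does, and the theorem reduces to matching each clause of Theorem~\ref{main2} against the claimed condition on $\mathcal{P}$. Clause (1) is then just the normality (Qureshi \cite{Q}) and quadratic generation (\cite[Proposition 2.3]{EHH}) of convex polyomino ideals.

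Clauses (3) and (4) I would handle as straightforward dictionary translations. A convex polyomino is an interval exactly when every horizontal edge interval meets every vertical one, i.e.\ exactly when $G_{\mathcal{P}}$ is complete bipartite; if so, $G_{\mathcal{P}}\cong K_{m,n}$ with $m$ and $n$ the numbers of vertical and horizontal edge intervals, $\mathcal{P}$ is the corresponding rectangle of cells, and $I_{\mathcal{P}}$ is the ideal of $2\times 2$ minors of a generic $m\times n$ matrix. Clause (3) is then Theorem~\ref{N3}: $\mathbf{N}_3$ holds iff $\mathcal{P}$ is an interval, with the single exception $\mathrm{char}(k)=3$ and $\min\{m,n\}\ge 5$, which is exactly the case of a rectangle with at least four cells in each direction. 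Clause (4) is Theorem~\ref{N4} together with \cite[Theorem 4.6]{OH1}: $\mathbf{N}_p$ for some (equivalently every) $p\ge 4$ holds iff $G_{\mathcal{P}}\cong K_{2,n}$, i.e.\ exactly for the intervals of the shape in the statement, and in that case $I_{\mathcal{P}}$ is resolved by an Eagon--Northcott complex, so it is linearly resolved and all $\mathbf{N}_p$ hold.

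Clause (2) is the real content. Here I would prove the combinatorial equivalence: for a convex polyomino $\mathcal{P}$, the graph $\overline{G_{\mathcal{P}}}$ is essentially a tree of diameter at most $3$ if and only if, after possibly rotating or reflecting $\mathcal{P}$, every missing cell of $\mathcal{P}$ lies in the first row or the first column. For the ``if'' direction I would compute $\overline{G_{\mathcal{P}}}$ explicitly from $\mathcal{P}$: the incidence pattern of the edge intervals of a convex polyomino is a staircase, and deleting cells only from the first row and first column merely shortens the lowest horizontal interval and the leftmost vertical interval, so every non-edge of $G_{\mathcal{P}}$ is incident to the vertex of the leftmost vertical interval or to that of the lowest horizontal interval; thus $(\overline{G_{\mathcal{P}}})_1$ is contained in a double star, hence a tree of diameter at most $3$, and Theorem~\ref{N2entire} gives $\mathbf{N}_2$. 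For the ``only if'' direction I would show that if some missing cell of $\mathcal{P}$ sits in an interior row and an interior column, then it together with its surrounding cells forces a non-edge of $G_{\mathcal{P}}$ avoiding both distinguished vertices, which produces inside $\overline{G_{\mathcal{P}}}$ either an induced subgraph on four vertices consisting of two disjoint edges (hence not essentially connected) or a cycle, contradicting Proposition~\ref{tree3}; alternatively one may invoke the proposition preceding this theorem, which puts any such $\mathcal{P}$ into the normal form of Figure~\ref{linearpoly}, once one checks that $\mathcal{P}\mapsto G_{\mathcal{P}}$ and that normalizing construction are mutually inverse up to isomorphism of polyominoes.

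The step I expect to be the main obstacle is precisely this last point: making the polyomino--graph dictionary sharp enough that ``missing cells confined to the first row and first column'' is recognized as the literal polyomino-side image of ``$\overline{G}$ essentially a tree of diameter at most $3$'', together with the bookkeeping that the two constructions are inverse up to the correct equivalences --- polyomino isomorphism (allowing rotations and reflections) on one side, graph isomorphism on the other. Once this is in place, clauses (1), (3), and (4) drop out of the normality and quadraticity of polyomino ideals and of Theorems~\ref{N3} and \ref{N4}, and clause (2) follows from Theorem~\ref{N2entire}.
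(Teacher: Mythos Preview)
Your approach is essentially the paper's own: the paper states this theorem immediately after establishing the polyomino--to--bipartite-graph dictionary (the remark and the proposition preceding the theorem) and offers no separate proof, treating the result as a direct translation of Theorem~\ref{main2}. You have in fact supplied more detail than the paper does, particularly in clause~(2), where the paper relies implicitly on the preceding proposition for one direction and leaves the converse tacit.
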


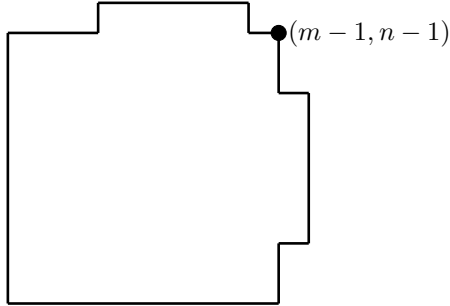
\begin{figure}[H]
\centering

\begin{tikzpicture}
\def\size{2cm}
\def\hor{\size}
\def\v{-1*\size}

\draw [line width=0.35mm] (.6*\hor, 0) -- (1.6*\hor, 0);
\draw [line width=0.35mm] (1.6*\hor, .2*\v) -- (1.6*\hor, 0);
\draw [line width=0.35mm] (1.6*\hor, .2*\v) -- (1.8*\hor,.2*\v);

\draw [line width=0.35mm] (0, .2*\v) -- (0, 2*\v); 

\draw [line width=0.35mm] (0*\hor, 2*\v) -- (1.8*\hor, 2*\v);
\draw [line width=0.35mm] (2*\hor, 1.6*\v) -- (2*\hor, .6*\v);
\draw [line width=0.35mm] (1.8*\hor, .2*\v) -- (1.8*\hor, .6*\v);
\draw [line width=0.35mm] (2*\hor, .6*\v) -- (1.8*\hor, .6*\v);
\draw [line width=0.35mm] (1.8*\hor, 1.6*\v) -- (2*\hor, 1.6*\v);
\draw [line width=0.35mm] (1.8*\hor, 1.6*\v) -- (1.8*\hor, 2*\v);

\draw [line width=0.35mm] (.6*\hor, .2*\v) -- (0*\hor, .2*\v);
\draw[line width=0.35mm] (.6*\hor,.2*\v) -- (.6*\hor,0);
        \draw[fill=black] (1.8*\hor,.2*\v) circle (.1cm) node (v6) {} node[below = .1cm] {} node[below = .1cm] {} node[right] {$(m-1,n-1)$};



\end{tikzpicture}
\caption{A general, linearly related, convex polyomino.}\label{linearpoly2}
\end{figure}

\section{Application to a Question of Constantinescu, Kahle, and Varbaro}\label{reg}

Our original motivation for studying linearly presented toric edge ideals comes from the following question of Constantinescu, Kahle, and Varbaro \cite{CKV}:

\begin{qst}[{\cite[Question 1.1]{CKV}}]\label{Qckv}
Is there a family of linearly presented, quadratically generated ideals $\{I_n\subseteq R = k[x_1, . . . , x_n]\}_{n \in \mathbb{N}}$ such that 
\[\lim_{n \to \infty} \frac{\reg(I_n)}{n} > 0?\]
\end{qst}

\noindent A similar question in the Koszul setting was posed by Conca \cite[Question 2.8]{C}.  Such a family of ideals would have regularity growing linearly with respect to the number of variables.  In \cite{CKV}, Constantinescu, Kahle, and Varbaro construct a family of squarefree, quadratic monomial ideals with linear syzygies for arbitrarily many steps (i.e. satisfying property $\mathbf{N}_{p}$ for arbitrary $p$ if we ignore the normality condition) and with arbitrarily large regularity, however these ideals have a very large number of variables.  A result of Dao, Huneke, and Schweig \cite{DHS} shows that the regularity of squarefree monomial ideals with linear syzygies is bounded logarithmically in terms of the number of variables; in particular, no such families of ideals yielding a positive answer to Question~\ref{Qckv} can be monomial. 

Note that if $\depth(R/I_n) > 0$, we can mod out by a general linear form, thereby reducing the number of variables while preserving the graded Betti numbers and regularity.  It follows from the Auslander-Buchsbaum formula that the following question is equivalent to Question~\ref{Qckv}

\begin{qst}\label{Q2ckv}
Is there a family of linearly presented, quadratically generated ideals $\{I_n\subseteq R = k[x_1, . . . , x_n]\}_{n \in \mathbb{N}}$ such that 
\[\lim_{n \to \infty} \pd(I_n) = \infty \quad \text{and} \quad \lim_{n \to \infty} \frac{\reg(I_n)}{\pd(I_n)} > 0?\]
\end{qst}

\noindent The restriction that the ideals is linearly presented rules out complete intersections of $n$ quadrics for with $\pd(S/I_n) = \reg(S/I_n) = n$.    In general, both questions are still open.  A corollary to our Theorem~\ref{N2entire}  is that no such families exist among toric edge ideals associated to bipartite graphs.

\begin{cor}
There are no families of graphs $G_n$, where $G_n$ is bipartite and $I_{G_n}$ satisfies property $\mathbf{N}_{2}$, that give a positive answer to Question~\ref{Q2ckv}.  In other words, if $\lim_{n \to \infty} \pd(I_{G_n}) = \infty$, then  $\lim_{n \to \infty} \frac{\reg(I_{G_n})}{\pd(I_{G_n})} = 0$.
\end{cor}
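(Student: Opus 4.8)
The plan is to feed the structural description of Theorem~\ref{N2entire} into the classical identity $\reg(S/I) = a(R) + \dim R$ for a Cohen--Macaulay graded quotient $R = S/I$, using that $k[G]$ is a normal Cohen--Macaulay affine semigroup ring when $G$ is bipartite.

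First I would reduce to the case $\delta(G_n) \ge 2$: by the Remark following Theorem~\ref{N2entire}, deleting vertices of degree $\le 1$ changes neither $\pd(I_{G_n})$ nor $\reg(I_{G_n})$ (nor the validity of $\mathbf{N}_2$), so assume $\delta(G_n) \ge 2$ and write $G = G_n$ with bipartition $X \sqcup Y$, $m := |X| \le |Y| =: N$. Since $I_G$ satisfies $\mathbf{N}_2$, Theorem~\ref{N2entire} says $\overline{G}$ is essentially a tree of diameter $\le 3$; the isolated vertices of $\overline{G}$ are exactly the vertices of $G$ adjacent to every vertex on the opposite side, so $(\overline{G})_1$ is a (possibly degenerate) double star and every edge of $\overline{G}$ is incident to one of at most two vertices $x^* \in X$, $y^* \in Y$. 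As $\deg_G(x^*), \deg_G(y^*) \ge 2$, this gives $|E(\overline{G})| \le \deg_{\overline{G}}(x^*) + \deg_{\overline{G}}(y^*) \le (N-2) + (m-2) = m + N - 4$, so $G$ is connected (it contains a complete bipartite induced subgraph on all but at most two of its vertices) and $|E(G)| \ge mN - (m + N - 4)$; moreover if $m = 2$ then $\delta(G) \ge 2$ forces $G = K_{2,N}$.

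Next come the two estimates. Because $k[G] = S/I_G$ is Cohen--Macaulay \cite[Corollary~5.26]{HHO} of dimension $m + N - 1$, the Auslander--Buchsbaum formula gives $\pd_S(I_G) = |E(G)| - (m+N)$, hence $\pd_S(I_G) \ge (m-2)(N-2)$ when $m \ge 3$ and $\pd_S(I_G) = N - 2$ when $m = 2$. For the regularity --- the part I expect to be the crux --- I would use that $I_G$ is quadratically generated, so $k[G]$ is normal \cite[Corollary~5.25]{HHO}; being a normal Cohen--Macaulay affine semigroup ring, it satisfies $\reg(S/I_G) = a(k[G]) + \dim k[G]$ with $a(k[G]) = -\mathrm{indeg}(\omega_{k[G]})$, and by the Danilov--Stanley formula $\omega_{k[G]}$ has a $k$-basis indexed by the lattice points in the relative interior of the cone $\sigma_G \subseteq \mathbb{R}^X \times \mathbb{R}^Y$ spanned by the incidence vectors of the edges of $G$. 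A lattice point $(u,v)$ in this relative interior is a strictly positive combination of all those incidence vectors, and since $\delta(G) \ge 2$ every vertex lies on an edge, so every $u_i$ and every $v_j$ is a positive integer; as the standard degree of the corresponding monomial equals $\sum_i u_i = \sum_j v_j$, it is at least $\max(m,N) = N$. Therefore $\mathrm{indeg}(\omega_{k[G]}) \ge N$, so $\reg(S/I_G) \le -N + (m + N - 1) = m - 1$, i.e.\ $\reg(I_G) \le m = \min(|X|,|Y|)$.

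Finally I would combine the bounds. If $m_n \ge 3$ then
\[ \frac{\reg(I_{G_n})}{\pd(I_{G_n})} \ \le\ \frac{m_n}{(m_n-2)(N_n-2)} \ \le\ \frac{3}{N_n-2}, \]
and if $m_n = 2$ the ratio equals $2/(N_n-2)$; in either case it is at most $3/(N_n - 2)$. Since $\pd(I_{G_n}) \le |E(G_n)| \le N_n^2$, the hypothesis $\pd(I_{G_n}) \to \infty$ forces $N_n \to \infty$, whence $\reg(I_{G_n})/\pd(I_{G_n}) \to 0$, which is the statement. The main obstacle, as indicated, is the regularity estimate: one must correctly invoke normality and Cohen--Macaulayness of $k[G]$ to replace $\reg$ by the $a$-invariant and then observe that $\delta(G) \ge 2$ pins the initial degree of the canonical module at $\ge \max(|X|,|Y|)$; a secondary nuisance is the bookkeeping for the degenerate shapes of $\overline{G}$ (star, single edge, empty) and the small-$m$ case.
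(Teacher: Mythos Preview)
Your proof is correct and follows the same overall architecture as the paper's: reduce to $\delta(G)\ge 2$, use Theorem~\ref{N2entire} to see that $\overline{G}$ is a double star, bound $|E(\overline{G})|$ and hence $\pd(I_G)$ from below via Cohen--Macaulayness and Auslander--Buchsbaum, bound $\reg(I_G)$ from above by $\min(|X|,|Y|)$, and take the ratio.

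The one substantive difference is the regularity bound. The paper simply cites \cite[Theorem~4.9]{BOV} to obtain $\reg(S/I_G)\le r=\min(|X|,|Y|)$. You instead give a self-contained argument: since $k[G]$ is a normal Cohen--Macaulay affine semigroup ring, $\reg(S/I_G)=a(k[G])+\dim k[G]$, and by Danilov--Stanley any lattice point in the relative interior of the edge cone has all coordinates positive, forcing its $S$-degree to be at least $\max(m,N)$. This is a clean and slightly sharper alternative (you get $\reg(S/I_G)\le m-1$ rather than $m$), and it makes the corollary independent of \cite{BOV}. Your edge-count estimate $|E(\overline{G})|\le m+N-4$ is marginally weaker than the paper's $r+s-5$, but this is irrelevant for the asymptotics. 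Otherwise the two arguments are interchangeable.
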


\begin{proof} Fix a bipartite graph $G = (X \sqcup Y, E)$ such that $I_G$ is linearly presented and $\delta(G) \ge 2$.  Set $r = |X|$ and $s = |Y|$ and without loss of generality assume $2 \le r \le s$.  Since $I_G$ is Cohen-Macaulay, $\pd(S/I_G) = \mathrm{ht}(I_G).$  When $G$ is a complete $(r,s)$-bipartite graph, it is well-known that $\mathrm{ht}(I_G) = (r-1)(s-1)$.   If $G$ is an arbitrary bipartite graph such that $I_G$ is linearly presented, it follows from Theorem~\ref{N2entire} that $\bar{G}$ is a tree of diameter at most $3$.  Thus there are at most $(r-3) + (s-3) + 1$ edges missing from the complete bipartite graph and so $|E| \ge rs -r -s + 5$.  It follows that $\mathrm{ht}(I_G) \ge (r-1)(s-1) - r - s + 5 = (r-2)(s-1) - r + 4$.  By \cite[Theorem 4.9]{BOV}, $\reg(S/I_G) \le r$.  If the above limit is nonzero, we must have $\lim_{n \to \infty} r = \infty$.  Since $r \le s$, we get
\[\frac{\reg(I_{G_n})}{\pd(I_{G_n})} \le  \frac{r}{(r-2)(s-1) - r + 4} =  \frac{1}{\frac{r-2}{r}(s-1) - 1 + \frac{4}{r}} \to 0\]
as $n \to \infty$.
\end{proof}

It is worth noting that even though quadratic toric edge ideals of bipartite graphs are generated by quadratic Gr\"obner bases, this does not reduce the problem of answering the above question to the monomial case.  Indeed there are linearly presented, quadratic toric edge ideals whose lead term (monomial) ideals are not linearly presented.  For a simple example, let $G$ be $K_{4,3}$ with one edge removed.  By Theorem~\ref{N2entire}, $I_G$ is linearly presented.  One checks however that $LT(I_G)$ is quadratic but not linearly presented.\\

\noindent \textbf{Acknowledgements}\\
The second author was partially supported by a grant from the Simons Foundation (576107, JGM) and NSF grant DMS--1900792.

\end{document}